\newtheorem{theorem}{Theorem}[section]
\newtheorem{lemma}[theorem]{Lemma}
\newtheorem{prop}[theorem]{Proposition}
\newtheorem{cor}[theorem]{Corollary}
\newtheorem{question}[theorem]{Question}
\theoremstyle{remark}
\newtheorem{remark}[theorem]{Remark}
\newtheorem{example}[theorem]{Example}
\theoremstyle{definition}
\renewcommand\P{\mathbb{P}}
\newcommand\A{\mathbb{A}}
\newcommand\Z{\mathbb{Z}}
\newcommand\F{\mathbb{F}}
\newcommand\HH{\mathbb{H}}
\newcommand\Q{\mathbb{Q}}
\newcommand\R{\mathbb{R}}
\newcommand\ksep{k^{\text{sep}}}
\newcommand\C{\mathcal{C}}
\newcommand\pC{\overline{\mathcal{C}}}
\newcommand\E{\mathcal{E}}
\renewcommand\O{\mathcal{O}}
\newcommand\ovp{\overline{p}}
\newcommand\ovq{\overline{q}}
\newcommand\ovr{\overline{r}}
\newcommand\kbar{\overline{k}}
\newcommand\fa{\mathfrak{a}}
\newcommand\fb{\mathfrak{b}}
\newcommand\fc{\mathfrak{c}}
\newcommand\m{\mathfrak{m}}
\newcommand\Spec{\mathop{\rm Spec} \nolimits}
\newcommand\Pic{\mathop{\rm Pic} \nolimits}
\newcommand\Div{\mathop{\rm Div} \nolimits}
\newcommand\Ig{\mathop{\rm Ig} \nolimits}
\newcommand\ns{{\rm ns}}
\newcommand\ord{{\rm ord}}
\newcommand\id{{\rm id}}
\newcommand\isom{\cong}
\newenvironment{computations}
{\begin{changemargin}{10mm}{5mm}}
{\end{changemargin}}
\title{Density of rational points on del Pezzo surfaces of degree one}
\author{Cec\'ilia Salgado}
\address{Instituto de Matem\'atica, Universidade Federal do Rio de Janeiro, Ilha do Fund\~{a}o, 
21941-909 Rio de Janeiro, Brasil}
\email{salgado@im.ufrj.br}
\urladdr{www.im.ufrj.br/~salgado}
\author{Ronald van Luijk}
\address{Mathematisch Instituut, Universiteit Leiden, Postbus 9512, 2300 RA, Leiden, Netherlands}
\address{Section de math\'ematiques EPFL, FSB-SMA, Station 8 - B\^atiment MA, CH-1015 Lausanne}
\email{rvl@math.leidenuniv.nl}
\urladdr{http://www.math.leidenuniv.nl/\~{}rvl/}
\begin{document}

\begin{abstract}
We state conditions under which the set $S(k)$ of $k$-rational points on a del Pezzo surface $S$ of degree $1$ over an infinite field $k$ of characteristic not equal to $2$ or $3$ is Zariski dense. For example, it suffices to require that the elliptic fibration $S\dashrightarrow \P^1$ induced by the anticanonical map has a nodal fiber over a $k$-rational point of $\P^1$. 
It also suffices to require the existence of a point in $S(k)$ that does not lie on six exceptional curves of $S$ and that has order $3$ on its fiber of the elliptic fibration.  This allows us to show that within a parameter space for del Pezzo surfaces of degree $1$ over $\R$, the set of surfaces $S$ defined over $\Q$ for which the set $S(\Q)$ is Zariski dense, is dense with respect to the real analytic topology. We also include conditions that may be satisfied for every del Pezzo surface $S$ and that can be verified with a finite computation for any del Pezzo surface $S$ that does satisfy them. 
\end{abstract}
\maketitle

\section{Introduction}\label{intro}

A del Pezzo surface 
over a field $k$ is a smooth, projective, geometrically integral surface $S$ over $k$ with ample anticanonical divisor $-K_S$; the degree of $S$ is defined to be the self-intersection number $d = K_S^2 \geq 1$. A del Pezzo surface is minimal if and only if there is no birational morphism over its ground field to a del Pezzo surface of higher degree.  
Every del Pezzo surface of degree $d$ is geometrically isomorphic to $\P^2$ blown up at $9-d$ points in general position, or to $\P^1 \times \P^1$ if $d=8$. Conversely, every smooth, projective surface that is geometrically birationally equivalent to $\P^2$ is birationally equivalent over the ground field to a del Pezzo surface or a conic bundle (see \cite{iskovskikh}).

A surface $S$ over a field $k$ is unirational if there is a dominant rational map $\P^2 \dashrightarrow S$ over $k$. Segre and Manin  proved that every del Pezzo surface $S$ of degree $d\geq 2$ over a field $k$ with a $k$-rational point is unirational, at least if one assumes that the point 
is in general position in the case $d=2$. For references, see \cite{segreone, segretwo} for $d=3$ and $k=\Q$, see \cite[Theorem 29.4 and 30.1]{manicub} for $d=2$ and $d\geq 5$, as well as $d=3,4$ under the assumption that $k$ is large enough, and see \cite[Theorem 1.1]{kollar} and \cite[Proposition 5.19]{pieropan} for $d=3$ and $d=4$ in general.  
On the other hand, even though del Pezzo surfaces of degree $1$ always have a rational point, 
we do not know whether any minimal del Pezzo surface of degree $1$ that is not birationally equivalent to a conic bundle is unirational over its ground field. 
If $k$ is infinite, then unirationality of $S$ implies that the set $S(k)$ of $k$-rational points on $S$ is Zariski dense. 
The following question asks whether this weaker property may hold for all del Pezzo surfaces of degree $1$. 

\begin{question}\label{Q:alwaysdense}
If $S$ is a del Pezzo surface of degree $1$ over an infinite field $k$, is the set $S(k)$ of $k$-rational points Zariski dense in $S$?
\end{question}

Over number fields, a positive answer to this question is implied by the conjecture by Colliot-Th\'el\`ene and Sansuc that the Brauer--Manin obstruction to weak approximation is the only one for geometrically rational varieties \cite[Conjecture d), p.~319]{CT1}. This conjecture may in fact hold more generally for geometrically rationally connected varieties over global fields (see \cite[p.~3]{CT2} for number fields).

The primary goal of this paper is to state conditions under which the answer to Question \ref{Q:alwaysdense} is positive. 

Let $k$ be a field of characteristic not equal to $2$ or $3$, and $S$ a del Pezzo surface of degree $1$ over $k$ with a canonical divisor $K_S$. 
Then the linear system $|-3K_S|$ induces an embedding of $S$ in the weighted projective space $\P(2,3,1,1)$ with coordinates $x,y,z,w$. More precisely, there are homogeneous polynomials $f,g \in k[z,w]$ of degrees $4$ and $6$, respectively, such that $S$ is isomorphic to the smooth sextic in $\P(2,3,1,1)$ given by 
\begin{equation}\label{maineq}
y^2 = x^3 + f(z,w)x + g(z,w).
\end{equation}
For some special families of del Pezzo surfaces of degree $1$ it is known that the set of rational points is Zariski dense. Examples that are minimal and have no conic bundle structure include those given by A.~V\'arilly-Alvarado. He proves in \cite[Theorem 2.1]{varilly}
that if we have $k=\Q$, while $f$ is zero and $g$ satisfies some technical conditions, 
then the set of $\Q$-rational points on the surface $S$ given by \eqref{maineq} is Zariski dense if one also assumes that Tate--Shafarevich groups of elliptic curves over $\Q$ with $j$-invariant $0$ are finite (cf.~Example \ref{ex:tony}). These technical conditions are satisfied if $g = az^6+bw^6$ for nonzero integers $a,b \in \Z$ with $3ab$ not a square, or with $a$ and $b$ relatively prime and $9\nmid ab$ \cite[Theorem 1.1]{varilly}. 

M.~Ulas \cite{ulasone, ulastwo}, as well as M.~Ulas and A.~Togb\'e \cite[Theorem 2.1]{ulasthree}, also give various conditions on the homogeneous polynomials $f,g \in \Q[z,w]$ for the set of rational points on the surface $S \subset \P(2,3,1,1)$ over $\Q$ given by \eqref{maineq} to be Zariski dense. Besides hypotheses that imply that $S$ is not smooth or not minimal, all their conditions imply that either (i) $f=0$ and $g(t,1)$ is monic, or (ii) $g(t,1)$ has degree at most $4$, or (iii) $f=0$ and $g$ vanishes on a rational point of $\P^1$. 
E.\ Jabara generalizes Ulas' work on case (iii) in \cite[Theorems C and D]{jabara} and treats the case over $\Q$ with $g(t,1)$ monic and the pair $(f,g)$ sufficiently general. 

The techniques in this paper are a generalization of a geometric interpretation of Ulas' work on case (iii); they are independent of the work of Jabara (see Remark \ref{genulas}). 
The projection $\varphi \colon \P(2,3,1,1) \dashrightarrow \P^1$ onto the last two coordinates is a morphism on the complement $U$ of the line given by $z=w=0$ in $\P(2,3,1,1)$. 
For any point $Q \in S(k)$ not equal to $\O = (1:1:0:0)$, we let $\C_Q(5)$ denote the family of sections of $U \to \P^1$ that meet $S$ at $Q$ with multiplicity at least $5$; we will see that $\C_Q(5)$ has the structure of an affine curve, the components of which have genus at most $1$ (see paragraph containing \eqref{CQ5}). 

The restriction $\varphi|_S \colon S \dashrightarrow \P^1$ corresponds to the linear system $|-K_S|$ and has a unique base point $\O \in S$. This map $\varphi|_S$ induces an elliptic fibration $\pi \colon \E \to \P^1$ of the blow-up $\E$ of $S$ at $\O$. 
The exceptional curve on $\E$ above $\O$ is a section, also denoted by $\O$. 
For any $t = (z_0:w_0) \in \P^1$, the fiber $\E_t$ is isomorphic to the intersection of $S$ with the plane $H_t$ given by $w_0z = z_0w$; 
the set $\E_t^\ns(k)$ of smooth $k$-points on $\E_t$ naturally carries a group structure characterized by
the property that three points in $H_t \cap S$ sum to the identity $\O$ if and only if they are collinear. 
Our first main result is the following.

\begin{theorem}\label{maincor}
Let $k$ be an infinite field of characteristic not equal to $2$ or $3$. Let $S \subset \P(2,3,1,1)$ be a del Pezzo surface given by \eqref{maineq} with $f,g \in k[z,w]$, and $\pi \colon \E \to \P^1$ the elliptic fibration induced by the anticanonical map $S \dashrightarrow \P^1$. Let $Q \in S(k)$ be a point that is not fixed by the automorphism of~$S$ that changes the sign of $y$. Let $\C_Q(5)$ be the curve of those sections of the projection $U\to \P^1$ that meet $S$ at the point $Q$ with multiplicity at least $5$. Set $t = \pi(Q)$. Suppose that the following statements hold.
\begin{itemize}
\item The order of $Q$ in $\E_t^\ns(k)$ is at least $3$. 
\item If the order of $Q$ in $\E_t^\ns(k)$ is at least $4$, then $\C_Q(5)$ has infinitely many $k$-points. 
\item If the characteristic of $k$ equals $5$, then the order of $Q$ in $\E_t^\ns(k)$ is not $5$.
\item If the order of $Q$ in $\E_t^\ns(k)$ is $3$ or $5$, then $Q$ does not lie on six $(-1)$-curves of $S$. 
\end{itemize}
Then the set $S(k)$ of $k$-points on $S$ is Zariski dense in $S$. 
\end{theorem}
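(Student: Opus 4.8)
The plan is to deduce Zariski density from the existence of a single non-torsion section of $\pi$, and to manufacture such a section from $Q$ using the osculating family $\C_Q(5)$.

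I would first isolate the reduction, which is the only step that must survive over an arbitrary infinite field. Blowing up $S$ at $\O$ is birational, so it is equivalent to show that $\E(k)$ is dense in $\E$. Suppose we have produced a section $P$ of $\pi$ of infinite order, i.e.\ a non-torsion element of the Mordell--Weil group of the generic fibre. Then the multiples $nP$, $n \in \Z$, are pairwise distinct sections, each isomorphic to $\P^1$ over $k$ and hence carrying infinitely many $k$-points; infinitely many distinct horizontal curves cannot lie in a proper closed subset of the irreducible surface $\E$, so their union is Zariski dense, and density of $S(k)$ follows by birationality. The merit of this formulation is that it uses nothing beyond $k$ being infinite: in particular it sidesteps any specialisation theorem for Mordell--Weil ranks, which is unavailable over a general $k$.

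Next I would construct the section from $Q$. Each $s \in \C_Q(5)$ is a curve in $U$ meeting the sextic $S$ with total multiplicity $6$; as the multiplicity at $Q$ is at least $5$, exactly one residual intersection point $R(s)$ remains, and $s \mapsto R(s)$ defines a morphism $\rho \colon \C_Q(5) \to S$, lifting to $\E$. When the order of $Q$ is at least $4$, the hypothesis that $\C_Q(5)$ has infinitely many $k$-points provides infinitely many $k$-rational sections $s$, hence infinitely many residual points $R(s) \in S(k)$; when the order is exactly $3$ I expect the relevant component of $\C_Q(5)$ to be rational and to carry the $k$-point coming from the inflexional tangent to $\E_t$ at $Q$, so that it again has infinitely many $k$-points and no separate hypothesis is needed. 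I would then analyse the multisection $C = \overline{\rho(\C_Q(5))}$ and its degree over $\P^1$, with the aim of extracting from it a genuine section $P$ of $\pi$ defined over $k$ --- either because $\pi|_C$ is already birational onto $\P^1$, or by combining the residual points fibrewise through the group law on $\E_t^\ns$; the osculation condition should force $P$ to be essentially a fixed multiple of $Q$ transported across the fibres, which is what ties it to the order of $Q$. Checking that $\pi \circ \rho$ is non-constant, so that $C$ is genuinely horizontal and $P$ is not a torsion section for the trivial reason of staying on one fibre, is a preliminary point I would verify using that the generic multiplicity at $Q$ is exactly $5$.

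The crux, and the step I expect to be the main obstacle, is to prove that the section $P$ so produced has infinite order, for this is where all four hypotheses enter and where the difficulty over a general field is concentrated. I would argue that if $P$ were torsion, then a local computation in the formal group of the Weierstrass fibration at $Q$ --- degenerating $s$ to the special sections that meet $S$ at $Q$ with multiplicity $6$, so that $R(s)$ collides with $Q$ --- would translate the osculation data into the statement that $Q$ has order dividing a small explicit bound, or else that $Q$ lies on a distinguished set of six $(-1)$-curves recording that torsion configuration. The hypotheses are exactly tailored to exclude this: the order of $Q$ is at least $3$, it is not $5$ in characteristic $5$ (where multiplication by $5$ degenerates), and in the borderline orders $3$ and $5$ the point $Q$ avoids the six $(-1)$-curves. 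Carrying out this translation rigorously --- relating the order of osculation of a section at $Q$ to the order of $Q$ in $\E_t^\ns(k)$, and matching the forbidden torsion with the classification of exceptional curves on $S$ --- is the heart of the argument; granting it, $P$ is a non-torsion section and the reduction above yields the Zariski density of $S(k)$.
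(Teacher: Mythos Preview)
Your overall shape is right---build a horizontal object from the residual-point map and show it gives something of infinite order---but two structural choices diverge from the paper and one of them is a genuine gap.

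First, you try to extract an honest section $P$ of $\pi$ over $k$ from the multisection $C=\overline{\rho(\C_Q(5))}$. The paper does not do this and there is no reason it should be possible: $\pi|_C$ need not be birational, and ``combining residual points fibrewise through the group law'' does not in general descend to~$k$. Instead the paper takes the normalisation $\tilde\C_0$ of a suitable component of $\pC_Q(5)$ (genus at most~$1$), base-changes $\pi$ along $\tilde\C_0\to\P^1$, and observes that the tautological section $\theta=\id\times\tilde\sigma$ of $\tilde\C_0\times_{\P^1}\E\to\tilde\C_0$ is defined over $k$ and nonzero. Density then comes from the multiples of $\theta$ together with the infinitely many $k$-points on $\tilde\C_0$, exactly as in your first paragraph but one storey up.

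Second, and more importantly, you have mislocated where the four hypotheses are spent. They are \emph{not} used to prove non-torsion of the section. In the paper they are consumed entirely in guaranteeing that $\sigma(\C_Q(5))$ has a horizontal component (Propositions~\ref{vertcomp}, \ref{outofhell5Q}, \ref{nohell4Q}, \ref{nohell3Q} and Corollary~\ref{alwayshorizontal}): the examples in Section~\ref{examples} show that without the six-$(-1)$-curve condition in orders $3$ and $5$ the image can collapse to a fibre, and in characteristic $5$ with $5Q=\O$ there are further bad cases. Your proposed check of horizontality ``using that the generic multiplicity at $Q$ is exactly $5$'' does not suffice; this is a substantial case analysis. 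Once horizontality is secured, non-torsion of $\theta$ is a separate, general theorem (Theorem~\ref{ellfibnotors}): any base change of $\pi$ by a smooth connected curve of genus at most $1$ has no nonzero torsion section. The proof there is not a formal-group/local computation but a global argument comparing singular fibres against those of $\upsilon(n)\colon\mathbb E(n)\to X_1(n)$ (or the Igusa analogue in characteristic $p$) via Riemann--Hurwitz. Your formal-group sketch for non-torsion is too vague to evaluate, and in any case is aimed at the wrong target.
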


Note that all four assumptions of Theorem \ref{maincor} are hypotheses on the point $Q$.
Given $S$, we provide an explicit zero-dimensional scheme of which the points correspond to the $(-1)$-curves of~$S$ going through $Q$ (cf.~Remark \ref{minonecurves}), so the first and the last two conditions of Theorem \ref{maincor} are easy to check.
If the set $S(k)$ is indeed Zariski dense in $S$, then 
the subset of those points $Q \in S(k)$ that satisfy these three 
conditions is also dense; Theorem \ref{maincor} provides a proof 
of Zariski density of $S(k)$ as soon as $\C_Q(5)(k)$ is infinite for one of these points $Q$. 
If the answer to Question \ref{Q:alwaysdense} is positive, 
then it may be true that for {\em every} del Pezzo surface $S$ of degree $1$, 
there exists such a point. Theorem \ref{maincor} is the first result that 
states sufficient conditions for the set of rational points on an arbitrary del Pezzo surface of degree $1$ to be Zariski dense. 

Moreover, if $k$ is an infinite field that is finitely generated over its ground field, then $\C_Q(5)(k)$ is infinite if and only if the curve $\C_Q(5)$ has a component that is birationally equivalent to $\P^1$ or a component of genus $1$ whose Jacobian has a point of infinite order. 
The fact that the order of a point on an elliptic curve over such a field $k$ is infinite is effectively verifiable by applying the Theorem of Nagell-Lutz to sufficiently many multiples of the point. This means that for such fields~$k$, independent of Question \ref{Q:alwaysdense}, if $S(k)$ contains a point~$Q$ satisfying the conditions of Theorem \ref{maincor}, then we can find such a point, thus reducing the verification of Zariski density of $S(k)$ to a finite computation. 

Note that if the order of $Q$ in $\E_0^\ns(k)$ is $3$ and $Q$ does not lie on six $(-1)$-curves of $S$, then the assumptions in Theorem \ref{maincor} are automatically satisfied without any further condition on $\C_Q(5)$. 
Besides verifying Zariski density of rational points on explicit surfaces, Theorem \ref{maincor} also implies the following two results.
Note that both show that our criterion is strong enough to prove Zariski density of the set of rational points
on a set of del Pezzo surfaces of degree $1$ over $\Q$ that is dense in the real analytic topology on the moduli 
space of such surfaces.

\begin{theorem}\label{denseinmodulispace}
Let $f_0, \ldots, f_4, g_0, \ldots, g_6 \in \Q$ be such that the surface $S \in \P(2,3,1,1)$ given by 
\begin{equation}\label{tobechanged}
y^2 = x^3 + \left( \sum_{i=0}^4 f_i z^iw^{4-i} \right) x + \sum_{j=0}^6 g_j z^jw^{6-j} = 0 
\end{equation}
is smooth. Then for each $\ell \in \{0,\ldots,4\}$, $m \in \{0,\ldots, 6\}$, and $\varepsilon >0$, there exist 
$\lambda,\mu \in \Q$ with $|\lambda-f_\ell|<\varepsilon$ and $|\mu - g_m|<\varepsilon$ such that the surface $S' \in \P(2,3,1,1)$ 
given by \eqref{tobechanged} with the two values $f_\ell$ and $g_m$ replaced by $\lambda$ and $\mu$, respectively, is smooth and the set $S'(\Q)$ is Zariski dense in $S'$.
\end{theorem}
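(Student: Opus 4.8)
The plan is to reduce Zariski density of $S'(\Q)$ to producing a single rational point $Q$ of order $3$ on one smooth fiber: by the remark following Theorem \ref{maincor}, once $Q$ has order $3$ and does not lie on six $(-1)$-curves, the hypotheses of that theorem are automatically met and $S'(\Q)$ is dense. First I would fix a rational $z_0 \in \Q^*$ for which the fiber of $\pi$ over $(z_0:1)$ is smooth; only finitely many fibers are singular, so such a $z_0$ exists. Writing $F_0 = f(z_0,1)$ and $G_0 = g(z_0,1)$, the replacement $f_\ell \mapsto \lambda$, $g_m \mapsto \mu$ turns the affine Weierstrass model of this fiber into $y^2 = x^3 + Fx + G$ with $F = F_0 + (\lambda - f_\ell)z_0^\ell$ and $G = G_0 + (\mu - g_m)z_0^m$. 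As $z_0 \neq 0$, the coefficients $F,G$ can be prescribed independently, and $|\lambda - f_\ell|,\,|\mu - g_m|$ are small exactly when $|F - F_0|,\,|G-G_0|$ are. It therefore suffices to find rational pairs $(F,G)$ arbitrarily close to $(F_0,G_0)$ for which $y^2 = x^3 + Fx + G$ carries a rational point of order $3$.

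For this I would parametrize order-$3$ points. A point $(x_0,y_0)$ with $y_0 \neq 0$ has order $3$ exactly when it is a flex, i.e. $\psi_3(x_0) = 3x_0^4 + 6F x_0^2 + 12 G x_0 - F^2 = 0$. Solving this together with $y_0^2 = x_0^3 + Fx_0 + G$ for $(F,G)$ shows that rationality of $F$ forces $3x_0$ to be a square; putting $x_0 = 3s^2$ yields the explicit formulas $F = -27s^4 + 6sy_0$ and $G = y_0^2 + 54s^6 - 18s^3 y_0$ (one sign branch), defining a polynomial map $\phi\colon (s,y_0)\mapsto (F,G)$. Its Jacobian determinant equals $12y_0^2$, so $\phi$ is a local diffeomorphism, hence open, on $W = \{y_0 \neq 0\}$. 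Fixing $s$, the image is the parabola $G = (F-27s^4)^2/(36s^2) - 27s^6$, and for a fixed $F^* \neq 0$ the right-hand side runs continuously from $+\infty$ to $-\infty$ as $s$ ranges over $(0,\infty)$; by the intermediate value theorem $\phi(W)$ contains every $(F^*,G^*)$ with $F^*\neq 0$ outside a measure-zero set. Thus $\phi(W)$ is open and dense in $\R^2$, and since the rational points of $W$ are dense in $W$ and $\phi$ is continuous and open, their images are dense in $\phi(W)$, hence in $\R^2$. In particular there are rational $(s,y_0)\in W$ with $\phi(s,y_0) = (F,G)$ satisfying $|F-F_0| < \varepsilon|z_0|^\ell$ and $|G-G_0| < \varepsilon|z_0|^m$, which produce admissible $\lambda,\mu$ and a rational order-$3$ point $Q = (3s^2, y_0)$ on the fiber.

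It remains to check the hypotheses of Theorem \ref{maincor} for the resulting $S'$ and $Q$. Since $y_0 \neq 0$, the point $Q$ is not fixed by $y \mapsto -y$ and has order exactly $3$, so the second and third bullets are vacuous in characteristic $0$. Smoothness of $S'$ and of the chosen fiber are open conditions satisfied at the original surface $S$, hence preserved once the allowed distance is shrunk; I would simply impose this extra smallness. The genuinely delicate point is the last bullet: $Q$ must avoid six $(-1)$-curves of $S'$. Via the explicit zero-dimensional scheme of Remark \ref{minonecurves}, this is a closed condition on $(s,y_0)$, and I expect it to be \emph{proper}, failing only on a lower-dimensional subset; granting this, the dense set of admissible rational $(s,y_0)$ approximating $(F_0,G_0)$ cannot lie entirely in the bad locus, so a valid choice exists and Theorem \ref{maincor} applies. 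Showing that this "six $(-1)$-curves" locus is proper, rather than something forced by the order-$3$ condition, is the step I expect to demand the most care, and it is precisely where the explicit description of the exceptional curves through $Q$ must do the real work.
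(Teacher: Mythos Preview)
Your approach is the same as the paper's: both perturb $f_\ell, g_m$ so that some rational fiber of $S'$ carries a rational point of order~$3$, then invoke Theorem~\ref{maincor}. Your parametrisation $x_0=3s^2$, $F=-27s^4+6sy_0$, $G=y_0^2+54s^6-18s^3y_0$ is precisely the paper's (there $s=\xi_1$, $y_0=y_1$). Two remarks.

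First, your density argument via the Jacobian and the intermediate value theorem is correct but more work than needed. The paper simply uses that every elliptic curve over $\R$ has a nontrivial real $3$-torsion point, so $(F_0,G_0)$ already lies in $\phi(\R^2)$; one picks a real preimage $(\xi_0,y_0)$ and approximates it directly by rational $(\xi_1,y_1)$.

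Second, the gap you flag is real in your write-up, but the paper closes it without any analysis of a bad locus in $(s,y_0)$-space. The key is not to fix the base parameter $z_0$ in advance. On the \emph{original} surface $S$, only finitely many points lie on two or more $(-1)$-curves (any two of the $240$ exceptional curves meet in finitely many points), so a fortiori only finitely many lie on six. These finitely many bad points sit on finitely many fibers; choose $t\in\Q^*$ avoiding those fibers and with $S_t$ smooth, and take $Q$ to be the real $3$-torsion point on $S_t$. Then $Q$ automatically avoids six $(-1)$-curves of $S$. Now the condition ``$Q'$ does not lie on six $(-1)$-curves of $S'$'' is closed in the real topology on the joint parameter space of surface and point (via Remark~\ref{minonecurves} it is cut out by polynomial equations), so its complement is open; since it holds at $(S,Q)$, it persists for all sufficiently close rational $(\xi_1,y_1)$ and the resulting $(S',Q')$. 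This is exactly the continuity argument you already use for smoothness, applied once more.
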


\begin{theorem}\label{densewhennodal}
Suppose $k$ is an infinite field of characteristic not equal to $2$ or $3$. 
If $S$ is a del Pezzo surface of degree $1$ and the associated elliptic fibration $\E \to \P^1$ has a nodal fiber over a rational point in $\P^1$, then $S(k)$ is Zariski dense in $S$.
\end{theorem}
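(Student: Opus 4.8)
The plan is to apply Theorem \ref{maincor} to a carefully chosen point $Q$ lying on the nodal fibre itself. Let $t_0 \in \P^1(k)$ be the rational point over which $\E_{t_0}$ is nodal, and set $t = t_0$. Since $\E_{t_0}$ is an irreducible curve with a single node, its smooth locus $\E_{t_0}^{\ns}$ is a one-dimensional linear algebraic group: the split torus $\mathbb{G}_m$ when the two branches at the node are defined over $k$, and the nonsplit norm-one torus of a quadratic extension of $k$ otherwise. In either case, since $k$ is infinite, the group $\E_{t_0}^{\ns}(k)$ is infinite, and I would search for $Q$ among its elements.

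First I would dispatch the three conditions of Theorem \ref{maincor} that do not involve $\C_Q(5)$. The points of order at most $2$ in $\E_{t_0}^{\ns}(k)$ are finite in number (the identity $\O$ together with the $k$-rational $2$-torsion), and only finitely many points of the fibre lie on six of the finitely many $(-1)$-curves of $S$, by the zero-dimensional scheme of Remark \ref{minonecurves}. Removing these finitely many points from the infinite set $\E_{t_0}^{\ns}(k)$ leaves infinitely many candidates $Q$, each of order at least $3$ (hence with $y$-coordinate nonzero, so not fixed by the sign involution) and not lying on six $(-1)$-curves. This secures the first and fourth bullets. For the third, observe that in characteristic $5$ the torus $\E_{t_0}^{\ns}$ has no $k$-point of order $5$, since $\mu_5$ is trivial over a field of characteristic $5$; thus no admissible $Q$ can have order $5$ and the condition holds vacuously.

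The decisive point is the second bullet: when the chosen $Q$ has order at least $4$, I must show that $\C_Q(5)$ has infinitely many $k$-points. This is precisely the input that Theorem \ref{maincor} cannot furnish in general, because for a point on a smooth fibre it amounts to positivity of a Mordell--Weil rank. The key is that here the fibre is \emph{nodal}, and I expect the geometry of $\C_Q(5)$ to be governed by $\E_{t_0}$: a genus-one component of $\C_Q(5)$ should be a torsor under the generalized Jacobian of $\E_{t_0}$, which for a nodal curve is exactly the torus $\E_{t_0}^{\ns}$. In the split case this is a $\mathbb{G}_m$-torsor, hence trivial by Hilbert 90 ($H^1(k,\mathbb{G}_m)=0$), so the component is isomorphic to $\mathbb{G}_m$ and has infinitely many $k$-points. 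In the nonsplit case the same conclusion follows once one produces a single $k$-point on the relevant component — for instance a section osculating $\E_{t_0}$ at $Q$ to order five and defined over $k$ — since a torsor under a torus with a rational point is trivial, and a nonsplit norm-one torus still has infinitely many $k$-points when $k$ is infinite.

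The hard part will be making this link precise. I would return to the explicit description of $\C_Q(5)$ as the sections $x = X(z,w)$, $y = Y(z,w)$ of bidegree $(2,3)$ meeting $S$ at $Q$ to order five, and analyse how their defining equations degenerate as the cubic cutting out $\E_{t_0}$ in the $(x,y)$-plane acquires a double root. Concretely, I expect the arithmetic genus of the relevant component to remain $1$ while its geometric genus drops to $0$, so that it becomes a nodal rational curve whose smooth locus is the torsor above; the pathological alternative that every component is rational yet has no $k$-point is excluded automatically in the split case by Hilbert 90. Once $\C_Q(5)(k)$ is known to be infinite, all four hypotheses of Theorem \ref{maincor} are satisfied for $Q$, and that theorem yields the Zariski density of $S(k)$.
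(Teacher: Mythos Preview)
Your handling of the first, third, and fourth bullets of Theorem \ref{maincor} is fine. The gap is in the second bullet, and it is a real one: your expectation that for $Q$ on the nodal fibre the curve $\C_Q(5)$ degenerates to a nodal rational curve whose smooth locus is a torsor under the generalized Jacobian of $\E_{t_0}$ is incorrect. There is no a priori link between the Jacobian of $\C_Q(5)$ and the fibre $\E_{t_0}$; the curve $\C_Q(5)$ parametrizes osculating sections of the ambient $\P(2,3,1,1)$, and its isomorphism class depends on the whole surface $S$, not just on the fibre through $Q$. In fact the paper's own analysis shows the opposite of what you expect: letting $Q$ vary over $S_0^{\ns}$, the curves $\pC_Q(5)$ assemble into a genuine elliptic surface $X \to \A^1(x_0)$ whose generic fibre is a smooth curve of genus~$1$ (the discriminant of the quartic model is computed explicitly and does not vanish identically along the nodal locus). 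So for all but finitely many $Q \in S_0^{\ns}(k)$ the curve $\C_Q(5)$ is still an honest genus-one curve, and Hilbert~90 is of no help.

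What the paper actually does is exploit the \emph{two} distinguished $k$-rational points $P_1, P_2 \in \pC_Q(5)$ furnished by Lemma \ref{L:E0node} when the fibre is nodal. As $Q$ varies these give two sections $\chi_1,\chi_2$ of the elliptic surface $X \to \A^1$; taking $\chi_1$ as zero section, one must show $\chi_2$ has infinite order. This is established by passing to an explicit birational model, locating a fibre of Kodaira type $I_3$ at which the two sections meet in the identity component, and then invoking the formal group (in characteristic~$0$) or a comparison with the Igusa fibrations $\omega(p)$ (in characteristic $p>0$) to rule out $p$-power torsion. Finally, N\'eron specialization (Lemma \ref{specinforder}) transfers infinite order from the generic fibre to infinitely many closed fibres $\pC_Q(5)$, giving the required point $Q$ for Theorem \ref{maincor}. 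The nodal hypothesis enters not by making $\C_Q(5)$ rational, but by guaranteeing the existence of the second rational section $\chi_2$.
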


Our strategy to prove Theorem \ref{maincor} is to exhibit a rational map $\sigma \colon \C_Q(5) \dashrightarrow S$ such that its image has a component whose strict transform on $\E$ is a multisection of $\pi$ of infinite order (cf.~\cite{bogtsch}). In the next section, we will construct $\sigma$. 
To show that the image $\sigma(\C_Q(5))$ always has a horizontal component under the conditions in Theorem \ref{maincor}, we first choose a completion $\pC_Q(5)$ of the affine curve $\C_Q(5)$ and 
show that the added points correspond naturally to limits of the sections in $\C_Q(5)$, which allows us to show that $\sigma$ extends to the extra points in $\pC_Q(5) - \C_Q(5)$, sending them to $-4Q$ or $-5Q$ on the fiber of $\pi$ containing $Q$ (Section \ref{S:completion}). 
This allows us to characterize all cases where no component of $\pC_Q(5)$ has a horizontal image under $\sigma$ in Sections \ref{examples} and \ref{S:multisection}. 
In Section \ref{S:torsionbachange}, we show that the base change of $\pi\colon \E \to \P^1$ by a curve of genus at most $1$ has no nonzero torsion sections. Finally, we apply this to a horizontal component of $\sigma(\pC_Q(5))$ to prove all our main results in Section \ref{proofs}. 

\begin{remark}
For any explicit surface $S$ with a point $Q$, it is easy to check whether $\sigma(\C_Q(5))$ has a horizontal component, and if so, whether that component is a multisection of infinite order. Since this is indeed the 
case for some specific examples, we may already conclude that it is true for $S$ and $Q$ sufficiently general. 
\end{remark}

While we consider only surfaces given by \eqref{maineq} that are smooth, i.e., del Pezzo surfaces of degree~$1$, one could also consider {\em generalized del Pezzo surfaces} of degree~$1$, which have a birational model given by \eqref{maineq} that may have isolated rational double points. As for del Pezzo surfaces, there is a natural elliptic fibration on the blow-up of a generalized del Pezzo surface at the point corresponding to $\O$. All our results up to and including Section \ref{S:multisection} also hold for generalized  del Pezzo surfaces of degree $1$, as long as we assume that the point $Q$ does not lie on a reducible fiber, with the exception of Proposition \ref{outofhell5Q}. The proof 
of Proposition \ref{outofhell5Q} shows that there is one more singular surface that we should add to the list of examples where $\sigma(\pC_Q(5))$ is not horizontal. One can actually generalize many of our results to the case that $Q$ lies on a reducible fiber, but given the significant amount of additional computations required, this is not included in this paper. 

In the proof of Theorem \ref{densewhennodal}, we will view the family of the curves $\pC_Q(5)$ as $Q$ runs through the points on the nodal fiber as an elliptic surface. We may also consider the family of {\em all} curves $\pC_Q(5)$ as an elliptic threefold over $S$, possibly adding some extra component in some fibers to achieve flatness. This threefold has real points for any surface $S$ over $\R$;  
it would be interesting to study the Hasse principle and weak approximation for this elliptic threefold.

All computations were done using {\sc Magma} \cite{magma}. 
We thank Peter Bruin, Jean-Louis Colliot-Th\'el\`ene, Bas Edixhoven, Marc Hindry, Christian Liedtke, Bjorn Poonen, Alexei Skorobogatov, Damiano Testa, Anthony V\'arilly-Alvarado, and Bianca Viray for useful discussions. 
We thank the anonymous referee for their useful comments and suggestions. 
The first author thanks Universiteit Leiden and the Max-Planck-Institut in Bonn and the second author the Centre Interfacultaire Bernoulli in Lausanne for their hospitality and support.

\section{A family of sections}\label{multisection}

By a variety over a field we mean a separated scheme of finite type over that field. In particular, we do not assume that varieties are irreducible or reduced.  By a component we always mean an irreducible component. Curves are varieties whose components all have dimension $1$ and surfaces are varieties whose components all have dimension $2$. 

Let $k$ be a field of characteristic not equal to $2$ or $3$ and 
let $\P$ denote the weighted projective space $\P(2,3,1,1)$ over $k$ with coordinates $x,y,z,w$. 
Let $\P^1$ be the projective line over $k$ with coordinates $z,w$.
The subset $Z\subset \P$ given by $z=w=0$ contains the two singular points $(1:0:0:0)$ and $(0:1:0:0)$ of $\P$, 
so the complement $U = \P - Z$ is nonsingular. The projection $\varphi \colon \P \dashrightarrow \P^1$ onto the last two cooordinates is well defined on $U$. 
For each field extension $\ell$ of $k$, let $\C(\ell)$ denote the family of all curves $C$ in $U_\ell$, defined over $\ell$, for which the restriction $\varphi|_C \colon C \to \P_\ell^1$ is an isomorphism, that is, $\C(\ell)$ is the family of sections of $\varphi \colon U_\ell \to \P_\ell^1$.  
Whenever convenient, we will freely switch between viewing the elements of $\C(\ell)$ as curves and viewing them 
as morphisms $\P^1_\ell \to U_\ell$. 
The following lemma shows that there is an algebraic variety 
whose $\ell$-points are naturally in bijection with the curves in $\C(\ell)$. 

\begin{lemma}\label{CQA5}
For every field extension $\ell$ of $k$, there is 
a bijection $\A^7(\ell) \to \C(\ell)$ sending the point $(x_0,y_0,a,b,c,p,q)$ to the curve defined by
\begin{equation} \label{xy}
x = qz^2+pzw+x_0w^2 \qquad \mbox{and} \qquad y = cz^3+bz^2w+azw^2+y_0w^3.
\end{equation}
\end{lemma}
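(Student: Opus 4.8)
The plan is to identify the projection $\varphi\colon U \to \P^1$ with the total space of the rank-two vector bundle $\O(2)\oplus\O(3)$ on $\P^1$, so that a section of $\varphi$ becomes the same datum as a global section of this bundle; such global sections live in $H^0(\P^1,\O(2))\oplus H^0(\P^1,\O(3))$, which has dimension $3+4=7$, matching the $\A^7$ in the statement. To make this concrete I would cover $\P^1$ by the two charts $w\neq 0$ and $z\neq 0$. On $w\neq 0$ the weighted scaling $(x,y,z,w)\mapsto(\lambda^2x,\lambda^3y,\lambda z,\lambda w)$ lets me normalise $w=1$, giving affine coordinates $(X,Y,T)=(x/w^2,y/w^3,z/w)$ and identifying $\varphi^{-1}(\{w\neq0\})\cap U$ with $\A^2\times\A^1$, where $\varphi$ is the projection to $T$. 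Symmetrically the chart $z\neq 0$ carries coordinates $(X',Y',W')=(x/z^2,y/z^3,w/z)$, and on the overlap $W'=1/T$ with transition $X'=X(W')^2$ and $Y'=Y(W')^3$; these are exactly the transition rules for the fibre coordinates of $\O(2)$ and $\O(3)$.

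A section restricted to the chart $w\neq0$ is a morphism $T\mapsto(X(T),Y(T))$ into the fibre, so $X,Y\in\ell[T]$. The decisive step is to require that the section extend across the single missing point $T=\infty$: in the chart $z\neq0$ it must be given by polynomials in $W'$, and the transition forces $X(1/W')\,(W')^2$ and $Y(1/W')\,(W')^3$ to be regular at $W'=0$. This happens precisely when $\deg X\leq 2$ and $\deg Y\leq 3$. Writing $X=qT^2+pT+x_0$ and $Y=cT^3+bT^2+aT+y_0$ and clearing denominators through $x=Xw^2$, $y=Yw^3$ recovers exactly the homogeneous equations \eqref{xy}.

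It then remains to check that \eqref{xy} defines an element of $\C(\ell)$ for every tuple, and that the resulting map is a bijection. For well-definedness, the right-hand sides of \eqref{xy} are homogeneous in $(z,w)$ of degrees $2$ and $3$, matching the weights of $x$ and $y$, so $(z:w)\mapsto(x:y:z:w)$ is a well-defined morphism $\P^1\to\P(2,3,1,1)$ whose image avoids $Z=\{z=w=0\}$ and hence lies in $U$; composing with $\varphi$ gives the identity, so it is genuinely a section. Injectivity is immediate, since on $w=1$ the coefficients of $X$ and $Y$ are read off from the section. Surjectivity is exactly the degree-bound computation of the previous paragraph. I expect that computation to be the only substantive point: it is where extension over the point at infinity of $\P^1$ is used, and where the weights $2,3$ of the ambient space produce the bounds $\deg X\le2$, $\deg Y\le3$; the rest is routine bookkeeping with the weighted scaling.
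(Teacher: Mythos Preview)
Your proof is correct and, at the level of actual computation, coincides with the paper's: both restrict a section $\sigma$ to the affine chart $w\neq 0$, observe that it is given there by polynomials $X(T),Y(T)$ in $T=z/w$, and then use extension across $T=\infty$ (equivalently, the chart $z\neq 0$) to force $\deg X\le 2$ and $\deg Y\le 3$. The paper phrases this last step tersely as ``the fact that the image $C$ is contained in $U$ implies that $s_1$ and $s_2$ are constant and the degrees of $r_1$ and $r_2$ bounded by $2$ and $3$ respectively,'' without writing out the second chart explicitly.

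What you add is the conceptual identification of $U\to\P^1$ with the total space of $\O(2)\oplus\O(3)$, so that $\C(\ell)$ is literally $H^0(\P^1_\ell,\O(2)\oplus\O(3))\cong\A^7(\ell)$ and the dimension $7=3+4$ is explained rather than observed. This is a nicer packaging and makes the degree bounds transparent via the transition functions; the paper's argument is the same computation stripped of the bundle language.
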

\begin{proof}
Without loss of generality, we assume $\ell=k$. 
Clearly, the described map is well defined and injective. To show surjectivity, let $\sigma \colon \P^1 \to U$ be 
a section of $\varphi \colon U \to \P^1$. If we set $t= z/w$, then there are polynomials
$r_1,r_2,s_1,s_2 \in k[t]$ such that $\sigma$ is given on $\A^1 \subset \P^1-\{(1:0)\}$ by 
$$
t \mapsto \left[\frac{r_1(t)}{s_1(t)} : \frac{r_2(t)}{s_2(t)} : t : 1\right].
$$
The fact that the image $C$ of $\sigma$ is contained in $U$ implies that $s_1$ and $s_2$ are constant 
and the degrees of $r_1$ and $r_2$ bounded by $2$ and $3$ respectively. 
This shows that indeed there are $x_0,y_0,a,b,c,p,q \in k$ such that $C$ is given by \eqref{xy}. 
\end{proof}

Let $f,g \in k[z,w]$ be homogeneous of degree $4$ and $6$, respectively, and let $S \subset \P$ be the surface 
given by \eqref{maineq}. The number of $(-1)$-curves on $S$ is finite. Over a separable closure $\ksep$ of $k$ 
there are $240$ such curves; those that are defined over $k$ are characterized by the following lemma. 

\begin{lemma}\label{minusone}
The curves in $\C(k)$ that are contained in $S$ are exactly the $(-1)$-curves of $S$ that are defined over $k$.
\end{lemma}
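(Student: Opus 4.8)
The plan is to prove the two inclusions of the asserted equality separately, relying on the explicit parametrization from Lemma~\ref{CQA5}. Recall that a curve $C \in \C(k)$ is a section of $\varphi \colon U \to \P^1$, hence isomorphic to $\P^1$ via $\varphi$; in particular $C$ is a smooth rational curve defined over $k$, and as a divisor class it satisfies $C \cdot F = 1$, where $F$ is the fiber class of $\varphi|_S$ (equivalently the anticanonical class $-K_S$, since $|-K_S|$ cuts out the fibers). So the real content is to match ``rational section of $\varphi$ lying in $S$'' with ``$(-1)$-curve defined over $k$''.

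First I would show that every curve $C \in \C(k)$ contained in $S$ is a $(-1)$-curve. Since $C$ is a section of $\varphi|_S$, it meets the generic anticanonical fiber in exactly one point, so $-K_S \cdot C = 1$. As $C \cong \P^1$ is smooth and irreducible, the adjunction formula gives $C^2 + K_S \cdot C = 2g(C) - 2 = -2$, whence $C^2 = -2 - K_S\cdot C = -2 + 1 = -1$. Thus $C$ is a smooth rational curve with $C^2 = -1$ defined over $k$, i.e.\ a $(-1)$-curve defined over $k$. This direction is essentially a formal consequence of adjunction together with the section property.

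The reverse inclusion is the step I expect to carry the real work. Let $C$ be a $(-1)$-curve on $S$ defined over $k$; I must show $C \in \C(k)$, i.e.\ that $\varphi|_C \colon C \to \P^1$ is an isomorphism (defined over $k$) and that $C$ avoids the locus $Z$, so that $C \subset U$. The intersection number $-K_S \cdot C = 1$ forces $\varphi|_C$ to have degree $1$, hence to be a birational morphism from the smooth rational curve $C$ to $\P^1$; since the target is smooth and $C$ is a smooth projective rational curve, this morphism is an isomorphism. It remains to verify that $C$ meets neither of the two bad points: the base point $\O = (1:1:0:0)$ lies in $Z$ (the indeterminacy of $\varphi$), and I must rule out $C$ passing through it. Here I would use that $\O$ is the unique base point of $|-K_S|$ and that a $(-1)$-curve, being a section meeting each fiber once, cannot contain the base point of the pencil of fibers — passing through $\O$ would force $C$ to meet a general fiber in the base point as well, contradicting that $C$ meets each fiber transversally in a single point away from $\O$; I would make this precise using the fact established above that $\varphi|_C$ is an isomorphism onto $\P^1$, so $C$ has exactly one point over each $t \in \P^1$ and none of these can be the common base point $\O$.

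The main obstacle, and the part deserving the most care, is the set-up of these intersection-theoretic identities directly on the singular weighted projective space $\P(2,3,1,1)$ and the handling of the base point $\O$: one must be careful that $C^2$, adjunction, and the fiber class are computed on the smooth surface $S$ (not on $\P$), and that the identification of $-K_S$ with the fiber class of $\varphi|_S$ is used correctly. Once the dictionary ``section $\iff$ $-K_S \cdot C = 1$'' is in place and the avoidance of $Z$ is settled, both inclusions close and the lemma follows. The cleanest route is therefore to reduce everything to the invariants $-K_S \cdot C$, $C^2$, and $g(C)$, and to invoke adjunction on $S$ in both directions.
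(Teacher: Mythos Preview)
Your argument is correct and takes a genuinely different route from the paper. The paper's proof is essentially a citation: it invokes \cite{coombes} to guarantee that the $(-1)$-curves are defined over a separable extension, then quotes \cite[Thm.~1.2]{tonydp1} (equivalently \cite[Lemma~10.9]{shiodalattices}) for the fact that over $\ksep$ the $(-1)$-curves on $S$ are exactly the curves given by \eqref{xy}, and finishes by taking Galois invariants. Your approach, by contrast, is self-contained: you read off $-K_S\cdot C=1$ from the section property and then run adjunction on $S$ in both directions, never appealing to the classification of the $240$ lines.

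The one place where your write-up is loose is the exclusion of $\O$. The cleanest way to close it is to pass to the blow-up $\E$: if a $(-1)$-curve $C$ on $S$ met $\O$ with multiplicity $m\geq 1$, its strict transform $\tilde C$ on $\E$ would satisfy $\tilde C^2=-1-m^2$ and $K_\E\cdot\tilde C=m-1$, and adjunction on $\E$ forces $m\in\{0,1\}$; but $m=1$ would make $\tilde C$ a $(-2)$-curve, impossible on $\E$ since (as noted in the paper) every fiber of $\pi$ is irreducible. Hence $m=0$ and $C\subset U$. With that adjustment your proof is complete and has the virtue of being intrinsic to $S$ and $\E$; the paper's version trades this self-containment for brevity by outsourcing the geometric content to the literature.
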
 
\begin{proof}
The $(-1)$-curves are defined over a separable extension of $k$ by \cite[Theorem 1]{coombes}. 
This shows that the assumption that $k$ be perfect is not necessary in \cite[Thm. 1.2]{tonydp1},
which therefore implies that the $(-1)$-curves on $S_{\ksep}$ are exactly the curves given 
by \eqref{xy} for some $x_0,y_0,a,b,c,p,q \in \ksep$, which also follows from 
\cite[Lemma 10.9]{shiodalattices}. 
The lemma follows from taking Galois invariants.  
\end{proof}

\begin{prop}\label{CSdeg6}
For each curve $C \in \C(\ksep)$ that is not contained in $S$, we have $C \cdot S = 6$.  
\end{prop}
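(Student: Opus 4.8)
The plan is to compute the intersection number $C \cdot S$ directly by parametrizing $C$ via Lemma \ref{CQA5} and restricting the defining equation \eqref{maineq} of $S$ to $C$. Since $\varphi|_C \colon C \to \P^1$ is an isomorphism and $S$ meets $Z = \P - U$ in the single point $\O = (1:1:0:0)$ (indeed, setting $z=w=0$ in \eqref{maineq} forces $x^3 = y^2$, so on $Z$ the surface meets only $\O$), while $C \subset U$ by definition, the intersection $C \cap S$ lives entirely in $U$ and can be read off from the pullback of the equation of $S$ along the parametrization.

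First I would substitute the parametrization \eqref{xy} into \eqref{maineq}. Writing the section as $t \mapsto (x(t) : y(t) : t : 1)$ with $x(t) = qt^2 + pt + x_0$ and $y(t) = ct^3 + bt^2 + at + y_0$ in the affine chart $w=1$, the equation of $S$ becomes
\begin{equation}\label{pullback}
y(t)^2 - x(t)^3 - f(t,1)x(t) - g(t,1) = 0.
\end{equation}
The key computation is to determine the degree of the left-hand side as a polynomial in $t$. The leading terms are $c^2 t^6$ from $y(t)^2$ and $-q^3 t^6$ from $x(t)^3$; the terms $f(t,1)x(t)$ and $g(t,1)$ have degrees at most $6$, with $g$ contributing a genuine degree-$6$ term in general. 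So the left-hand side of \eqref{pullback} has degree at most $6$ in $t$, and the finite points of $C \cap S$ correspond to its roots, counted with multiplicity.

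The main obstacle, and the step requiring the most care, is to account correctly for the point at infinity $t = (1:0)$ and to verify that the total intersection number — counting both the affine roots and the contribution at infinity — is always exactly $6$, rather than merely at most $6$. Over $\ksep$ an intersection number of a curve with a surface in weighted projective space should be computed projectively, respecting the weights; the degree drop in \eqref{pullback} (which can occur precisely when $c^2 = q^3$, i.e.\ when $C$ passes through $\O$ in a way that lowers the affine count) is compensated by intersection multiplicity accumulating at $\O$. Here I would argue that $C \cdot S$ is a well-defined intersection number that is constant across the family $\C(\ksep)$ of sections not contained in $S$, either by a flatness/deformation argument showing the number is locally constant and hence equal to its generic value, or by directly analyzing the local intersection at $\O$ using the weights $(2,3,1,1)$ and the anticanonical description $C \cdot S = C \cdot (-3K_S)$ together with $C$ being a section of $\varphi$. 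The cleanest route is probably to use that $\O$ is the unique base point of $\varphi|_S$ and compute the local intersection multiplicity at $\O$ as the complement of the affine degree, confirming the sum is the constant $6$; the hypothesis $C \not\subset S$ guarantees \eqref{pullback} is not identically zero, so the count is genuinely finite.
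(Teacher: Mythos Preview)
Your direct approach can be made to work, but the paper avoids your ``main obstacle'' entirely by invoking the weighted B\'ezout theorem: $C$ has degree~$6$ (from \eqref{xy}), $S$ has degree~$6$, the product of the weights is~$6$, and since $C \subset U$ lies in the smooth locus of $\P$, the intersection number is $(\deg C)(\deg S)/6 = 6$.

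More importantly, you have genuinely confused yourself at the point you flag as the obstacle. The curve $C$ \emph{never} passes through $\O$: by definition $C \subset U$, while $\O \in Z = \P - U$. The degree drop in your affine polynomial \eqref{pullback} does not signal intersection at $\O$; it happens exactly when $C$ meets $S$ over $(1:0) \in \P^1$, i.e.\ at the point $(q:c:1:0) \in U$, and the condition is $c^2 = q^3 + f_4 q + g_6$, not $c^2 = q^3$. (You dropped the $f$ and $g$ contributions to the $t^6$ coefficient.) There is no need for deformation arguments or local analysis at $\O$: if you simply refrain from dehomogenizing and substitute \eqref{xy} directly into \eqref{maineq}, you obtain a homogeneous form of degree~$6$ in $(z,w)$, not identically zero because $C \not\subset S$, hence with exactly six zeros on $\P^1$ counted with multiplicity. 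That \emph{is} the intersection number, and the argument ends there. Finally, the expression $C \cdot (-3K_S)$ does not make sense as written, since $C$ is not a curve on $S$.
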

\begin{proof}
The equations \eqref{xy} show that $C$ has degree $6$. Also, $C$ is contained in $U$, so the intersection $C \cap S$ with the hypersurface $S$ of degree $6$ is contained in $U$, which is smooth. Therefore, intersection multiplicities are defined as usual, and the weighted analogue of B\'ezout's Theorem gives $C \cdot S = \mu^{-1}(\deg C)\cdot (\deg S)$, where $\mu=6$ is the product of the weights of $\P$. The statement follows. 
\end{proof}

The intersection $S \cap Z$ consists of the single point $\O = (1:1:0:0)$.
For any point $Q \in S(k) - \{\O\}$, and for $1 \leq n \leq 6$, we let $\C_Q(n)\subset \A^7$ denote the 
subvariety of all points whose associated curve, through the bijection of Lemma \ref{CQA5},
intersects $S$ at $Q$ with multiplicity at least $n$. 
Note that for $n=5$ this coincides with the definition of $\C_Q(5)$ in the introduction. 

Let $Q \in S(k)-\{\O\}$. After applying an automorphism of $\P^1$ (and the corresponding automorphism of $\P$), we assume without loss of generality that $\varphi(Q) = 0 = (0:1)$, say $Q = (x_0:y_0:0:1)$ for some $x_0,y_0 \in k$. The variety $\C_Q(1)\subset \A^7$ consists of the points of $\A^7$ whose first two coordinates equal $x_0$ and $y_0$, respectively, so the projection onto the last five coordinates gives an isomorphism $\C_Q(1) \to \A^5$. 
From now on we will freely use this isomorphism to identify $\C_Q(1)$ and $\A^5$ with coordinates $a,b,c,p,q$.

As in the introduction, we let $\E$ denote the blow-up of $S$ at $\O$ and $\pi\colon \E \to \P^1$ the elliptic fibration induced by the anticanonical map $\varphi|_S \colon S \dashrightarrow \P^1$. 
We will sometimes identify the fiber $\E_t$ above $t = (z_0:w_0)$ with its isomorphic image on $S$, 
equal to the intersection of $S$ 
with the hyperplane $H_t$ given by $w_0z=z_0w$ and denoted by $S_t$.
The intersection $H_t \cap S$ is given by a Weierstrass equation; in particular, all fibers 
are irreducible, and therefore all singular fibers have type $I_1$ or $II$. 
Whenever we speak of vertical or horizontal curves or of fibers on $S$ or $\E$, we refer to this fibration.
We write 
\begin{align*}
f &= f_4z^4+f_3z^3w+\dots+f_0w^4, \\ 
g &= g_6z^6+g_5z^5w+\dots + g_0w^6,
\end{align*} 
so the fiber $\E_0$ above $t = 0$, containing $Q$, is given by $y^2 = x^3 + f_0x+ g_0$. 

We can give equations for $\C_Q(n)$ inside $\C_Q(1) = \A^5$ as follows. Note that $t = z/w$ is a local parameter at the point $(0:1)$ on $\P^1$. Hence, around $Q$, the curve associated to $(a,b,c,p,q) \in \C_Q(1)$ is parametrized by 
\begin{equation}\label{paramC}
\left\{
\begin{array}{rl}
x &= qt^2+pt+x_0,\\ 
y &=ct^3+bt^2+at+y_0,\\
z & = t, \\
w &= 1.
\end{array}\right.
\end{equation}
For $0 \leq i \leq 6$, let  
$F_i \in k[a,b,c,p,q]$ be the coefficient of $t^i$ in 
\begin{equation}\label{plugitin}
  - y^2 + x^3 + f(t,1) x + g(t,1), 
\end{equation}
with $x$ and $y$ as in \eqref{paramC}. Then we have 
\begin{align}
F_0 & = 0, \nonumber \\
F_1 &=    -2y_0a + (3x_0^2 + f_0)p + f_1x_0 + g_1,\nonumber\\
F_2 &=     -a^2 - 2y_0b + 3x_0p^2 + f_1p + (3x_0^2 + f_0)q + f_2x_0 + g_2,\nonumber\\
F_3 &=     -2ab - 2y_0c + p^3 + 6x_0pq + f_2p + f_1q + f_3x_0 + g_3,\label{Fis} \\
F_4 &=     -2ac - b^2 + 3p^2q + f_3p + 3x_0q^2 + f_2q + f_4x_0 + g_4,\nonumber\\
F_5 &=     -2bc + 3pq^2 + f_4p + f_3q + g_5, \nonumber\\
F_6 &=     -c^2 + q^3 + f_4q + g_6,\nonumber
\end{align}
and the variety $\C_Q(n) \subset \C_Q(1) = \A^5$ is given by the equations $F_1 = F_2 = \ldots  = F_{n-1} = 0$. 

We define the polynomials
\begin{align}
\Phi_2 &= 4(x^3+f_0x+g_0), & \Phi_4 &= \Psi \Phi_3 - \Phi_2^2, \label{idPhis} \nonumber \\
\Psi   &= \tfrac{1}{2}\tfrac{{\rm d}}{{\rm d}x}\Phi_2, & \Phi_5 &= \Phi_2^2\Phi_4 - \Phi_3^3, \\
\Phi_3 &=3x\Phi_2-\tfrac{1}{4}\Psi^2, & \Phi_6 &= \Phi_5 - \Phi_4^2. \nonumber 
\end{align}
in $k[x]$. For every integer $j$ with $2\leq j\leq 6$, the polynomial $\Phi_j$ is the factor of
the $j$-th division polynomial of the fiber $\E_0$ that corresponds to the {\em primitive} 
$j$-torsion. In particular, the polynomials $\Phi_2, \Phi_3, \Phi_2\Phi_4$, $\Phi_5$, and 
$\Phi_2\Phi_3\Phi_6$ are the $j$-th division polynomials for $j=2,3,4,5,6$, respectively. 
For notational convenience, we set $\phi_j = \Phi_j(x_0)$ for all $j\geq 2$,
as well as 
$$
\psi = \Psi(x_0), \qquad 
h_i = (f_ix_0+g_i)\phi_2^{i-1}, \qquad 
l_i = f_i\phi_2^i - h_i\psi,
$$
for $1\leq i \leq 6$, where we set $f_5=f_6=0$. 

\begin{lemma}\label{CQ4A2}
If $y_0 \neq 0$, then the projection of $\C_Q(1) = \A^5$ onto its 
last two coordinates restricts to an isomorphism $\C_Q(4) \to \A^2$.
The inverse is given by $(p,q) \mapsto (a,b,c,p,q)$ with 
\begin{align*}
a &= \frac{\psi p+2h_1}{4y_0}, \\
b &= \frac{\psi\phi_2q + 2\phi_3p^2+2l_1p+2h_2-2h_1^2}{4y_0\phi_2},\\
c &= \frac{\zeta q+ \eta}{2y_0\phi_2^2}, \qquad \mbox{with} \\
 &\qquad \zeta = \phi_2(2\phi_3p+l_1),\\
 &\qquad \eta =-\phi_4p^3-(2h_1\phi_3+l_1\psi)p^2
 +(l_2-2h_1l_1+h_1^2\psi)p +h_3-2h_1h_2+2h_1^3.
\end{align*}
\end{lemma}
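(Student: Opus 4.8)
The plan is to exploit the triangular shape of the defining equations of $\C_Q(4)$. By \eqref{Fis}, the subvariety $\C_Q(4) \subset \C_Q(1) = \A^5$ is cut out by $F_1 = F_2 = F_3 = 0$, and these three polynomials are affine-linear in $a$, $b$, and $c$ respectively, each with leading coefficient $-2y_0$: indeed $F_1$ involves only $a$ and $p$, the polynomial $F_2$ involves only $a,b,p,q$, and $F_3$ involves only $a,b,c,p,q$. Since $y_0 \neq 0$ by hypothesis, I would solve this system successively: first solve $F_1 = 0$ for $a$ as an affine-linear function of $p$; then substitute into $F_2 = 0$ and solve for $b$ as a function of $(p,q)$; then substitute both into $F_3 = 0$ and solve for $c$ as a function of $(p,q)$.

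This successive solution is forced and unique, so for every $(p,q)$ there is exactly one triple $(a,b,c)$ with $(a,b,c,p,q) \in \C_Q(4)$, and the resulting assignment $(p,q) \mapsto (a,b,c,p,q)$ is given by polynomials in $p,q$ with coefficients in $k$: the only denominators introduced are powers of the nonzero scalars $y_0$ and $\phi_2$. Here one uses that $\phi_2 = \Phi_2(x_0) = 4(x_0^3 + f_0x_0 + g_0) = 4y_0^2 \neq 0$, since $Q$ lies on the fiber $\E_0$ given by $y^2 = x^3 + f_0x + g_0$. This assignment is therefore a morphism $\A^2 \to \C_Q(4)$, two-sided inverse to the projection onto $(p,q)$ by uniqueness, which proves that the projection is an isomorphism.

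It then remains to check that the successive solutions agree with the stated closed forms. The formula for $a$ is immediate from $F_1 = 0$ upon writing $3x_0^2 + f_0 = \tfrac12\psi$, coming from $\Psi = \tfrac12\tfrac{{\rm d}}{{\rm d}x}\Phi_2$ in \eqref{idPhis} evaluated at $x_0$, together with $f_1x_0 + g_1 = h_1$. For $b$, I would substitute the $a$-formula into $F_2 = 0$, clear denominators using $4y_0^2 = \phi_2$, and recognize the coefficient of $p^2$ through the identity $4\phi_3 = 12x_0\phi_2 - \psi^2$ arising from $\Phi_3 = 3x\Phi_2 - \tfrac14\Psi^2$, along with the definitions of $l_1$ and $h_2$. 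The hardest step is the $c$-formula: substituting both $a$ and $b$ into $F_3 = 0$ produces the cross term $-2ab$, whose expansion must be reorganized as a polynomial in $p$ and $q$. Repeatedly replacing $y_0^2$ by $\tfrac14\phi_2$, the coefficient of $p^3$ collapses to $-\phi_4/\phi_2^2$ precisely because of the defining identity $\Phi_4 = \Psi\Phi_3 - \Phi_2^2$, while the coefficient of $q$ matches $\zeta = \phi_2(2\phi_3p + l_1)$ using $4\phi_3 = 12x_0\phi_2 - \psi^2$ and $l_1 = f_1\phi_2 - h_1\psi$. The remaining coefficients of $\eta$ are a matter of careful bookkeeping with the definitions of the $h_i$ and $l_i$; I expect this final identification of $\eta$ to be the main obstacle, as it is the one place where several of the auxiliary quantities interact simultaneously.
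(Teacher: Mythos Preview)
Your proposal is correct and follows essentially the same approach as the paper: both exploit the triangular structure of $F_1,F_2,F_3$ (each being affine-linear in $a,b,c$ respectively with coefficient $-2y_0$) to solve successively and thereby exhibit the projection as an isomorphism. The paper phrases this as a composition of three coordinate projections $\rho_3\circ\rho_2\circ\rho_1$, each an isomorphism onto an affine space of one lower dimension, but this is the same successive-elimination argument you give; the paper does not spell out the algebraic verification of the explicit formulas for $a,b,c$, so your discussion of how the identities from \eqref{idPhis} enter is more detailed than what the paper provides.
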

\begin{proof}
Since $F_1$ is linear in $a$, the projection of $\C_Q(1) = \A^5$ along the $a$-axis induces an isomorphism 
$\rho_1$ from $\C_Q(2)$ to $\A^4$ with coordinates $(b,c,p,q)$, of which the inverse is determined by the given expression 
for $a$. The image $\rho_1(\C_Q(3)) \subset \A^4$ has a defining equation that is linear in $b$, as $F_2$ is linear $b$ and $F_1$ is independent of $b$. 
Therefore, the projection from $\rho_1(\C_Q(2)) = \A^4$ along the $b$-axis restricts to an isomorphism 
$\rho_2$ from $\rho_1(\C_Q(3))$ to $\A^3$ with coordinates $(c,p,q)$, of which the 
inverse is determined by the given expression for $b$. 
Finally, the defining 
equation of the image $\rho_2(\rho_1(\C_Q(4))) \subset \A^3$ is linear in $c$, as $F_3$ is linear in $c$ and 
$F_1$ and $F_2$ are independent of $c$. Therefore, the projection of $\rho_2(\rho_1(\C_Q(3))) = \A^3$ along the $c$-axis restricts to an isomorphism 
$\rho_3$ from $\rho_2(\rho_1(\C_Q(4)))$ to $\A^2$ with coordinates $(p,q)$, of which the inverse determined by the given expression for $c$. The composition $\rho_3 \circ \rho_2 \circ \rho_1\colon \C_Q(4) \to \A^2$ 
is the isomorphism of the lemma.
\end{proof}

From now on we will assume $y_0 \neq 0$, or equivalently $\phi_2 \neq 0$, 
and we identify $\C_Q(4)$ and $\A^2$ with coordinates $(p,q)$ through 
the isomorphism of Lemma \ref{CQ4A2}. We may eliminate the variables
$a,b,c$ from the equation $F_4=0$; after multiplying all coefficients by $\phi_2^3$, we
find that the variety $\C_Q(5) \subset \C_Q(4) = \A^2$ is defined by 
\begin{equation}\label{CQ5}
c_1q^2 + (c_2p^2+c_3p+c_4)q = c_5p^4+c_6p^3+c_7p^2+c_8p+c_9
\end{equation}
with
\begin{align*}
c_1 &=   \phi_2^2\phi_3,\\
c_2 &=   -3\phi_2\phi_4,\\
c_3 &=   -2\phi_2(l_1\psi+2h_1\phi_3),\\
c_4 &=   \phi_2(h_1^2\psi - 2l_1h_1 + l_2),\\
c_5 &=   \phi_3^2 - \phi_4\psi,\\
c_6 &=   2l_1\phi_3 -2h_1\phi_2^2 - 4h_1\phi_4 - l_1\psi^2,\\
c_7 &=   h_1^2\psi^2 - 2(3h_1^2 - h_2)\phi_3 - (4l_1h_1 - l_2)\psi + l_1^2,\\
c_8 &=   (4h_1^3 - 2h_1h_2)\psi - 6l_1h_1^2 + 2l_1h_2 + 2l_2h_1 - l_3,\\
c_9 &=   5h_1^4 - 6h_1^2h_2 + 2h_1h_3 + h_2^2 - h_4.
\end{align*}

As we assumed that 
$y_0, \phi_2$ are nonzero and that the characteristic of $k$ is not $2$ or $3$, the vanishing of $\phi_3$ and $\phi_4$ would imply that $Q$ has both order $3$ and $4$ in 
$\E_0^\ns(k)$, which is a contradiction, so the coefficients $c_1$ and $c_2$ do not both vanish, and $\C_Q(5)$ is a curve, though not necessarily reduced or irreducible.
We will identify $\C_Q(5)$ with its image in $\A^2$ and we view the coordinates $a$, $b$, and $c$ as functions on $\C_Q(4)$ or $\C_Q(5)$, as given in Lemma \ref{CQ4A2}. 

\begin{remark}\label{F4F5F6polys}
The functions $F_4, F_5$, and $F_6$ are regular on $\C_Q(4)\isom \A^2$ and can therefore be identified with polynomials in $k[p,q]$.  
\end{remark}

\begin{remark}\label{minonecurves}
The $(-1)$-curves on $S$ going through $Q$ correspond to the points of the subscheme in $\C_Q(4) \isom \A^2$ given by $F_4=F_5=F_6=0$.  
\end{remark}

\begin{remark}\label{genulas}
A special case of Theorem \ref{maincor} is Theorem 2.1(2) of \cite{ulastwo}; 
indeed, when $f=0$ and $g$ vanishes at $(1:0) \in \P^1$, and $Q=(1:1:1:0)$, then the curve $\C_Q(5)$ is isomorphic to the curve given in Theorem 2.1(2) of \cite{ulastwo}.   
The generalizations of this theorem given in \cite[Theorems C and D]{jabara}
are also a special case of our Theorem \ref{maincor}, where one uses $Q = (0:1:1:0)$, which has order $3$ in its fiber in the case of Theorem C. The proofs of Theorems C and D in \cite{jabara} are incomplete, but they do work for surfaces $S$ that are sufficiently general.
More precisely, it is not shown that the rational function 
$T(\varrho)$ in the proof of Theorem C (and its implicit equivalent for Theorem D) is always nonconstant. 
In our geometric interpretation, this is equivalent to $\sigma(\C_Q(5))$ having a horizontal component. 
Also, there is no proof of the claim that $X(2 \cdot P_{\varrho})$ is never contained in $Q[\varrho]$ in the proof of Theorem C (and its implicit equivalent for Theorem D), which is crucial for the argument that the point $P_\varrho$ has infinite order on the elliptic curve $\E_\varrho'$.
\end{remark}

Every curve $C \in \C(k)$ corresponding to a point $P \in \C_Q(5)(k)$ and not contained in $S$,intersects $S$ with multiplicity at least $5$ at $Q$,
so by Proposition \ref{CSdeg6}, there is a unique sixth point of intersection, which is also defined over $k$.
We define a rational map 
$$
\sigma \colon \C_Q(5) \dashrightarrow S
$$
by sending $P$ to the sixth intersection point of $C$ with $S$. The map $\sigma$ is defined over $k$. By Proposition \ref{CSdeg6}, it is well defined at each point $P \in \C_Q(5)$ whose corresponding curve is not contained in $S$, and thus there are at most $240$ points $P \in \C_Q(5)$ where $\sigma$ is not well defined (see Lemma \ref{minusone} and the sentences before).  
Every horizontal component of the image of $\sigma$, or its strict transform on $\E$, yields a multisection of the elliptic fibration $\pi \colon \E \to \P^1$.

We can describe the map $\sigma$ very explicitly. The curve $C$ corresponding to $(a,b,c,p,q)\in \C_Q(5)$ is parametrized 
by \eqref{paramC}. When we substitute the expressions of \eqref{paramC} into equation \eqref{plugitin}, we obtain 
$t^5(F_5+F_6t)$, so the sixth intersection point of $C \cap S$ is given by \eqref{paramC} with $t = -F_5/F_6$.

\section{A completion of the family of sections}\label{S:completion}

We keep the notation of the previous section. In particular, the field $k$, the weighted projective space $\P=\P(2,3,1,1)$ over $k$ with coordinates $x,y,z,w$, the projective line $\P^1$ over $k$ with coordinates $z,w$, the surface $S\subset \P$, and the points $\O, Q\in S(k)$ are as before, and so are the objects that depend on them, including the elliptic fibration $\pi \colon \E \to \P^1$, the elements $\psi, \phi_j, c_i \in k$, the curve $\C_Q(5) \subset \A^2$, the coordinates $p,q$ of $\A^2$, the functions $a,b,c,F_i$ on $\C_Q(5)$, and the map $\sigma \colon \C_Q(5) \dashrightarrow S$.  

We will see in Theorem \ref{ellfibnotors} that when the closure of the image $\sigma(\C_Q(5))\subset S$ contains a horizontal component with respect to the natural elliptic fibration $\pi\colon \E \to \P^1$, then we can use such a component to construct a base change of $\pi$ with a section of infinite order. 
Unfortunately, in some cases the image $\sigma(\C_Q(5))$ does not contain such a component. 
In order to investigate when this happens, we extend the map 
$\sigma \colon \C_Q(5) \dashrightarrow S$ to a projective completion $\pC_Q(5)$ of the affine curve $\C_Q(5)$ and first determine the image of the limit points in $\Omega=\pC_Q(5)-\C_Q(5)$ (see Proposition \ref{imageinf}).  

For every extension $\ell$ of $k$, the points in $\C_Q(5)(\ell)$ correspond to elements of $\C(\ell)$, which are curves in $U_{\ell}$. So the curve $\C_Q(5)$ parametrizes a family of curves in $U\subset \P$. The elements of $\Omega$ correspond to the limit curves of this family.
Viewing the elements of $\C(\ell)$ as sections $\P_{\ell}^1 \to U_{\ell}$ of $\varphi$, we define the morphism 
$$
\gamma \colon \C_Q(5) \times \P^1 \to \P
$$ 
by $\gamma( P  ,  R ) = \chi(R)$, where $\chi \in \C(\ell)$ is the section of $\varphi$ corresponding to $P \in \C_Q(5)(\ell)$.
The morphism $\gamma$ is defined over $k$. 
In terms of the coordinates $(p,q)$ on $\C_Q(5)\subset \A^2$, the map $\gamma$ sends 
$\big((p,q),(z:w)\big)$ to $(x:y:z:w)$
with $x$ and $y$ as in \eqref{xy} and $a,b,c$ as in Lemma \ref{CQ4A2}. For each 
point $P \in\C_Q(5)(\ell)$ with corresponding section $\chi \in \C(\ell)$, 
the image $\chi(\P^1_\ell) \subset U_\ell \subset \P_\ell$ is the image 
under $\gamma$ of the fiber of the trivial $\P^1$-bundle $\C_Q(5) \times \P^1$ over $P$.
Therefore, we may find an appropriate completion $\pC_Q(5)$, as well as the 
limit curves corresponding to the elements in $\pC_Q(5)-\C_Q(5)$ as follows. 
Start with an arbitrary completion $\C_Q^0(5)$ of $\C_Q(5)$ and the trivial $\P^1$-bundle 
$\Gamma^0 = \C_Q^0(5) \times \P^1$ over it. Now $\gamma$ is defined on an open subset of $\Gamma^0$. After an appropriate sequence of blow-ups and blow-downs, we obtain a surface 
$\Gamma$ that is birational to $\Gamma^0$ to which $\gamma$ extends as a morphism, as well as a new completion $\pC_Q(5)$ such that the $\P^1$-bundle structure $\C_Q(5) \times \P^1 \to \C_Q(5)$ extends to a conic bundle structure $\Gamma \to \pC_Q(5)$. Note that it is not necessary to require that $\pC_Q(5)$ be smooth.
The limit curves are then the images under $\gamma$ of the fibers of $\Gamma \to \pC_Q(5)$ over the points in $\Omega = \pC_Q(5)-\C_Q(5)$.

The problem with the process above, in which we construct $\pC_Q(5)$ and $\Gamma$, is that we are not working with a single del Pezzo surface of degree $1$, but with all of them, and we have to distinguish several cases of monoidal transformations, based on the types of singularities at the points in $\C_Q^0(5)-\C_Q(5)$. Therefore, instead of presenting this process here, we will immediately introduce the result: a completion $\pC_Q(5)$ together with a conic bundle $\Gamma \to \pC_Q(5)$ that works in all cases, in the sense that $\gamma$ extends to it. 

\subsection{Compactifying $\pC_Q(5)$}

Let $\ovp,\ovq,\ovr$ 
be the coordinates of the weighted projective space $\P(1,2,1)$, and let $\HH \to \P(1,2,1)$ be the blow-up at the singular point $(0:1:0)$. Since $\P(1,2,1)$ is isomorphic to a cone in $\P^3$, the surface $\HH$ is smooth; it is in fact a Hirzebruch surface.  By sending $(p,q)$ to $(p:q:1)$, we identify $\A^2$ with an open subset of $\P(1,2,1)$ and hence with an open subset of $\HH$. In doing so, we also identify the function field $k(p,q)$ of $\A^2$ with that of $\HH$.  

Let $\pC_Q(5)$ denote the completion of $\C_Q(5)$ inside $\HH$. Note that the completion of $\C_Q(5)$ inside $\P(1,2,1)$ contains the singular point $(0:1:0)$ if and only if the coefficient $c_1$ of $q^2$ in \eqref{CQ5} vanishes, i.e., if and only if $Q$ has order $3$ in $\E_0^\ns(k)$. Hence, if $Q$ does not have order $3$, we may identify $\pC_Q(5)$ with the completion of $\C_Q(5)$ inside $\P(1,2,1)$; as $c_1$, $c_2$, and $c_5$ do not all vanish, this completion is given by
\begin{equation}\label{completion}
c_1\ovq^2 + (c_2\ovp^2+c_3\ovp\ovr+c_4\ovr^2)\ovq = c_5\ovp^4+c_6\ovp^3\ovr+c_7\ovp^2\ovr^2+c_8\ovp\ovr^3+c_9\ovr^4.
\end{equation}

We identify $\HH$ with the variety in $\P(1,2,1) \times \P^1(s,t)$ given by 
$\ovp t=\ovr s$. Denoting the zeroset in $\HH$ of a doubly homogeneous 
polynomial $h$ in $k[\ovp,\ovq,\ovr][s,t]$ by $Z(h)$, we define the open subsets 
$$
\HH_1 = \HH - Z(\ovr), \qquad \HH_2 = \HH - Z(\ovp), \qquad \HH_3 = \HH - Z(\ovq t), \qquad \HH_4 = \HH - Z(\ovq s)
$$ 
of $\HH$. In the function field of $\HH$, we have 
$p=\frac{s}{t}=\frac{\ovp}{\ovr}$ and $q = \frac{\ovq}{\ovr^2}$.
We define the functions 
\begin{align}
\lambda_1&=p, & \lambda_2&=p^{-1}, & \lambda_3&=p,  & \lambda_4&=p^{-1},  \nonumber \\
\mu_1 &= q , & \mu_2 & =qp^{-2} , & \mu_3 &= q^{-1} ,& \mu_4 &= p^2q^{-1} \nonumber \\
\fa_1 &= a, & \fa_2 &= a\lambda_2,   & \fa_3 &= a,      & \fa_4 &= a\lambda_4, \label{fraks} \\ 
\fb_1 &= b, & \fb_2 &= b\lambda_2^2, & \fb_3 &= b\mu_3, & \fb_4 &= b\lambda_4^2\mu_4, \nonumber \\ 
\fc_1 &= c, & \fc_2 &= c\lambda_2^3, & \fc_3 &= c\mu_3, & \fc_4 &= c\lambda_4^3\mu_4 \nonumber
\end{align}
in the function field $k(p,q)$ of $\HH$. 

\begin{lemma}\label{lem:affcoords}
For each $i \in \{1,2,3,4\}$, the functions $\lambda_i,\mu_i,\fa_i,\fb_i,\fc_i$ are 
regular on $\HH_i$ and the map $\HH_i \to \A^2$ sending $R$ to 
$(\lambda_i(R),\mu_i(R))$ is an isomorphism.
The sets $\HH_1$, $\HH_2$, $\HH_3$, and $\HH_4$ cover $\HH$. 
\end{lemma}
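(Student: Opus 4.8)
The plan is to verify the regularity and isomorphism claims one chart at a time, and then dispose of the covering statement by a direct incidence computation. Throughout I would use that, by Lemma \ref{CQ4A2}, the functions $a,b,c$ are honest polynomials in $p,q$ (their denominators $4y_0$, $4y_0\phi_2$, $2y_0\phi_2^2$ are nonzero constants): $a$ is affine-linear in $p$ and independent of $q$, $b$ is a combination of $1,p,p^2,q$, and $c$ is a combination of $1,p,p^2,p^3,q,pq$; and that in the function field $p=s/t=\ovp/\ovr$ and $q=\ovq/\ovr^2$. First I would treat $i=1,2$. Since $\HH_1$ lies in $\{\ovr\ne0\}$ and $\HH_2$ in $\{\ovp\ne0\}$, each avoids the center $\ovp=\ovr=0$ of the blow-up $\HH\to\P(1,2,1)$, so the blow-up restricts to an isomorphism identifying $\HH_i$ with the corresponding standard smooth affine chart of $\P(1,2,1)$, which is isomorphic to $\A^2$ with coordinates $(\ovp/\ovr,\ovq/\ovr^2)=(\lambda_1,\mu_1)$ for $i=1$ and $(\ovr/\ovp,\ovq/\ovp^2)=(\lambda_2,\mu_2)$ for $i=2$. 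Regularity of $\fa_i,\fb_i,\fc_i$ then reduces to substituting the polynomial expressions for $a,b,c$ and checking that the prefactors in \eqref{fraks} clear all negative powers of $p$; this amounts to the observation that $a,b,c$ have ``weights'' $1,2,3$ under the scaling $(p,q)\mapsto(\lambda p,\lambda^2q)$, so dividing by $p,p^2,p^3$ turns each monomial into a monomial in $\lambda_2=p^{-1}$ and $\mu_2=qp^{-2}$ with nonnegative exponents.

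The crux is the charts $\HH_3,\HH_4$, which meet the exceptional divisor. On the locus $\ovq\ne0$ I would normalize $\ovq=1$; the residual freedom in the weight-$2$ scaling is exactly the reflection $(\ovp,\ovr)\mapsto(-\ovp,-\ovr)$. Working in the chart $t\ne0$ for $i=3$ (where $t=1$ and $\ovp=\ovr s$, giving blow-up coordinates $(\ovr,s)$), resp. $s\ne0$ for $i=4$ (where $s=1$ and $\ovr=\ovp t$, giving coordinates $(\ovp,t)$), this reflection acts as $(\ovr,s)\mapsto(-\ovr,s)$, resp. $(\ovp,t)\mapsto(-\ovp,t)$. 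Since a reflection fixing a divisor has smooth quotient, $\HH_i$ is smooth and $\O(\HH_i)$ is the invariant ring $k[\ovr^2,s]$, resp. $k[\ovp^2,t]$; recognizing $\ovr^2=\ovr^2/\ovq=q^{-1}=\mu_3$ and $s=s/t=p=\lambda_3$, resp. $\ovp^2=\ovp^2/\ovq=p^2q^{-1}=\mu_4$ and $t=t/s=p^{-1}=\lambda_4$, identifies $\HH_i$ with $\A^2$ with coordinates $(\lambda_i,\mu_i)$. Regularity of $\fa_i,\fb_i,\fc_i$ is again a monomial check: in chart $3$ the only unbounded direction is $q\to\infty$, so $\fa_3=a$ is already a polynomial in $\lambda_3$, while multiplying $b,c$ by $\mu_3=q^{-1}$ suffices; in chart $4$ both $p,q\to\infty$ with $p^2/q$ bounded, and the prefactors $\lambda_4=p^{-1}$, $\lambda_4^2\mu_4=q^{-1}$, and $\lambda_4^3\mu_4=p^{-1}q^{-1}$ are precisely what is needed to render every monomial of $a,b,c$ regular.

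For the covering statement, a point of $\HH$ omitted from all four sets would lie in $Z(\ovr)\cap Z(\ovp)\cap Z(\ovq t)\cap Z(\ovq s)$. From $\ovp=\ovr=0$ we get $\ovq\ne0$, since $(0:0:0)$ is not a point of $\P(1,2,1)$; the last two conditions then force $t=s=0$, which is impossible in $\P^1$. Hence no such point exists and $\HH=\HH_1\cup\HH_2\cup\HH_3\cup\HH_4$.

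The hard part will be the honest treatment of $\HH_3$ and $\HH_4$: one must see through the weight-$2$ coordinate $\ovq$ that the potential $\Z/2$-quotient singularity of $\P(1,2,1)$ at $(0:1:0)$ has genuinely been resolved by the blow-up, so that the residual involution becomes a reflection with smooth quotient and $(\lambda_i,\mu_i)$ are bona fide affine coordinates on an $\A^2$ rather than merely invariant functions on a singular quotient. Once that identification is in place, everything else is routine bookkeeping with the explicit polynomials of Lemma \ref{CQ4A2}.
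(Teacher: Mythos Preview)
Your proposal is correct and follows the same approach as the paper's own proof. The paper, however, is far terser: it simply declares the regularity and isomorphism statements for $(\lambda_i,\mu_i)$ and the covering claim to be ``a standard computation'', and for $\fa_i,\fb_i,\fc_i$ merely says that Lemma~\ref{CQ4A2} lets one rewrite them as polynomials in $\lambda_i,\mu_i$; your treatment of $\HH_3,\HH_4$ via the residual $\Z/2$-action and the invariant rings $k[\ovr^2,s]$, $k[\ovp^2,t]$ is exactly the content of that omitted standard computation.
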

\begin{proof}
Suppose $i \in \{1,2,3,4\}$. The fact that $\lambda_i$ and $\mu_i$ are regular on 
$\HH_i$ and define an isomorphism to $\A^2$ is a standard computation. So is the 
last statement.
Using Lemma~\ref{CQ4A2}, one can express $\fa_i,\fb_i,\fc_i$ as polynomials
in $\lambda_i$ and $\mu_i$, which shows that $\fa_i,\fb_i,\fc_i$ are regular as well.
\end{proof}
For each $i \in \{1,2,3,4\}$, set $\C_Q^i(5) = \pC_Q(5) \cap \HH_i$. 
The union of the four affine curves $\C_Q^i(5)$, with $1 \leq i \leq 4$, is $\pC_Q(5)$. 
Note that $\HH_1 = \A^2(p,q)$ and $\C_Q^1(5) = \C_Q(5)$. 
The affine curve $\C_Q^2(5)$ coincides with the affine part with $\ovp\neq 0$ of the curve in $\P(1,2,1)$ given by \eqref{completion}; the affine coordinates $(\lambda_2,\mu_2)$ correspond with $(\ovr/\ovp,\ovq/\ovp^2)$. By abuse of notation, we will denote the restrictions of 
$\lambda_i$, $\mu_i$, $\fa_i$, $\fb_i$, and $\fc_i$ to $\C_Q^i(5)$ by the same symbol. 

\subsection{Extending $\gamma$} \label{extendinggamma}
We define the conic bundles
\begin{align*}
\Delta_1 &= \HH_1 \times \P^1(z,w), \\
\Delta_2 &= \HH_2 \times \P^1(z',w'), \\ 
\Delta_3 & \subset \HH_3 \times \P^2(u_0,u_1,u_2) \quad  \mbox{ given by } \,\,\ovr^2u_0u_2=\ovq u_1^2, \quad \mbox{and} \\
\Delta_4 & \subset \HH_4 \times \P^2(u'_0,u'_1,u'_2) \quad \mbox{ given by }\,\, \ovp^2u'_0u'_2=\ovq {u'_1}^2
\end{align*}
over $\HH_1$, $\HH_2$, $\HH_3$, and $\HH_4$, respectively.
We glue these conic bundles to a conic bundle $\Delta$ over $\HH$ as follows. 
We glue $\Delta_1$ and $\Delta_2$ above the intersection $\HH_1 \cap \HH_2$
by identifying $(z:w)\in \P^1(z,w)$ with $(\ovp z:\ovr w)\in \P^1(z',w')$.
We also glue $\Delta_1$ and $\Delta_3$ above the intersection 
$\HH_1 \cap \HH_3$ by identifying $(z:w)\in \P^1(z,w)$
with $(\ovq z^2: \ovr^2zw: \ovr^2w^2) \in \P^2(u_0,u_1,u_2)$.  
We glue $\Delta_3$ and $\Delta_4$ above $\HH_3 \cap \HH_4$ 
by identifying $(u_0:u_1:u_2)\in \P^2(u_0,u_1,u_2)$ with 
$(tu_0:su_1:tu_2)\in \P^2(u'_0,u'_1,u'_2)$.  
One easily checks that these identifications also induce an isomorphism 
between the parts of $\Delta_i$ and $\Delta_j$ above the intersection 
$\HH_i \cap \HH_j$ for the remaining pairs $(i,j) \in \{(1,4),(2,3),(2,4)\}$. 

The map $\gamma \colon \C_Q(5) \times \P^1 \to \P$ extends to 
$\C_Q(4) \times \P^1 = \A^2 \times \P^1 = \HH_1\times \P^1 =\Delta_1$
by setting $\gamma(P,R) = \chi(R)$ where, for any field extension $\ell$ of $k$, 
we have $R \in \P^1(\ell)$, and the section $\chi \in \C(\ell)$ of $\varphi$ 
corresponds to $P \in \C_Q(4)(\ell)$. 
The extended map, also denoted by  $\gamma$, sends $\big(P,(z:w)\big) \in \C_Q(4) \times \P^1$ to $(x:y:z:w)$
with $x$ and $y$ as in \eqref{xy}, with $(p,q) = (p(P), q(P)) = (\lambda_1(P),\mu_1(P))$, 
and with $a,b,c$ as in Lemma \ref{CQ4A2}.
The following proposition shows that $\gamma$ extends to a morphism 
$\Delta \to \P$. 

\begin{prop}\label{propgammaext}
The map $\gamma$ extends to a morphism $\Delta \to \P$ that is given on 
$\Delta_2$ by sending $( P, (z':w'))$ to 
$$
(\mu_2(P) z'^2+ z'w' + x_0w'^2 : \fc_2(P) z'^3+\fb_2(P)z'^2w'+\fa_2(P)z'w'^2+y_0w'^3 : \lambda_2(P) z':w'),
$$
on $\Delta_3$ by sending $(P, (u_0:u_1:u_2))$ to 
$$
(u_2(u_0+\lambda_3(P)u_1+x_0u_2):u_2(\fc_3(P)u_0u_1+\fb_3(P)u_0u_2+\fa_3(P)u_1u_2+y_0u_2^2):u_1:u_2),
$$
and on $\Delta_4$ by sending $(P, (u'_0:u'_1:u'_2))$ to 
$$
(u'_2(u'_0+u'_1+x_0u'_2) : u'_2(\fc_4(P)u'_0u'_1+\fb_4(P)u'_0u'_2+\fa_4(P)u'_1u'_2+y_0{u'_2}^2) : \lambda_4(P)u'_1 : u'_2).
$$
\end{prop}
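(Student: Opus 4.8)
The plan is to regard the three displayed formulas, together with the formula already fixed on $\Delta_1$, as local representatives of one map, and to establish two things: that they agree on overlaps, so that they patch to a single rational map $\Delta \dashrightarrow \P$ restricting to $\gamma$ on $\Delta_1$; and that this map is everywhere regular, so that it is a morphism. The first is a compatibility computation, the second a base-point analysis.

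For compatibility I would go overlap by overlap, feeding in the gluing data that defines $\Delta$. On $\HH_1 \cap \HH_2$ I substitute $z'=\ovp z$, $w'=\ovr w$ into the $\Delta_2$-formula, rewrite $\lambda_2,\mu_2,\fa_2,\fb_2,\fc_2$ via \eqref{fraks} and $p=\ovp/\ovr$, $q=\ovq/\ovr^2$, and check that the resulting quadruple is $(\ovr^2,\ovr^3,\ovr,\ovr)$ times the $\Delta_1$-quadruple $(x:y:z:w)$ of \eqref{xy}; as $\P$ has weights $(2,3,1,1)$, the two represent the same point. On $\HH_1 \cap \HH_3$ I substitute the Veronese relation $(u_0:u_1:u_2)=(\ovq z^2:\ovr^2 zw:\ovr^2 w^2)$ and check the quadruple is $((\ovr^2 w)^2,(\ovr^2 w)^3,\ovr^2 w,\ovr^2 w)$ times the $\Delta_1$-quadruple, and on $\HH_3 \cap \HH_4$ I substitute $(u_0':u_1':u_2')=(tu_0:su_1:tu_2)$ with $p=s/t$ and recover $(t^2,t^3,t,t)$ times the $\Delta_3$-quadruple. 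Since the scaling factor is a nonzero rational function in each case, the four formulas define one rational map; the remaining overlaps follow formally because the $\HH_i$ cover $\HH$ and the identifications are compatible.

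For regularity I would first note that over $\HH_1$ the map is already a morphism: its last two coordinates are $z$ and $w$, which never vanish simultaneously on $\P^1(z,w)$. It thus remains to treat the points of $\Delta$ over $\{\ovr=0\}$, which is exactly where the conic fibers of $\Delta_3,\Delta_4$ degenerate. Here I would evaluate the regular functions $\fa_i,\fb_i,\fc_i$ of Lemma~\ref{CQ4A2} at the relevant limit points in the affine coordinates of Lemma~\ref{lem:affcoords}. Over $(1:0:0)\in\HH$, which lies only in $\HH_2$, the single candidate common zero is the fiber point $(z':w')=(1:0)$, where the quadruple collapses to $(\mu_2:\fc_2:0:0)$ and $\fc_2$ takes the value $-\phi_4/(2y_0\phi_2^2)$ at $\lambda_2=\mu_2=0$; over the exceptional curve of $\HH\to\P(1,2,1)$ the degenerate fibers are the lines $u_1=0$, on which the candidate common zero is $(u_0:u_1:u_2)=(1:0:0)$ and one must read off the leading behaviour of $\fb_3,\fc_3$ as $\mu_3\to0$.

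I expect this last regularity check to be the main obstacle. The patching step is only careful bookkeeping with the weighted rescalings, but at the boundary the conic bundle is non-reduced, and in fact $\Delta$ acquires a singularity at each point $(u_0:u_1:u_2)=(1:0:0)$ over the exceptional curve, where the coordinates of a single chart share a common factor and collapse to $(0:0:0:0)$. Resolving these points—by combining charts, or by locating the offending fibers relative to $\pC_Q(5)$ using the defining equation \eqref{completion}—is the delicate part, and it is exactly where the standing assumption $\phi_2\ne0$ (equivalently $y_0\ne0$) and the torsion order of $Q$ (through $\phi_3,\phi_4$) come into play.
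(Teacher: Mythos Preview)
Your compatibility check on the overlaps is fine and in fact more explicit than the paper, which dismisses it as ``easy to check''. The $\Delta_2$ regularity analysis is also on the right track: the only candidate base point is at $(\lambda_2,\mu_2)=(0,0)$, $(z':w')=(1:0)$, and there the first two coordinates reduce to $(\mu_2,\fc_2)=(0,-\phi_4/(2y_0\phi_2^2))$. To finish you still need the contradiction: if $\phi_4=0$ then, using that $(1:0:0)$ lies on $\pC_Q(5)$ (equation \eqref{completion}), one gets $c_5=0$, hence $\phi_3^2=\phi_4\psi=0$, contradicting that $Q$ cannot have order both $3$ and $4$. You stop just short of this.

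The genuine gap is at $\Delta_3$ and $\Delta_4$. Your plan is to ``read off the leading behaviour of $\fb_3,\fc_3$ as $\mu_3\to 0$'' and then resolve the residual base locus ``by combining charts, or by locating the offending fibers relative to $\pC_Q(5)$''. Neither will work. At a point $(P,(1:0:0))$ with $\mu_3(P)=0$ the four coordinates vanish because $u_1=u_2=0$, \emph{independently of the values of $\fa_3,\fb_3,\fc_3$}; no amount of information about those coefficients removes the common zero. Combining charts does not help either: the singular points of $\Delta$ over the exceptional curve of $\HH\to\P(1,2,1)$ lie only in $\Delta_3\cup\Delta_4$, and the $\Delta_4$ formula has the same problem at $(1:0:0)$. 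Restricting to $\pC_Q(5)$ via \eqref{completion} does not make the base point disappear, since whether or not $P$ satisfies that equation is irrelevant to the vanishing of the $u$-coordinates.

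The paper's device is to postcompose with the weighted $6$-uple embedding $\P(2,3,1,1)\hookrightarrow\P^{22}$. The $23$ degree-$6$ monomials in $x,y,z,w$, after substituting the $\Delta_3$ formula and using the conic relation $u_1^2=\mu_3 u_0u_2$, are all divisible by $u_2^3$; dividing out, the coordinate corresponding to $x^3$ becomes $(u_0+\lambda_3 u_1+x_0 u_2)^3$, which equals $u_0^3\neq 0$ at $(1:0:0)$. This exhibits the extended map as a genuine morphism at the bad points, and the same trick handles $\Delta_4$. This is the missing idea in your proposal.
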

\begin{proof}
It is easy to check that the given maps coincide with $\gamma$ wherever they are well defined. Hence, it suffices to show that they are well defined on the claimed subsets. 

Suppose the first map is not well defined at a point $( P, (z':w')) \in \Delta_2$. 
By Lemma \ref{lem:affcoords}, the functions
$\lambda_2,\mu_2,\fa_2,\fb_2,\fc_2$ are all regular at $P$, so 
the fact that the map is not well defined at $( P, (z':w'))$ implies that the four given polynomials that are claimed to define the map on $\Delta_2$ vanish. 
This yields $w'=0$, so $z'\neq 0$, and thus $\lambda_2,\mu_2$, and $\fc_2$ all vanish at $P$.  From Lemma~\ref{CQ4A2} and $\lambda_2(P)=\mu_2(P)=0$, we obtain 
$\fc_2(P) = -\phi_4/(2y_0\phi_2^2)$, so the vanishing of $\fc_2$ 
at $P$ gives $\phi_4=0$. 
From $\lambda_2(P)=\mu_2(P)=0$ and equation \eqref{completion}, we find $c_5=0$, so we also have $\phi_3^2 = \phi_4\psi=0$. This is a contradiction as $Q$ can not have both order $3$ and $4$ on $\E_0^\ns(k)$. Hence, the first map is well defined on $\Delta_2$. 

It is clear that the second map is well defined at any point 
$( P, (u_0:u_1:u_2)) \in \Delta_3$ with $u_1\neq 0$ or $u_2 \neq 0$. 
To see that it is also well defined at points with $(u_0:u_1:u_2)= (1:0:0)$, we identify $\P$ with its image under the closed immersion to $\P^{22}$ corresponding to $\O(6)$ on $\P$. Substituting the expressions for the second map into the $23$ monomials of weighted degree $6$ in the variables $x$, $y$, $z$, and $w$ gives $23$ polynomials of total degree $6$ in $u_0,u_1,u_2$, which after replacing $u_1^2$ by $\mu_3u_0u_2$ (the conic bundle $\Delta_3$ is given by $\mu_3u_0u_2=u_1^2$) are all divisible by $u_2^3$. 
The composition $\Delta_3 \to \P \to \P^{22}$ is given by these $23$ polynomials, each divided by $u_2^3$. 
The coordinate corresponding to the monomial $x^3$ is given by $(u_0+\lambda_3u_1+x_0u_2)^3$, which does not vanish at $(P,(1:0:0))$,
so this composition, and thus the map $\Delta_3 \to \P$, is well defined.  

The third map is well defined whenever $u_2'\neq 0$. On the other hand, if $u_2' = 0$, then also $u_1'=0$, and one uses the composition $\Delta_4 \to \P \to \P^{22}$ to check that the map $\Delta_4 \to \P$ is well defined at points with $(u'_0:u'_1:u'_2)= (1:0:0)$, as in the previous case. 
\end{proof}

Let $\Gamma$ be the inverse image of $\pC_Q(5)$ under the map $\Delta \to \HH$. 
We denote the restriction $\Gamma \to \pC_Q(5)$ of the conic bundle $\Delta \to \HH$ by $\tau$. By abuse of notation, we denote the restriction $\Gamma \to \P$ of the map $\gamma \colon \Delta \to \P$ by $\gamma$ as well. 

\subsection{The limit curves and their images}
Set $\Omega = \pC_Q(5) - \C_Q(5)$.  Then the limit curves described in the beginning of this section are the images under $\gamma$ of the fibers of $\tau$ above the points in $\Omega$. 
These images are described in Lemmas \ref{curvesinf}, \ref{L:E0node}, \ref{L:E0cusp}, and \ref{imagetangent}. Recall that $\C_Q^2(5)$ can be identified with the open subset of 
$\P(1,2,1)$ given by $\ovp \neq 0$.

\begin{lemma}\label{curvesinf}
Each point $P \in \C^2_Q(5)-\C_Q(5)$ corresponds to $(\ovp:\ovq:\ovr) = (1:\alpha:0)$ 
for some $\alpha \in \kbar$ satisfying $c_1\alpha^2+c_2\alpha-c_5$;
the map $\gamma$ sends $(P, (z':w')) \in \Gamma$ to $(x:y:z:w)$ with 
$$
\left\{
\begin{array}{rl}
x&=\alpha {z'}^2+ z'w' + x_0{w'}^2,\\
y&=y_0 \left( \frac{4\alpha\phi_2\phi_3-2\phi_4}{\phi_2^3}{z'}^3 +
\frac{\alpha\psi\phi_2+2\phi_3}{\phi_2^2}{z'}^2w'+
\frac{\psi}{\phi_2}z'{w'}^2 +{w'}^3  
\right),\\
z&=0,\\
w&=w',
\end{array}
\right.
$$
and the image of the fiber $\tau^{-1}(P) \subset \Gamma$ 
under $\gamma$ is a curve in $\P$ of degree $6$ that intersects $S$ at $Q$ with 
multiplicity at least $5$. 
\end{lemma}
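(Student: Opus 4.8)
The plan is to establish the three assertions separately, using throughout that $\C^2_Q(5)$ is the chart $\ovp \neq 0$ of the plane curve \eqref{completion} and that the point $(0:1:0)$ blown up to form $\HH$ has $\ovp = 0$, hence lies outside this chart. To locate the boundary points first, note that $\C_Q(5) = \C^1_Q(5) = \pC_Q(5) \cap \HH_1$ is the locus $\ovr \neq 0$, so every $P \in \C^2_Q(5) - \C_Q(5)$ satisfies $\ovr = 0$ and $\ovp \neq 0$. Normalizing $\ovp = 1$ and setting $\ovr = 0$ in \eqref{completion} collapses it to $c_1\ovq^2 + c_2\ovq - c_5 = 0$, so $P = (1:\alpha:0)$ with $\alpha = \ovq$ a root of $c_1\alpha^2 + c_2\alpha - c_5$; since $c_1, c_2, c_5$ do not all vanish, there are at most two such $\alpha \in \kbar$.

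Next I would read off the image of the fibre $\tau^{-1}(P)$ by specializing the formula of Proposition \ref{propgammaext} on $\Delta_2$ to $P = (1:\alpha:0)$, where $\lambda_2(P) = \ovr/\ovp = 0$ and $\mu_2(P) = \ovq/\ovp^2 = \alpha$. This immediately gives $z = \lambda_2(P)z' = 0$, $w = w'$, and $x = \alpha z'^2 + z'w' + x_0 w'^2$. The functions $\fa_2, \fb_2, \fc_2$ are regular at $P$ by Lemma \ref{lem:affcoords}; expressing them as polynomials in $\lambda_2, \mu_2$ via Lemma \ref{CQ4A2} and \eqref{fraks} and evaluating at $(\lambda_2, \mu_2) = (0, \alpha)$ yields $\fa_2(P) = \psi/(4y_0)$, $\fb_2(P) = (\alpha\psi\phi_2 + 2\phi_3)/(4y_0\phi_2)$, and $\fc_2(P) = (2\alpha\phi_2\phi_3 - \phi_4)/(2y_0\phi_2^2)$. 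The stated coefficients of $y$ then follow after clearing denominators with the identity $\phi_2 = \Phi_2(x_0) = 4(x_0^3 + f_0x_0 + g_0) = 4y_0^2$, valid because $Q \in S$.

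Finally I would treat the image curve $L = \gamma(\tau^{-1}(P))$. Its degree is $6$ by the reasoning of Proposition \ref{CSdeg6}: $L$ is the image of $\P^1$ under a map given by forms of weighted degrees $2,3,1,1$ (with the $z$-coordinate identically zero) having no common factor and birational onto its image, so the induced map to $\P^{22}$ under the $\O(6)$-embedding is given by sextics with degree-$6$ image. For the intersection multiplicity I would argue by specialization rather than direct expansion. For each $P' \in \C_Q(5) \cap \HH_2$ the associated section meets $S$ at $Q$ with multiplicity at least $5$, and $Q$ is the image of $(z':w') = (0:1)$; hence the pullback $H_{P'}$ of $-y^2 + x^3 + f(z,w)x + g(z,w)$ along the $\Delta_2$-parametrization is a binary sextic divisible by $z'^5$ (concretely $H_{P'} = p^{-5}F_5 {z'}^5 w' + p^{-6}F_6 {z'}^6$, using $F_1 = \cdots = F_4 = 0$ on $\C_Q(5)$ and the reparametrization $z = z'/p$). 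The coefficients of $H_{P'}$ are regular functions of $\lambda_2, \mu_2, \fa_2, \fb_2, \fc_2$, so $P' \mapsto H_{P'}$ extends to a morphism on all of $\C^2_Q(5)$; the locus of sextics divisible by $z'^5$ is closed, and $P$ lies in the closure of $\C_Q(5) \cap \HH_2$, being a non-isolated point of the curve $\C^2_Q(5)$ meeting the line $\ovr = 0$ in finitely many points. Therefore the pullback along $\gamma|_{\tau^{-1}(P)}$ is again divisible by $z'^5$, which is exactly the assertion that $L$ meets $S$ at $Q$ with multiplicity at least $5$.

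The step I expect to be the main obstacle is this last intersection statement, and specifically the rigour of the specialization: one must confirm that $P$ genuinely is a limit of points of $\C_Q(5) \cap \HH_2$ (so that the closed condition $z'^5 \mid H_{P'}$ propagates to $P$) and that the sextic obtained in the limit is the pullback along the honest limit curve $\gamma(\tau^{-1}(P))$ of the statement rather than along some further degeneration. The alternative of substituting the explicit $x$ and $y$ of the statement directly into $-y^2 + x^3 + f_0 x + g_0$ and checking vanishing to order $5$ in $z'$ is feasible, but then the difficulty migrates into carrying the defining relations of $\C_Q(5)$ through the rescaling $z = z'/p$ as $p \to \infty$.
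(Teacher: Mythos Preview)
Your proposal is correct and follows essentially the same route as the paper: locate the boundary points by setting $\ovr=0$ in \eqref{completion}, evaluate $\fa_2,\fb_2,\fc_2$ at $(\lambda_2,\mu_2)=(0,\alpha)$ via Lemma~\ref{CQ4A2} and use $\phi_2=4y_0^2$, then establish the intersection multiplicity by specialization. The only substantive difference is the degree computation: the paper intersects the image $D$ inside the plane $\{z=0\}\cong\P(2,3,1)$ with a generic curve $D'$ of the form $y=\lambda xw+\mu w^3$ and applies the weighted B\'ezout identity $(\deg D)(\deg D')=6\cdot\#(D\cap D')$, which sidesteps the need to verify birationality or the absence of basepoints in your parametrization argument. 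On your final concern, the paper's justification for the limit step is precisely that $\deg D=6$: since this equals the degree of the generic member, the limit curve is a ``full limit'' with no component lost, so the closed condition of meeting $S$ at $Q$ to order $\geq 5$ passes to it; the paper also notes, as you do, that one may instead substitute directly and verify the vanishing of the coefficients of ${z'}^i{w'}^{6-i}$ for $0\leq i\leq 4$.
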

\begin{proof}
Let $P \in \C^2_Q(5)-\C_Q(5)$.
The first part of the statement is obvious. 
We have $\lambda_2(P) =0$ and $\mu_2(P) = \alpha$. 
From Lemma \ref{CQ4A2} we deduce 
$$
\fa_2(P) = \frac{\psi}{4y_0}, \qquad \fb_2(C) = \frac{\psi\phi_2\alpha+2\phi_3}{4y_0\phi_2}, \qquad 
\fc_2(P) = \frac{2\phi_2\phi_3\alpha-\phi_4}{2y_0\phi_2^2}.
$$
Hence, according to Proposition \ref{propgammaext}, 
the map $\gamma$ sends $(P, (z':w')) \in \Delta_2$ to $(x:y:0:w')$ with 
$x =\alpha {z'}^2+z'w'+x_0{w'}^2$ and $y= \fc_2(P){z'}^3+\fb_2(P){z'}^2w'+\fa_2(P)z'{w'}^2+y_0{w'}^3$. From $4y_0^2 = \phi_2$, 
it follows that the latter equals the expression given for $y$ in the lemma. 

The curve $D=\gamma(\tau^{-1}(P))$ in $\P$ lies inside the hyperplane given by $z=0$, which is
isomorphic to the weighted projective space $\P(2,3,1)$. The intersection of $D$ with the curve $D'$ in this hyperplane given by $y=\lambda xw + \mu w^3$ yields three intersection points for general $\lambda$ and $\mu$. 
B\'ezout's Theorem tells us that the product of the weights $(2,3,1)$ times this intersection number $3$ equals $(\deg D)(\deg D')$, so we find $\deg D = 18/\deg D'=6$. 

Since the degree of $D$ is $6$, it is a full limit of images under $\gamma$ of fibers of $\tau \colon \C_Q(5) \times \P^1 \to \C_Q(5)$, all of which intersect $S$ at $Q$ with multiplicity at least $5$, so $D$ does this as well. This can also be checked computationally by substituting the parametrization given in the lemma into the polynomial 
$$
-y^2 + x^3 + f(z,w)x+g(z,w),
$$
and checking that the coefficients of ${z'}^i{w'}^{6-i}$ vanish for $0 \leq i \leq 4$.
\end{proof}

Recall that $S_0$ is the image of $\E_0$ on $S$, which is the intersection of $S$ 
with the plane given by $z=0$. 
The following two lemmas give more information about the image under $\gamma$ of the fibers of $\tau$ above points 
in $\C^2_Q(5)-\C_Q(5)$ in the case that $S_0$ is singular. In particular, they show that $S_0$ is one of the limit curves in this case. 

\begin{lemma}\label{L:E0node}
Suppose $\E_0$ has a node. Then $\C^2_Q(5) - \C_Q(5)$ contains the point
$P_1=(1:\alpha_1:0) \in \P(1,2,1)$ with
$$
\alpha_1 = \frac{f_0}{4(f_0x_0-3g_0)}.
$$
The map $\gamma$ restricts to a birational morphism from 
the fiber $\tau^{-1}(P_1)$ to $S_0$. If $\phi_3 = 0$, then $P_1$ is the only point in $\C^2_Q(5) - \C_Q(5)$.
If $\phi_3 \neq 0$, then $\C^2_Q(5) - \C_Q(5)$ contains a unique second point 
$P_2=(1:\alpha_2:0) \in \P(1,2,1)$ with 
$$
\alpha_2 = \frac{f_0(2f_0x_0-21g_0)}{4(f_0x_0-3g_0)(2f_0x_0-9g_0)};
$$ 
the image under $\gamma$ of the 
fiber $\tau^{-1}(P_2)$ is not contained in $S$.
\end{lemma}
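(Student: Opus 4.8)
The plan is to exploit the node of $\E_0$ to factor every relevant invariant completely, so that the first three assertions reduce to inspection of explicit factored expressions, and to settle $D_{P_2}\not\subseteq S$ by a normalization argument rather than by a second coefficient computation. A node of $\E_0$ means the cubic $x^3+f_0x+g_0$ has a double but no triple root, so $f_0,g_0$ are nonzero; writing $e=-3g_0/(2f_0)\neq 0$ for the double root we have $x^3+f_0x+g_0=(x-e)^2(x+2e)$, i.e.\ $f_0=-3e^2$, $g_0=2e^3$, and $x_0\notin\{e,-2e\}$ since $y_0\neq0$. Substituting these into the formulas of Section~\ref{multisection} gives
$$
\phi_2=4(x_0-e)^2(x_0+2e),\quad \psi=6(x_0-e)(x_0+e),\quad \phi_3=3(x_0-e)^3(x_0+3e),\quad \phi_4=2(x_0-e)^5(x_0+5e),
$$
so that $c_1=\phi_2^2\phi_3$, $c_2=-3\phi_2\phi_4$ and $c_5=\phi_3^2-\psi\phi_4$ all split into products of linear forms in $x_0$; one also rewrites $\alpha_1=f_0/\big(4(f_0x_0-3g_0)\big)$ as $1/\big(4(x_0+2e)\big)$, whence $f_0x_0-3g_0=-3e^2(x_0+2e)\neq0$ and $\alpha_1$ is well defined.

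I would then verify that $P_1$ lies in $\C_Q^2(5)-\C_Q(5)$ and that its limit curve is $S_0$. By Lemma~\ref{curvesinf} the points of $\C_Q^2(5)-\C_Q(5)$ are exactly the $(1:\alpha:0)$ with $c_1\alpha^2+c_2\alpha-c_5=0$, and the limit curve $\gamma(\tau^{-1}(P))$ lies in the plane $z=0$ and is contained in $S$ precisely when the two remaining coefficients, of $z'^5w'$ and $z'^6$ in the pullback of \eqref{maineq}, vanish (those of $z'^iw'^{6-i}$ for $0\le i\le4$ vanish automatically). Substituting $\alpha_1=1/(4(x_0+2e))$ into the factored quadratic, the bracket collapses to $0$, so $P_1\in\C_Q^2(5)-\C_Q(5)$; substituting the parametrization of Lemma~\ref{curvesinf} at $\alpha_1$ and using $y_0^2=(x_0-e)^2(x_0+2e)$ makes both remaining coefficients vanish, so $D_{P_1}=S_0$. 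To get birationality I would compose $\gamma|_{\tau^{-1}(P_1)}$ with the degree-$2$ projection $\pi_x\colon S_0\to\P^1$, $(x:y:w)\mapsto(x:w^2)$: the composite is $(z':w')\mapsto(\alpha_1z'^2+z'w'+x_0w'^2:w'^2)$, of degree $2$ since $\alpha_1\neq0$, so $\gamma|_{\tau^{-1}(P_1)}$ has degree $1$.

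Next I would split into cases. If $\phi_3=0$ then $c_1=0$, while $\phi_3=0$ forces $x_0=-3e$, giving $x_0+5e=2e\neq0$ and hence $c_2=-24(x_0-e)^7(x_0+2e)(x_0+5e)\neq0$; thus $c_1\alpha^2+c_2\alpha-c_5$ is linear with unique root $\alpha_1$, so $P_1$ is the only point of $\C_Q^2(5)-\C_Q(5)$. If $\phi_3\neq0$ then $c_1\neq0$ and the quadratic has a second root, which by Vi\`ete equals $-c_5/(c_1\alpha_1)=(x_0+7e)/\big(4(x_0+2e)(x_0+3e)\big)$; rewriting this in terms of $f_0,g_0$ recovers the stated $\alpha_2$, well defined because $\phi_3\neq0$ gives $x_0\neq-3e$. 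Moreover $\alpha_1\neq\alpha_2$, since their equality would force $e=0$.

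The heart of the argument is showing $D_{P_2}\not\subseteq S$, which I would do geometrically. Suppose $D_{P_2}\subseteq S$; being contained in the plane $z=0$ it then lies in $S_0$, and as the image of the irreducible $\tau^{-1}(P_2)\cong\P^1$ under a nonconstant morphism it equals $S_0$. Composing the resulting finite surjection with $\pi_x$ yields $(z':w')\mapsto(\alpha_2z'^2+z'w'+x_0w'^2:w'^2)$, whose degree is a positive multiple of $\deg\pi_x=2$ and hence is $2$, so $\gamma|_{\tau^{-1}(P_2)}$ is birational. Then $\theta:=(\gamma|_{\tau^{-1}(P_1)})^{-1}\circ\gamma|_{\tau^{-1}(P_2)}$ is an automorphism of $\P^1$ satisfying $x_{\alpha_1}\circ\theta=x_{\alpha_2}$, where $x_\alpha(z':w')=(\alpha z'^2+z'w'+x_0w'^2:w'^2)$. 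The point $(1:0)$ is the unique, totally ramified preimage of $\infty$ under each $x_{\alpha_i}$, so $\theta$ fixes $(1:0)$ and is affine; comparing the three coefficients of $x_{\alpha_1}\circ\theta=x_{\alpha_2}$ forces $\alpha_1=\alpha_2$, a contradiction. The main obstacle is exactly this last step: the purely computational alternative, showing the $z'^5w'$- and $z'^6$-coefficients cannot both vanish at $\alpha_2$, needs a resultant computation and extra care in small characteristic, whereas the normalization argument is clean and characteristic-free, its only input being the distinctness $\alpha_1\neq\alpha_2$.
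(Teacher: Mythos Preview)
Your argument is correct and largely parallels the paper's: you pass to the double root $e$ (the paper's $d$), factor $\phi_2,\phi_3,\phi_4$ and hence $c_1,c_2,c_5$ completely, and read off the roots $\alpha_1,\alpha_2$ of $c_1T^2+c_2T-c_5$. For the birationality of $\gamma|_{\tau^{-1}(P_1)}$ the paper instead exhibits an explicit factorization $\P^1\stackrel{\sim}{\to}\P^1\to S_0$ through the standard normalization; your degree count via the $x$-projection $\pi_x$ is an equally short alternative. The genuine difference is in the final assertion. The paper simply substitutes the parametrization of Lemma~\ref{curvesinf} at $\alpha=\alpha_2$ into the equation of $S$ and finds, after clearing a nonzero factor, the expression $z'^5\big(\phi_5\,z'+(x_0-e)^6\phi_2\phi_3\,w'\big)$, which does not vanish identically since $\phi_2\phi_3\neq 0$; this requires the factored form of $\Phi_5$. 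Your normalization argument sidesteps that computation entirely: assuming $D_{P_2}=S_0$, both $\gamma|_{\tau^{-1}(P_i)}$ are normalizations of $S_0$ and differ by some $\theta\in\Aut(\P^1)$ fixing $(1:0)$, and then writing $\theta(t)=at+b$ and equating coefficients in $\alpha_1(at+b)^2+(at+b)=\alpha_2t^2+t$ forces $b(\alpha_1 b+1)=0$ and, in either case, $a^2=1$ and $\alpha_1=\alpha_1 a^2=\alpha_2$. It would be worth spelling out that two-case check, since it is the only nontrivial content of the last step. The trade-off is that the paper's computation also yields the exact intersection cycle $D_{P_2}\cdot S_0=5(Q)+(-5Q)$, but that is recovered independently in Proposition~\ref{imageinf} and is not needed here.
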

\begin{proof}
Since $\E_0$ has a node, we have $4f_0^3+27g_0^2=0$ with $f_0,g_0 \neq 0$, so for 
$d = -\frac{3g_0}{2f_0}$ we have $f_0 = -3d^2$ and $g_0 = 2d^3$. The curve $\E_0\isom S_0$ is given 
by $y^2 = (x-d)^2(x+2d)$, and we have 
\begin{align*}
\Phi_2 &= 4(x-d)^2(x+2d), \\
\Phi_3 &= 3(x-d)^3(x+3d), \\
\Phi_4 &= 2(x-d)^5(x+5d), \\
\Phi_5 &= (x-d)^{10}(5x^2+50dx+89d^2).
\end{align*}
If $\phi_3 \neq 0$, then $c_1 \neq 0$ and the polynomial $c_1T^2+c_2T-c_5$ 
factors as $c_1(T-\alpha_1)(T-\alpha_2)$ with $\alpha_1=\frac{1}{4}(x_0+2d)^{-1}$ and 
$\alpha_2 = \frac{1}{4}(x_0+7d)(x_0+2d)^{-1}(x_0+3d)^{-1}$, which equal the expressions 
given in the proposition. If $\phi_3=0$, then $c_1=0$ and $x_0=-3d$, so the only root of 
$c_1T^2+c_2T-c_5$ is $\alpha_1 = c_5/c_2 = -\frac{1}{4}d^{-1}$, which equals the expression for $\alpha_1$ 
given in the proposition. It follows from Lemma \ref{curvesinf} that the 
points in $\C^2_Q(5) - \C_Q(5)$ are as claimed. 

It follows from Lemma \ref{curvesinf} and the identities above that the restriction 
of $\gamma$ to $\tau^{-1}(P_1) = \{P_1\} \times \P^1(z',w')$ factors as the composition of the isomorphism 
$$
\{P_1\} \times \P^1(z',w') \to \P^1, \quad (P_1,(z':w')) \mapsto ((x_0-d)(z'+2(x_0+2d)w') : 2y_0w')
$$ 
and the birational morphism 
$$
\P^1 \to S_0, \quad (s:1) \mapsto (s^2-2d : s^3-3ds: 0: 1).
$$
This proves the second statement.

For the last statement, we assume $\phi_3 \neq 0$, take $\alpha = \alpha_2$ and substitute the 
corresponding parametrization of Lemma \ref{curvesinf} in the equation 
$$
-y^2 + x^3 +f(z,w)x+g(z,w) =0, 
$$  
which defines $S$. The obtained equation in $z'$ and $w'$, multiplied by 
$$-16d^{-3}(x_0-d)^{10}(x_0+2d)^5(x_0+3d)^3,$$
is
$$
z'^5\big(\phi_5\cdot z'+(x_0-d)^6\phi_2\phi_3 \cdot w'\big) = 0.
$$
As the left-hand side does not vanish identically, 
the curve $\gamma(\tau^{-1}(P_2))$ is not contained in $S$.
\end{proof}

\begin{lemma}\label{L:E0cusp}
Suppose that $\E_0$ has a cusp. Then $\pC_Q(5)$ equals $\C_Q(5) \cup \C^2_Q(5) = \C_Q^1(5) \cup \C^2_Q(5)$ and $\C^2_Q(5) - \C_Q(5)$ contains exactly one point, namely 
$P = (2:x_0^{-1}:0) \in \P(1,2,1)$.
The map $\gamma$ restricts to a birational morphism from the fiber $\tau^{-1}(P)$ to $S_0$. 
\end{lemma}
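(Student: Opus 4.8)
The plan is to mirror the structure of the nodal case (Lemma \ref{L:E0node}), exploiting that a cuspidal fiber forces the Weierstrass cubic to have a triple root, which makes the quadratic governing the points at infinity a perfect square. First I would note that $\E_0$, given by $y^2 = x^3 + f_0x + g_0$, is cuspidal precisely when the depressed cubic $x^3 + f_0x + g_0$ has a triple root; since such a cubic can only have a triple root at $x=0$, this is equivalent to $f_0 = g_0 = 0$, and then $S_0$ is the cuspidal cubic $y^2 = x^3$ in the plane $z=0$. Substituting $f_0 = g_0 = 0$ into \eqref{idPhis} gives $\Phi_2 = 4x^3$, $\Psi = 6x^2$, $\Phi_3 = 3x^4$, $\Phi_4 = 2x^6$, and $\Phi_5 = 5x^{12}$, so that $\phi_2 = 4x_0^3$, $\psi = 6x_0^2$, $\phi_3 = 3x_0^4$, $\phi_4 = 2x_0^6$, and $\phi_5 = 5x_0^{12}$. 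The standing hypothesis $y_0 \neq 0$ is equivalent to $\phi_2 = 4x_0^3 \neq 0$, so $x_0 \neq 0$.

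Next I would feed these values into the coefficients of \eqref{CQ5}, obtaining $c_1 = 48x_0^{10}$, $c_2 = -24x_0^9$, and $c_5 = -3x_0^8$. Since $c_1 \neq 0$, the point $Q$ does not have order $3$ in $\E_0^\ns(k)$, so by the discussion preceding \eqref{completion} the curve $\pC_Q(5)$ is the closure of $\C_Q(5)$ inside $\P(1,2,1)$ and avoids the singular point $(0:1:0)$; equivalently, $\pC_Q(5)$ does not meet the exceptional curve of $\HH \to \P(1,2,1)$. Consequently every point of $\pC_Q(5) - \C_Q(5)$ has $\ovr = 0$ and $\ovp \neq 0$, hence lies in $\C^2_Q(5)$, which yields $\pC_Q(5) = \C^1_Q(5) \cup \C^2_Q(5)$. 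By Lemma \ref{curvesinf} the points at infinity correspond to the roots of $c_1\alpha^2 + c_2\alpha - c_5$, and here this polynomial equals $3x_0^8(4x_0\alpha - 1)^2$, with the single root $\alpha = (4x_0)^{-1}$. This is the point $(\ovp:\ovq:\ovr) = (1:(4x_0)^{-1}:0) = (2:x_0^{-1}:0)$, so $\C^2_Q(5) - \C_Q(5) = \{P\}$.

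For the final assertion I would specialise the parametrisation of Lemma \ref{curvesinf} at $\alpha = (4x_0)^{-1}$ using the cusp values above. The $x$-coordinate becomes $\tfrac{1}{4x_0}z'^2 + z'w' + x_0w'^2 = \tfrac{1}{4x_0}(z'+2x_0w')^2$, and the $y$-coordinate becomes $y_0\bigl(\tfrac{z'}{2x_0}+w'\bigr)^3$. Writing the linear form $v = z' + 2x_0w'$ and using $y_0^2 = x_0^3$, the fibre $\tau^{-1}(P)$ is sent to the points $\bigl(v^2/(4x_0) : y_0v^3/(8x_0^3) : 0 : w'\bigr)$, which trace out $S_0 : y^2 = x^3$ in the plane $z=0$. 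Since $(z':w') \mapsto (v:w')$ is a linear automorphism of $\P^1$, and the resulting map $\P^1 \to S_0$ is the normalisation of the cuspidal cubic — an isomorphism away from the cusp, with $v = 0$ mapping to the cusp and $(v:w') = (1:0)$ mapping to $\O$ — it is a birational morphism.

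I expect the only genuine subtlety to be the bookkeeping in the second paragraph: ensuring that $\pC_Q(5)$ stays within the two charts $\HH_1 \cup \HH_2$, which is exactly what $c_1 \neq 0$ secures and which lets us bypass the blow-up of $(0:1:0)$ entirely. Everything else reduces to substituting $f_0 = g_0 = 0$ into formulas already established and recognising the specialised $x$- and $y$-coordinates as a perfect square and a perfect cube, respectively; the geometric conclusion — that the unique limit curve is the cuspidal fiber $S_0$, swept out birationally — is then immediate.
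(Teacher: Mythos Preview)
Your proposal is correct and follows essentially the same approach as the paper: specialise $f_0=g_0=0$, note $x_0\neq 0$ from $y_0\neq 0$, compute $c_1=48x_0^{10}\neq 0$ to conclude $\pC_Q(5)$ avoids the exceptional locus, factor $c_1\alpha^2+c_2\alpha-c_5=3x_0^8(4x_0\alpha-1)^2$ to get the unique boundary point, and then specialise the parametrisation of Lemma~\ref{curvesinf}. Your observation that the $x$- and $y$-coordinates become a perfect square and perfect cube in $z'+2x_0w'$ is exactly the content of the paper's factorisation of $\gamma|_{\tau^{-1}(P)}$ as the isomorphism $(z':w')\mapsto(z'+2x_0w':2x_0w')$ followed by $(s:1)\mapsto(x_0s^2:y_0s^3:0:1)$.
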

\begin{proof}
Since $\E_0$, or equivalently $S_0$, has a cusp, we have $f_0= g_0=0$. The cusp $(0:0:0:1)$ is the only point on $S_0$ with $x$-coordinate $0$, so from $y_0 \neq 0$ we find $x_0 \neq 0$. 
The $\Phi_i$ are as in the proof of Lemma \ref{L:E0node} with $d=0$. 
From $c_1 = 48x_0^{10} \neq 0$ we get $\pC_Q(5) = \C_Q(5) \cup \C^2_Q(5)$.
The polynomial $c_1T^2+c_2T-c_5$ factors as 
$3x_0^8(4x_0T-1)^2$ with the unique root $\alpha = (4x_0)^{-1}$, which 
implies by Lemma \ref{curvesinf} that $P = (2: x_0^{-1}:0)$ is the only point in $\C^2_Q(5) - \C_Q(5)$. 
One checks by a computation that it also follows from Lemma \ref{curvesinf} that 
the restriction of $\gamma$ to $\tau^{-1}(P) = \{P\} \times \P^1(z',w')$ factors as the composition of the isomorphism 
$$
\{P\} \times \P^1(z',w') \to \P^1, \quad (P,(z':w')) \mapsto (z'+2x_0w' : 2x_0w')
$$ 
and the birational morphism 
$\P^1 \to S_0$ that sends $(s:1)$ to $(x_0s^2 : y_0s^3: 0: 1)$. 
This proves the second statement.
\end{proof}

The points of $\pC_Q(5) - \C_Q(5)$ that are not handled by the previous lemmas are the points 
in $\pC_Q(5)  - \big(\C^1_Q(5)\cup \C_Q^2(5)\big)$, that is, the points 
above the singular point $(0:1:0)$ in $\P(1,2,1)$. The next lemma takes care of these points. 

\begin{lemma}\label{imagetangent}
For each point $P \in \pC_Q(5)  - \big(\C_Q^1(5)\cup \C_Q^2(5)\big)$,
the map $\gamma$ sends the fiber $\tau^{-1}(P)$ to the curve in $\P$ given by $z=0$
and $4y_0y=\psi xw + (\phi_2-\psi x_0)w^3$; this curve intersects 
$S$ with multiplicity $3$ at $Q$ and nowhere else. 
\end{lemma}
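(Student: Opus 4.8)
The plan is to locate the points $P$ in question, compute the (degenerate) fibre of $\tau$ over each, push it forward by $\gamma$, and finally intersect the resulting curve with $S$. First I would record a constraint that is the true source of the multiplicity-$3$ assertion. A point $P\in\pC_Q(5)-(\C_Q^1(5)\cup\C_Q^2(5))$ lies in $\HH-(\HH_1\cup\HH_2)=Z(\ovp)\cap Z(\ovr)$, hence above the singular point $(0:1:0)$ of $\P(1,2,1)$, so $\mu_3(P)=\ovr^2/\ovq$ vanishes. By the remark preceding \eqref{completion}, such a point can lie on $\pC_Q(5)$ only if the completion passes through $(0:1:0)$, i.e.\ only if $c_1=\phi_2^2\phi_3=0$; since $\phi_2\neq0$ this forces $\phi_3=0$, so $Q$ has order $3$ in $\E_0^\ns(k)$.

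Next I would compute the image. Working in $\HH_3$ (the chart $\HH_4$ is symmetric, as $\fb_4=b\lambda_4^2\mu_4=b\mu_3=\fb_3$), the conic $\Delta_3$, given by $\mu_3u_0u_2=u_1^2$, degenerates over $P$ to the double line $u_1=0$; restricting the formula of Proposition \ref{propgammaext} to $u_1=0$ annihilates the terms carrying $\lambda_3,\fa_3,\fc_3$ and leaves $\gamma(P,(u_0:0:u_2))=(u_2(u_0+x_0u_2):u_2^2(\fb_3(P)u_0+y_0u_2):0:u_2)$. The only quantity I need is $\fb_3(P)$: from $\fb_3=b\mu_3$ and Lemma \ref{CQ4A2} one gets $4y_0\phi_2\fb_3=\psi\phi_2+(2\phi_3\lambda_3^2+2l_1\lambda_3+2h_2-2h_1^2)\mu_3$, so $\mu_3(P)=0$ gives $\fb_3(P)=\psi/(4y_0)$, independent of $\lambda_3(P)$. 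Thus the image is the line through $Q$ in the plane $z=0$ of slope $\psi/(4y_0)$, which (using $\phi_2=4y_0^2$) is exactly the curve $D$ given by $z=0$ and $4y_0y=\psi xw+(\phi_2-\psi x_0)w^3$; the point $u_2=0$ maps to $(1:0:0:0)$ by the $\P^{22}$-argument of Proposition \ref{propgammaext}, so $\gamma$ sends the fibre birationally onto $D$.

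Finally I would intersect $D$ with $S$. As $D\subset\{z=0\}$ and $S\cap\{z=0\}=S_0$, the scheme $D\cap S$ equals $D\cap S_0$ with the same local multiplicities, so it suffices to intersect the line $D$ with the cubic $S_0\colon y^2=x^3+f_0x+g_0$. Since $\psi/(4y_0)=(3x_0^2+f_0)/(2y_0)$ is precisely $dy/dx$ at $Q$, the line $D$ is the tangent to $S_0$ at $Q$. By the first step $Q$ has order $3$ on $\E_0^\ns$; being neither $\O$ nor $2$-torsion, it is a flex of $S_0$, so its tangent meets $S_0$ only at $Q$, there with multiplicity $3$. (Concretely, substituting $D$ into the cubic yields $(x-x_0)^2(x-x_1)$ with $x_1=(\psi/4y_0)^2-2x_0$, and $(3x_0^2+f_0)^2-12x_0y_0^2=-\phi_3=0$ gives $x_1=x_0$.) As $(1:0:0:0)\notin S$, there is no intersection at infinity, proving the claim.

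The step I expect to be the main obstacle is not any single computation but the logical point in the first paragraph: the multiplicity-$3$ statement is false for a general $Q$ (a tangent line meets the cubic doubly at $Q$ and once elsewhere), so the assertion holds only because the very existence of a point $P$ above $(0:1:0)$ forces $\phi_3=0$ and hence makes $Q$ a flex. Once this is in place, the degenerate-fibre bookkeeping and the one-line limit of $\fb_3$ are routine.
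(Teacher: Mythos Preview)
Your proof is correct and follows essentially the same route as the paper's: locate $P$ above $(0:1:0)$ (forcing $\phi_3=0$), read off the degenerate fibre $u_1^2=0$ in $\Delta_3$, compute $\fb_3(P)=\psi/(4y_0)$ from Lemma~\ref{CQ4A2}, identify the image as the tangent line to $S_0$ at $Q$, and conclude multiplicity~$3$ from $Q$ being a flex. The only organizational difference is that you front-load the deduction $\phi_3=0$ (and rightly flag it as the crux), whereas the paper invokes it at the end; your added explicit check that $x_1=x_0$ and the remark that $(1:0:0:0)\notin S$ are welcome but not strictly needed.
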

\begin{proof}
By Lemma \ref{lem:affcoords}, the open subset $\HH_3$ has affine coordinates $(\lambda_3,\mu_3)$.
If $P$ lies in $\C^3_Q(5)  - \big(\C^1_Q(5)\cup \C_Q^2(5)\big)$, then it corresponds with a point with $(\lambda_3,\mu_3) = (p,0)$ for some $p\in \kbar$ and the fiber  
$\tau^{-1}(P)$ is given by $u_1^2=0$ in $\P^2(u_0,u_1,u_2)$. The map  
$\gamma$ sends $(P,(u:0:1))\in \Delta_3$ to $(u+x_0:\fb_3(P)u+y_0:0:1)$ by Lemma \ref{propgammaext}.
From Lemma \ref{CQ4A2}, we find $4y_0\fb_3(P) = \psi$. 
It follows that the image of the fiber is the claimed curve. 
In the affine plane given by $z=0$ and $w=1$, 
this curve is a line going through $Q$ with slope 
$\fb_3(P) = (3x_0^2+f_0)/(2y_0)$, so it is exactly the tangent line to 
the curve $S_0$ at $Q$. 
Note that the existence of $P$ implies 
that $Q$ has order $3$ on $S_0^\ns(k)$, so this tangent line intersects $S_0$ with multiplicity $3$ 
at $Q$ and nowhere else. As the curve intersects the surface $S$ only in the curve $S_0$, 
the lemma follows.  If $P$ lies in $\C^4_Q(5) - \big(\C^1_Q(5)\cup \C^2_Q(5)\big)$, then the argument is analogous.
\end{proof}

\begin{remark}\label{nonreducedinf}
Scheme theoretically, the image under $\gamma$ of the fiber of $\tau$ above $P$ in Lemma \ref{imagetangent}
is not reduced, but given by $z^2=0$ and $4y_0y=\psi xw + (\phi_2-\psi x_0)w^3$. This nonreduced curve is also a limit curve as mentioned in the beginning of the section, 
and it intersects $S$ with multiplicity $6$ at $Q$. 
\end{remark}

\begin{remark}\label{Tremark}
Let $T$ be the image of $\gamma \colon \Gamma \to \P$. Then $T$ is the union of all curves $C\subset U$ 
corresponding to points $P \in \C_Q(5)$ and the limit curves corresponding to points $P \in \Omega$. 
The closure of the image $\sigma(\C_Q(5))$ in $S$ is contained in the intersection $S \cap T$. This intersection 
also contains all $(-1)$-curves on $S$ that go through $Q$. See also Remarks \ref{whatisthiscurve} and \ref{finalremarkchapter5}.
\end{remark}

The rational map $\sigma \colon \C_Q(5) \dashrightarrow S$ from the end of Section 
\ref{multisection} factors as $\sigma = \gamma \circ \rho$, where $\rho \colon 
\C_Q(5) \dashrightarrow \C_Q(5) \times \P^1(z,w)$ is a rational section of 
$\tau \colon \Gamma \to \pC_Q(5)$ that sends $P\in \C_Q(5)$ to $\big(P,(-F_5(P) : F_6(P))\big)$. 
Here, for $0\leq i \leq 6$, we view $F_i$ as in \eqref{Fis} as a function on $\pC_Q(5)$. 
We use this in Proposition \ref{imageinf} to show that $\sigma$ extends to a map that is 
well defined at every point in $\Omega = \pC_Q(5) - \C_Q(5)$.

$$
\xymatrix{
\pC_Q(5) & \Gamma \ar[l]_(0.4){\tau} \ar[r]^\gamma & \P &S \ar@{_(->}[l] \\
\C_Q(5) \ar@{^(->}[u] \ar@/_5.5mm/@{-->}[rrr]_{\sigma}\ar@/^5.5mm/@{-->}[r]^{\rho} & 
                \C_Q(5) \times \P^1 \ar@{^(->}[u]\ar[l] \ar[r]^(0.63)\gamma & U \ar@{^(->}[u] & 
                                                            S-\{\O\} \ar@{_(->}[l]\ar@{^(->}[u]
}
$$

\begin{prop}\label{imageinf}
The rational map $\sigma$ extends to a rational map $\pC_Q(5) \dashrightarrow S$ 
that is well defined at the points in $\Omega$. For every $P \in \Omega$, we have 
$\sigma(P)=-4Q \in S_0^\ns(k) \subset S$ if $S_0$ has a cusp or $S_0$ has a node and $P=P_1$ as in Lemma \ref{L:E0node}, and we have $\sigma(P)=-5Q \in S_0^\ns(k) \subset S$ otherwise.
\end{prop}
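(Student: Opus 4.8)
The plan is to use the factorization $\sigma = \gamma \circ \rho$ together with the explicit description of the limit curves in Lemmas \ref{curvesinf}, \ref{L:E0node}, \ref{L:E0cusp}, and \ref{imagetangent}. The key point is that $\sigma(P)$ is the sixth intersection point of the limit curve $\gamma(\tau^{-1}(P))$ with $S$, and since a generic member of the family meets $S$ at $Q$ with multiplicity $5$, by continuity (flatness of the intersection along the family) the limit curve meets $S$ at $Q$ with multiplicity at least $5$ as well; the sixth intersection point is the limit of the sixth points for nearby sections, which lies on the fiber $S_0$ because all limit curves are contained in the hyperplane $z=0$. Thus in every case $\sigma(P)$ is a point on $S_0$, and since the six points sum to $\O$ on $\E_0^\ns(k)$ and five of them are at $Q$, we expect $\sigma(P) = -5Q$ when the limit curve meets $S_0$ at $Q$ with multiplicity exactly $5$, and $\sigma(P) = -4Q$ when it meets with multiplicity $6$ (so that the sixth point degenerates to $Q$ and the remaining freedom lands elsewhere). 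I would make this group-theoretic bookkeeping precise case by case.

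First I would treat the points $P \in \Omega$ lying in $\C^2_Q(5) - \C_Q(5)$ using Lemma \ref{curvesinf}: the limit curve is a genuine degree-$6$ curve meeting $S$ at $Q$ with multiplicity at least $5$. Substituting the explicit parametrization of $y$ and $x$ from Lemma \ref{curvesinf} into $-y^2 + x^3 + f(z,w)x + g(z,w)$, one reads off the coefficient of ${z'}^5{w'}$ and ${z'}^6$; the multiplicity of intersection at $Q$ is determined by whether this coefficient vanishes. When $S_0$ is nodal and $P = P_1$, the computation in the proof of Lemma \ref{L:E0node} shows that the limit curve is $S_0$ itself (via the stated birational morphism), so the curve lies in $S$ and the sixth intersection point must be computed as the limit of nearby genuine sixth points; here the intersection multiplicity at $Q$ jumps and yields $-4Q$. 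Similarly the cuspidal case of Lemma \ref{L:E0cusp} gives $S_0$ as the limit curve and yields $-4Q$. In all remaining subcases of $\C^2_Q(5)$ (the node point $P_2$, and the generic points with $\alpha$ a root of $c_1\alpha^2 + c_2\alpha - c_5$), the limit curve is not contained in $S$, meets $S$ at $Q$ with multiplicity exactly $5$, and the sixth point is $-5Q$.

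Next I would handle the points $P \in \pC_Q(5) - (\C^1_Q(5) \cup \C^2_Q(5))$ above the singular point $(0:1:0)$ using Lemma \ref{imagetangent}. There the image $\gamma(\tau^{-1}(P))$ is the tangent line to $S_0$ at $Q$, which meets $S_0$ (hence $S$, since the curve lies over $z=0$) with multiplicity exactly $3$ at $Q$. However, Remark \ref{nonreducedinf} records that the scheme-theoretic limit curve is the nonreduced curve $z^2 = 0$ together with that line, meeting $S$ with total multiplicity $6$ at $Q$; since the existence of such a $P$ forces $Q$ to have order $3$ on $S_0^\ns(k)$, we have $3Q = \O$, so $-5Q = -2Q = Q$ as a point but $-4Q = -Q = 2Q$. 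I would compute $\sigma(P)$ as the limit of the sixth intersection points along the family and check it equals $-5Q$ (the curve is reduced-generically of multiplicity $5$ at $Q$, so the sixth point is $-5Q$), consistent with the "otherwise" clause. The cleanest way to establish that $\sigma$ is genuinely \emph{well defined} (single-valued, not just a limit) at each $P \in \Omega$ is to observe that $\rho$ extends to a section of $\tau$ over $\Omega$ given in the appropriate affine chart by the regular functions $F_5, F_6$ (Remark \ref{F4F5F6polys}), evaluate $(-F_5 : F_6)$ at $P$ in the charts $\HH_2, \HH_3, \HH_4$, and then apply the explicit formulas for $\gamma$ on $\Delta_2, \Delta_3, \Delta_4$ from Proposition \ref{propgammaext} to land on a definite point of $S_0$.

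The main obstacle I anticipate is the node case $P = P_1$ (and the cusp case), where the limit curve degenerates \emph{into} $S$ itself, so the naive "sixth intersection point" is undefined along $S_0$; here one must argue that $\rho$ still extends regularly to $P_1$ and that $\gamma \circ \rho$ produces a well-defined point, and one must carefully track the jump in intersection multiplicity at $Q$ from $5$ to $6$ that changes the answer from $-5Q$ to $-4Q$. This requires evaluating $(-F_5 : F_6)$ at $P_1$ in the correct chart and verifying that it does not hit the indeterminate value corresponding to the whole fiber $S_0$; I would carry out exactly this local computation, using $4f_0^3 + 27g_0^2 = 0$ and the factorizations of the $\Phi_i$ from the proof of Lemma \ref{L:E0node}, to pin down $\sigma(P_1) = -4Q$.
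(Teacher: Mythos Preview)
Your overall plan matches the paper's proof: split into the case where the limit curve $C=\gamma(\tau^{-1}(P))$ is not contained in $S$ (then $C\cap S_0$ is a $0$-cycle of degree $6$ supported at $Q$ with multiplicity $\geq 5$, and the residual point is $-5Q$ by linear equivalence on $S_0$), versus the case where $C=S_0$ (the cusp case and the nodal point $P_1$), where one must extend $\rho$ by a genuine local computation and then apply $\gamma$.

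Two corrections. First, for $P$ over the point $(0{:}1{:}0)$ you should not try to argue via any multiplicity bookkeeping; the paper simply observes that the (nonreduced) limit curve is \emph{not} contained in $S$, so it falls under the easy case and the sixth point is $-5Q$ directly by the linear-equivalence argument. Second, and more importantly, your heuristic that in the hard case ``the intersection multiplicity at $Q$ jumps from $5$ to $6$ and yields $-4Q$'' is not how the argument works and does not predict the answer. When the limit curve equals $S_0$ there is no intersection number at all; the value $\sigma(P_1)$ is obtained only by showing that $\rho$ extends regularly to $P_1$. Concretely, in the chart $\HH_2$ one sets $F_i'=\lambda_2^i F_i$, checks that $\rho$ is given by $(-F_5':F_6')$, and computes the leading terms of $F_5'$ and $F_6'$ in the maximal ideal at $P_1$. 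The crucial inputs you have not mentioned are: (i) both $F_5'$ and $F_6'$ vanish at $P_1$, so one must work modulo $\lambda_2\m$ and use the defining equation of $\C_Q^2(5)$ to eliminate $\mu_2-\alpha_1$; (ii) the leading coefficients are nonzero precisely because $f_1d+g_1\neq 0$ (smoothness of $S$ at the singular point of $S_0$) and $\phi_4,\phi_5$ do not both vanish. Only after this does one plug $(z':w')=(-\phi_5:(x_0-d)^5\phi_4)$ into the parametrization of Lemma~\ref{curvesinf} and verify by explicit formula that the resulting point is $-4Q$. None of this is visible from a multiplicity-jump picture; you should carry out exactly this computation.
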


\begin{proof}
Let $P \in \Omega$. 
Then by Lemmas \ref{curvesinf}, \ref{L:E0node}, \ref{L:E0cusp}, \ref{imagetangent},
and Remark \ref{nonreducedinf},
the scheme-theoretic image of $\tau^{-1}(P)$ under $\gamma$ is a curve 
of degree $6$ in the plane given by $z=0$ in $\P$.
We denote this curve by $C$. 
A parametrization of $C$ is given in Lemma \ref{curvesinf} if 
$P \in \C^2_Q(5) - \C_Q(5)$; the curve $C$ is nonreduced if 
$P \in \pC_Q(5) - \big(\C_Q^1(5) \cup \C^2_Q(5)\big)$.
The intersection $C \cap S$ is the same as the intersection of $C$ with 
with $S_0 = S \cap \{z=0\}$, and $C$ intersects $S_0$ with multiplicity at least $5$ 
at $Q$. 

If $S_0$ is smooth, then $S_0$ has genus 
$1$, so $C$ has no components in common with $S_0$. 
The curves $S_0$ and $C$ also 
have no components in common if  
$P \in \pC_Q(5) - \big(\C_Q^1(5) \cup \C^2_Q(5)\big)$ (Lemma \ref{imagetangent}
and Remark \ref{nonreducedinf})
or $P=P_2$ as in Lemma \ref{L:E0node}. Hence, in all these cases 
there is a unique 
sixth intersection point in $C\cap S = C \cap S_0$, and we can extend $\sigma$ to 
$P$ by sending $P$ to this sixth intersection point, say $R$; the divisor 
$5(Q)+(R)$ on $S_0$ is a hypersurface section inside the plane given by $z=0$,
so it is linearly equivalent to a multiple of $3(\O \cap S_0)$ on $S_0$, 
and we find $R = -5Q$ in $S_0^\ns(k)$. 

We are left with the case that $S_0$ has a cusp (Lemma \ref{L:E0cusp}), or $S_0$ has a node and $P=P_1$ as in Lemma 
\ref{L:E0node}. In both cases, there is a $d \in k$ such that $f_0 = -3d^2$ and $g_0=2d^3$ and, 
in terms of the coordinates $(\overline{p}:\overline{q} : \overline{r})$ on $\P(1,2,1)$, we have 
$P = (1: \alpha: 0) \in \C^2_Q(5)$ with $\alpha = \frac{1}{4}(x_0+2d)^{-1}$. 
By Lemma~\ref{lem:affcoords}, the functions $\lambda_2= \overline{r}/\overline{p}$ and $\mu_2 = \overline{q}/\overline{p}^2$ are affine coordinates for $\HH_2$, with $P$ corresponding to $(\lambda_2,\mu_2) = (0,\alpha)$, and the functions
$\fa_2,\fb_2$, and $\fc_2$ are regular on $\HH_2$. As before, we denote the restrictions of 
$\lambda_2$, $\mu_2$, $\fa_2,\fb_2$, and $\fc_2$ to $\C_Q^2(5)$ by the same symbols.

Using \eqref{Fis} and \eqref{fraks}, we can express, for each $i$, the function 
$F_i' = \lambda_2^iF_i$ on $\pC_Q(5)$ as a polynomial in terms of $\lambda_2$, $\mu_2$,
$\fa_2,\fb_2$, and $\fc_2$, which shows that $F_i'$ 
is regular on $\C_Q^2(5)$. In particular, we have
\begin{align*}
F_5' &=     -2\fb_2\fc_2 + 3\mu_2^2 + f_4\lambda_2^4 + f_3\lambda_2^3\mu_2 + g_5\lambda_2^5, \\
F_6' &=     -\fc_2^2 + \mu_2^3 + f_4\lambda_2^4\mu_2 + g_6\lambda_2^6.
\end{align*}
Recall from Subsection \ref{extendinggamma} that $\Delta_1$ and $\Delta_2$ are glued 
by setting $(z':w') = (\overline{p}z : \overline{r}w) = (z: \lambda_2 w)$.
Hence, on $\C^2_Q(5)$, the rational map $\rho \colon \C^2_Q(5) \dashrightarrow 
\C^2_Q(5) \times \P^1(z',w') \subset \Gamma$ is given 
by 
$$
\rho(P) = \big(P,(-F_5(P) : \lambda_2(P)F_6(P)) \big) = \big(P,(-F_5'(P): F_6'(P))\big).
$$
The functions $\lambda_2$ and $\mu_2-\alpha$ are local parameters for 
$\HH_2$ at $P$, so their restrictions
generate the maximal ideal $\m$ of the local ring $A_P$ of $\C_Q^2(5)$ at $P$.
From \eqref{completion}, we find that in $A_P$ we have  
\begin{equation}\label{modr2}
c_1 {\mu_2}^2 + c_2\mu_2-c_5 \equiv (c_6-c_3\mu_2)\lambda_2 
\end{equation}
modulo $\lambda_2^2$. 

Now suppose $\phi_3 \neq 0$. Then $c_1 \neq 0$ and the left-hand side of \eqref{modr2} factors as $c_1(\mu_2-\alpha)(\mu_2-\alpha')$ with 
$\alpha' = \frac{1}{4}(x_0+7d)(x_0+2d)^{-1}(x_0+3d)^{-1}$. 
In fact, $\alpha$ and $\alpha'$ correspond to $\alpha_1$ and $\alpha_2$ of Proposition~\ref{L:E0node}.
Modulo $\m^2$, the left- and right-hand side of \eqref{modr2} are congruent to 
$c_1(\mu_2-\alpha)(\alpha-\alpha')$ and $(c_6-c_3\alpha)\lambda_2$, respectively. 
Assume $d \neq 0$ as well. Then $\alpha' \neq \alpha$, 
so we find that modulo $\m^2$ we have 
$\mu_2-\alpha \equiv \delta \lambda_2$ with
$$
\delta = \frac{c_6-c_3\alpha}{c_1(\alpha-\alpha')}.
$$
Hence, $\m$ is generated by $\lambda_2$ and one checks, preferably with the help of 
a computer, that we have 
\begin{equation}\label{F5F6modr2}
F_5' \equiv \frac{(f_1d+g_1)\phi_5}{(x_0-d)^{10}\phi_2^2} \cdot \lambda_2
\qquad \mbox{and} \qquad
F_6' \equiv \frac{(f_1d+g_1)\phi_4}{(x_0-d)^5\phi_2^2} \cdot \lambda_2 \qquad \pmod{\lambda_2\m}.
\end{equation}
We claim that \eqref{F5F6modr2} also holds when $d=0$ or $\phi_3=0$. Indeed, if $\phi_3=0$, then one uses $x_0=-3d$, while $c_1=0$ and $c_2 \neq 0$, so \eqref{modr2} yields $\mu_2-\alpha \equiv c_2^{-1}(c_6-\alpha c_3)\lambda_2 \pmod{\m^2}$; it follows that  $\lambda_2$ generates $\m$, and one checks \eqref{F5F6modr2} again by computer.
If $d=0$, then $\m$ may not be principal, so being congruent modulo $\lambda_2\m$ is potentially stronger than being congruent modulo $\m^2$; but using that modulo $\lambda_2\m$ we have \eqref{modr2} and $\mu_2\lambda_2 \equiv \alpha \lambda_2$, one can again  check that \eqref{F5F6modr2} holds. Hence, \eqref{F5F6modr2} holds in all cases.

Now $f_1d+g_1$ is nonzero because the surface 
$S$ is smooth at the singular point of $S_0$. 
Also, since $Q$ is not the singular point of $S_0$, 
we have $x_0 \neq d$ and $\phi_4$ and $\phi_5$ 
do not both vanish. We conclude that $\rho \colon \C^2_Q(5) \dashrightarrow \C^2_Q(5) \times \P^1(z',w')$
is well defined at $P$, sending $P$ to $\big(P,(-F_5'(P): F_6'(P))\big) = \big(P,(-\phi_5 : (x_0-d)^{5}\phi_4)\big)$. 
Substituting this into the parametrization of Lemma \ref{curvesinf}, we find $\sigma(P) = \gamma(\rho(P)) = (x_1:y_1:0:1)$, with 
$$
x_1 = d+\frac{(x_0-d)^4}{16(x_0+2d)(x_0+5d)^2}  \qquad \mbox{and} \qquad y_1 = -\frac{(x_0-d)^3(x_0^2+22dx_0+49d^2)y_0}{64(x_0+2d)^2(x_0+5d)^3}.
$$
It is easy to check that  this point equals $-4Q$ in the group $S_0^\ns(k)$, using the fact that 
the tangent line to $S_0$ at $Q$ intersects $S_0$ also in $-2Q$, the tangent line to $S_0$ at $-2Q$ intersects $S_0$ also in $4Q$, and the inverse of a point is obtained by negating the $y$-coordinate.
\end{proof}

\begin{cor}\label{cor310}
The following statements hold. 
The multiples of $Q$ are taken in the group $S_0^{\ns}(k)$. 
\begin{enumerate}
\item We have $\sigma(\Omega) =\{-5Q\}$ if and only if $S_0$ is smooth.
\item If $\sigma(\Omega) =\{-4Q,-5Q\}$, then $S_0$ is nodal. The converse holds if $3Q\neq \O$. 
\item If $S_0$ is cuspidal, then $\sigma(\Omega) =\{-4Q\}$. The converse holds if $3Q\neq \O$. 
\item If $4Q\neq \O$ and $5Q \neq \O$, then $\sigma(\Omega) \subset S_0^{\ns}(k)-\{\O\}$ and
$\varphi(\sigma(\Omega)) = \{(0:1)\}$.
\end{enumerate}
\end{cor}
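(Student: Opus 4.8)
The plan is to read off each of the four statements directly from Proposition \ref{imageinf}, which already computes $\sigma(P)$ for every $P \in \Omega$ in terms of the geometry of the fiber $S_0$. The key input is the trichotomy provided there: for $P \in \Omega$ we have $\sigma(P) = -4Q$ precisely when $S_0$ has a cusp, or $S_0$ has a node and $P = P_1$ as in Lemma \ref{L:E0node}; and $\sigma(P) = -5Q$ in every other case. So the whole argument reduces to bookkeeping about \emph{which} limit points lie in $\Omega$ in each of the three cases (smooth, nodal, cuspidal) for $S_0$, combined with the subtlety that $-4Q$ and $-5Q$ need not be distinct points when $Q$ has small order.

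For statement (1), if $S_0$ is smooth then none of the exceptional cases of Proposition \ref{imageinf} occurs, so $\sigma(P) = -5Q$ for every $P \in \Omega$, giving $\sigma(\Omega) = \{-5Q\}$; since $\Omega \neq \emptyset$ (there is always at least one limit point, e.g.\ in $\C_Q^2(5) - \C_Q(5)$ by Lemma \ref{curvesinf}), this is a genuine singleton. Conversely, if $S_0$ is not smooth it is nodal or cuspidal, and in either case Proposition \ref{imageinf} produces a $P$ with $\sigma(P) = -4Q$; thus $-4Q \in \sigma(\Omega)$, and I would need to note that $-4Q = -5Q$ is impossible here, which is exactly the condition $Q \neq \O$ on $S_0^\ns(k)$, true since $Q \neq \O$. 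For statements (2) and (3) the plan is to enumerate the limit points using Lemmas \ref{L:E0node} and \ref{L:E0cusp}: in the cuspidal case $\Omega$ has a single point mapping to $-4Q$ by Lemma \ref{L:E0cusp}, so $\sigma(\Omega) = \{-4Q\}$; in the nodal case, $\Omega$ contains $P_1 \mapsto -4Q$ always, and the second point $P_2 \mapsto -5Q$ exactly when $\phi_3 \neq 0$, i.e.\ when $3Q \neq \O$. This explains the hypothesis ``$3Q \neq \O$'' attached to the converses: when $3Q = \O$ one has $\phi_3 = 0$, only $P_1$ survives in $\C_Q^2(5)-\C_Q(5)$, and one must separately account for any limit points over the singular point of $\P(1,2,1)$, which by Lemma \ref{imagetangent} arise precisely when $Q$ has order $3$ and which also map to $-5Q$; so without the hypothesis the value $\{-4Q\}$ could fail to force cuspidality, and $\{-4Q,-5Q\}$ could fail to force a node.

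For statement (4), assuming $4Q \neq \O$ and $5Q \neq \O$, the plan is to observe that both candidate values $-4Q$ and $-5Q$ are then different from $\O$, so $\sigma(\Omega) \subset S_0^\ns(k) - \{\O\}$ regardless of which case we are in. The equality $\varphi(\sigma(\Omega)) = \{(0:1)\}$ then follows because every image point $\sigma(P)$ lies on $S_0$, which is the fiber over $t = \varphi(Q) = (0:1)$ by our normalization $\varphi(Q) = 0$; so $\varphi$ is constant with value $(0:1)$ on $\sigma(\Omega)$. The main obstacle I anticipate is not any computation — those are all done in Proposition \ref{imageinf} and the preceding lemmas — but the careful case analysis of coincidences among $-4Q$, $-5Q$, and $\O$ in $S_0^\ns(k)$, since the exact hypotheses ($3Q \neq \O$ in (2) and (3); $4Q,5Q \neq \O$ in (4)) are precisely what is needed to rule out degenerate collisions; I would state each collision explicitly and verify it is excluded by the stated order hypotheses.
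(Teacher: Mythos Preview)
Your proposal is correct and follows essentially the same approach as the paper's proof: both read each statement directly off Proposition~\ref{imageinf}, using Lemmas~\ref{L:E0node} and~\ref{L:E0cusp} to enumerate $\Omega$ in the singular cases, and deducing the forward direction of (2) and the converse of (3) from (1) and (3) (respectively (1) and (2)). Your explicit observation that $-4Q \neq -5Q$ because $Q \neq \O$ is the one point the paper leaves implicit; your discussion of the limit points over $(0:1:0)$ in the $3Q=\O$ case is explanatory rather than necessary (the corollary only asserts the converses under $3Q \neq \O$), and the fact that those points map to $-5Q$ should be attributed to Proposition~\ref{imageinf} rather than Lemma~\ref{imagetangent}.
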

\begin{proof} 
The `if'-part of (1) follows immediately from Proposition \ref{imageinf}. 
For the `only if'-part, note that if $S_0$ is singular, 
then by Proposition \ref{imageinf} there exists a $P \in \Omega$ with $\sigma(P) = -4Q$: when $S_0$ is cuspidal, this holds for any $P \in \Omega$ and when $S_0$ is nodal, we can take $P=P_1$ as in Lemma \ref{L:E0node}. 

The first part of (3) follows directly from Proposition \ref{imageinf}. Together with (1), this also implies the first part of (2). If $3Q\neq \O$ and $S_0$ is nodal, then for the points $P_1$ and $P_2$ as in Lemma \ref{L:E0node}, we have $\sigma(P_1)=-4Q$ and $\sigma(P_2)=-5Q$ by Proposition \ref{imageinf}, which proves the second part of (2). The second part of (3) now follows from (1) and (2).

Statement (4) follows immediately from Proposition \ref{imageinf}.
\end{proof}

The next two sections investigate the conditions under which $\sigma \colon \pC_Q(5) \dashrightarrow S$ sends an irreducible component of $\pC_Q(5)$ to a fiber of $\varphi|_S \colon S \dashrightarrow \P^1$. The following lemma shows that if this is the fiber that contains $Q$, then $\sigma$ is the constant map to $Q$ on the component. 

\begin{lemma}\label{Qabove0}
Let $\C_0$ be a component of $\pC_Q(5)$ for which $\varphi(\sigma(\C_0)) = (0:1)$. Then $\sigma(\C_0)=Q$. 
\end{lemma}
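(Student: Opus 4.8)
The plan is to reduce the statement to the behaviour of $\sigma$ on the affine part $\C_0 \cap \C_Q(5)$, where the parametrized curves are genuine sections of $\varphi$ and $\sigma$ admits the explicit description from the end of Section~\ref{multisection}. First I would note that, since $\Omega = \pC_Q(5) - \C_Q(5)$ is a finite set, no component of $\pC_Q(5)$ is contained in $\Omega$; hence $\C_0 \cap \C_Q(5)$ is a dense open subset of the irreducible curve $\C_0$. As $\sigma$ fails to be well defined on $\C_Q(5)$ only at the finitely many points (at most $240$) whose associated curve is a $(-1)$-curve contained in $S$ (see Proposition~\ref{CSdeg6} and Lemma~\ref{minusone}), there is a dense open subset $V \subseteq \C_0$ on which $\sigma$ is defined and all of whose points lie in $\C_Q(5)$.

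The heart of the argument is the claim that $\sigma(P) = Q$ for every $P \in V$. Fix such a $P$ and let $C$ be the corresponding section of $\varphi \colon U \to \P^1$. By hypothesis, $\sigma(P) \in \sigma(\C_0) \subseteq S_0$. Since $C$ is a section, $\varphi|_C \colon C \to \P^1$ is an isomorphism, so $C$ meets the plane $\{z=0\}$ in exactly one point, namely $C(0:1)$, which is $Q$ because $C$ passes through $Q$ and $\varphi(Q) = (0:1)$. Consequently
$$
C \cap S_0 = C \cap \{z=0\} \cap S \subseteq \{Q\}.
$$
As $\sigma(P)$ is one of the six intersection points of $C$ with $S$ and it lies in $S_0$, it must equal $Q$. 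Equivalently, in the explicit description $\sigma(P)$ is the point~\eqref{paramC} evaluated at $t = -F_5(P)/F_6(P)$; membership in $S_0$ forces its $z$-coordinate $t$ to vanish, hence $F_5(P) = 0$, and then~\eqref{paramC} at $t=0$ returns precisely $Q = (x_0 : y_0 : 0 : 1)$.

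Finally I would conclude that $\sigma$ is the constant map with value $Q$ on the dense subset $V$, so its image $\sigma(\C_0)$, being the closure of $\sigma(V)$, equals $\{Q\}$. I do not anticipate a serious obstacle here; the two points that require care are the density of $V$ in $\C_0$, which is what allows the conclusion on $V$ to propagate to the whole component, and the elementary but essential geometric fact that a section of $\varphi$ meets the fiber $S_0$ in the single point $Q$. It is this rigidity that pins the sixth intersection point down to $Q$, rather than allowing $\sigma(P)$ to vary over $S_0$.
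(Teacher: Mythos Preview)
Your proof is correct and follows essentially the same approach as the paper's: both restrict to the cofinite set of points $P \in \C_0 \cap \C_Q(5)$ where $\sigma$ is defined, observe that the associated section $C$ meets the fiber $S_0$ only at $Q$, and conclude that the sixth intersection point $\sigma(P) \in S_0$ must be $Q$. Your version is somewhat more explicit about the density of $V$ and offers the alternative computation via $F_5$, but the underlying argument is identical.
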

\begin{proof}
Without loss of generality, we assume $k$ is algebraically closed. 
Let $P \in \C_0 \cap \C_Q(5)$ be such that the associated section $C\in \C(k)$ is not entirely 
contained in $S$. Then $\sigma$ is well defined at $P$ and $\sigma(P)$ is the unique sixth 
intersection point of $C$ with $S$. Since $C$ is a section of $\varphi \colon U \dashrightarrow \P^1$,
it intersects the fiber $S_0$ only once, namely in $Q$, and as this 
sixth intersection point lies in $S_0$ as well, we conclude $\sigma(P) = Q$. Thus all but finitely many 
points of $\C_0$ map to $Q$ under $\sigma$, so $\sigma(\C_0)=Q$.
\end{proof}

\section{Examples}\label{examples}

In this section, $k$ still denotes a field of characteristic not equal to $2$ or $3$. We will give examples of surfaces $S \subset \P$ over $k$ given by \eqref{maineq}, together with a point $Q\in S(k)$ for which the map $\sigma \colon \pC_Q(5) \dashrightarrow S$ sends at least one irreducible component of $\pC_Q(5)$ to a fiber of $\varphi|_S \colon S \dashrightarrow \P^1$. In the next section we will see that, at least outside characteristic $5$, these examples include all cases where every component of $\pC_Q(5)$ is sent to a fiber on $S$. 

In view of Theorem \ref{maincor}, it is important to note that in all examples, there are at least six $(-1)$-curves on $S$ going through $Q$. Recall from Remark \ref{minonecurves} that these correspond to the points of $\A^2\isom \C_Q(4)$ with $F_4=F_5=F_6=0$. Recall also, from the last paragraph of Section \ref{multisection}, that the map $\varphi \circ \sigma \colon \C_Q(5) \to \P^1$ is given by 
$[-F_5 : F_6]$.

\begin{example}\label{ex:hell5Q}
Let $\beta,\delta \in k^*$ and assume the characteristic of $k$ is not $5$. Set 
\begin{align*}
x_0&=3(\beta^2+6\beta+1), &f_0&=-27(\beta^4+12\beta^3+14\beta^2-12\beta+1), \\
y_0&=108\beta, & g_0&=54(\beta^2+1)(\beta^4+18\beta^3+74\beta^2-18\beta+1),
\end{align*}
and let $S \subset \P$ be the surface given by \eqref{maineq} with 
$f = f_0w^4$ and $g = \delta z^5w+g_0w^6$, and with point $Q = (x_0:y_0:0:1)$. 
Assume that $S$ is smooth, so that it is a del Pezzo surface.
The curve $S_0$ is nonsingular if and only if $\beta(\beta^2+11\beta-1) \neq 0$. The point
$Q$ has order $5$ in $S_0^\ns(k)$. 
Generically, in particular over a field in which $\beta$ and $\delta$ are 
independent transcendentals, the surface $S$ is smooth and the fibration 
$\pi \colon \E \to \P^1$ has $10$ nodal fibers (type $I_1$) and one cuspidal fiber 
(type $II$) above $(z:w)=(1:0)$. 

Let $\alpha$ be an element in a field extension of $k$ satisfying $\alpha^2 = \alpha+1$.   Then 
$\pC_Q(5)$ splits over $k(\alpha)$ into two components. 
The function $F_6$ vanishes on $\pC_Q(5)$ and the map $\sigma \colon \C_Q(5) \dashrightarrow S$ sends each component birationally to the cuspidal fiber. 
The conic bundle $\Gamma$ splits into two components as well. 
Both components of the image $T$ of 
$\gamma \colon \Gamma \to \P$ (cf. Remark \ref{Tremark}) intersect $S$ in 
the cuspidal fiber and, over an extension of $k(\alpha)$ of degree at most $5$, five $(-1)$-curves; 
the surface $T$ intersects $S$ doubly in the cuspidal curve, as well as in ten 
$(-1)$-curves going through $Q$, corresponding to the points on the affine part $\C_Q(5)$ where 
$F_5$ vanishes. 
Indeed, if $\alpha, \epsilon$ in an extension of $k$ satisfy
$$
\alpha^2 = \alpha + 1 \qquad \mbox{and} \qquad \delta =  -6(\beta + \alpha^5)\epsilon^5,
$$
then we have a section over $k(\alpha, \epsilon)$ going through $Q$ with 
\begin{align*}
x =& \epsilon^2 z^2 + 6\alpha \epsilon zw + x_0w^2, \\
y =& -\epsilon^3 z^3 + 3(\beta +2\alpha +3)\epsilon^2 z^2w      
+ 18\alpha(\beta+1) \epsilon zw^2 + y_0 w^3. 
\end{align*}
\end{example}

\begin{example}\label{ex:char5}
Let $k$ be a field of characteristic $5$ containing elements $\alpha,\beta \in k$. 
Let $S \subset \P$ be the surface given by \eqref{maineq} with 
$f = \alpha z^4$ and $g = \beta z^6 + (3\alpha+1) z^5w+ zw^5$, and with point 
$Q = (1:1:0:1)$. Assume that $S$ is smooth, so that it is a del Pezzo surface.
Generically, and in particular when $\alpha$ and $\beta$ are independent 
transcendentals, this is the case, and the fibration 
$\pi \colon \E \to \P^1$ has $10$ nodal fibers (type $I_1$) and one cuspidal fiber 
(type $II$), namely $S_0$. The curve $\C_Q(5)$ is given by
$$
q^2 + (2p^2 -1)q + p^4 - p^2 + 3\alpha = 0,
$$
and $F_5$ vanishes on $\C_Q(5)$. By Lemma \ref{Qabove0}, 
the map $\sigma$ is constant and sends $\C_Q(5)$ to $Q$. 
Generically, the curve $\C_Q(5)$ is geometrically irreducible.
There are at least ten $(-1)$-curves going through~$Q$. 
\end{example}

\begin{example}\label{ex:order3}
For any $\beta\neq 0$, the point $(x_0,y_0)=(3,\beta)$ has order $3$ on the Weierstrass curve given by 
$y^2 = x^3+f_0x+g_0$ with $f_0 = 6\beta-27$ and $g_0 = \beta^2-18\beta+54$; this curve is nonsingular
if and only if $\beta \neq 4$. 

{\em \noindent Subexample (i).} For any $\alpha_1,\alpha_2,\alpha_3 \in k$ we consider the surface $S\subset \P$ given by 
\eqref{maineq} with   
\begin{align*}
f &= -3\alpha_1^2z^4 + 3\alpha_2z^3w + (18-3\beta)\alpha_1 z^2w^2 +f_0w^4,\\
g &= \alpha_3z^6 + 3\alpha_1\alpha_2 z^5w + (18-6\beta )\alpha_1^2 z^4w^2 + (\beta - 9)\alpha_2z^3w^3 + 
(15\beta - 54)\alpha_1 z^2w^4 + g_0w^6,
\end{align*} 
and with $Q=(3:\beta:0:1)$, so that $Q$ has order $3$ on $S_0^\ns(k)$. 
Assume $S$ is smooth, so that it is a del Pezzo surface.
The affine part $\C_Q(5)$ of the curve $\pC_Q(5)$ is given by 
$$
(p^2-\beta \alpha_1)(\beta q - p^2 + 2\beta\alpha_1)=0.
$$
The function $F_5=3\beta^{-1}p(q+\alpha_1)(\beta q - p^2 + 2\beta\alpha_1)$ vanishes on the
component given by the vanishing of the second factor; by Lemma \ref{Qabove0}, this 
component is contracted by the map 
$\sigma \colon \pC_Q(5) \dashrightarrow S$, which sends it to $Q$.
There are at least six $(-1)$-curves on $S$ going through $Q$. 

{\em \noindent Subexample (ii).}
For any $\alpha_4,\alpha_5,\alpha_6 \in k$ 
we consider the surface $S\subset \P$ given by \eqref{maineq} with 
\begin{align*}
f &= 3\alpha_4z^3w +f_0w^4,\\
g &= \alpha_6z^6 + \alpha_5 z^3w^3 + g_0w^6,
\end{align*} 
and with $Q=(3:\beta:0:1)$. 
Assume $S$ is smooth, so that it is a del Pezzo surface.
The affine part $\C_Q(5)$ of the curve $\pC_Q(5)$ is given by
$$
p(\beta pq - p^3 + (\beta - 9)\alpha_4 - \alpha_5)=0.
$$
Again, the function $F_5=3\beta^{-1}q(\beta pq - p^3 + (\beta - 9)\alpha_4 - \alpha_5)$ vanishes on the component given by the vanishing of the second factor;  again by Lemma \ref{Qabove0}, this component is contracted by the map $\sigma \colon \pC_Q(5) \dashrightarrow S$, which sends it to $Q$. 
There are at least nine $(-1)$-curves on $S$ going through $Q$. 

{\em \noindent Subexample (iii).}  
Let $S$ be any smooth surface that fits in both families of these examples, i.e., with 
$\alpha_1 =0$, $\alpha_4=\alpha_2$, $\alpha_5 = (\beta-9)\alpha_2$, and $\alpha_6=\alpha_3$.  
Writing $\epsilon =\alpha_2$ and $\delta=\alpha_3$, we have 
\begin{align*}
f &= 3\epsilon z^3w+f_0w^4,\\
g &= \delta z^6+ (\beta-9)\epsilon z^3w^3 + g_0w^6.
\end{align*} 
Generically, say over a field in which $\beta, \delta$, and $\epsilon$ are independent transcendentals, the
surface $S$ is smooth and the fibration $\pi \colon \E \to \P^1$ has twelve nodal fibers. 
Suppose $S$ is indeed smooth. Then $\beta \not\in \{0,4\}$.
The affine part $\C_Q(5)$ of the curve $\pC_Q(5)$ is given by
$$
p^2 (\beta q - p^2)=0,
$$ 
so it consists of two components. The function $F_5$ vanishes on 
both components, so by Lemma \ref{Qabove0}, they are 
contracted to $Q$ by $\sigma \colon \C_Q(5) \dashrightarrow S$.  
There are at least nine $(-1)$-curves on $S$ going through~$Q$.
\end{example}

\begin{example}\label{ex:order3sec}
For any $\beta \in k^*$, the point $(0,\beta)$ has order $3$ on the elliptic curve given by $y^2 = x^3+\beta^2$. In the following three subexamples, we take $g = \epsilon z^6 + \delta z^3w^3 + \beta^2 w^6$ for some $\delta,\epsilon \in k$ and the point $Q = (0:\beta:0:1) \in \P$, which in all cases has order $3$ on $S_0$. 

{\em \noindent Subexample (i).} Let $S$ be the surface given by \eqref{maineq} with $f = \alpha z^2w^2$ for some $\alpha \in k$ and assume that $S$ is smooth. The affine part $\C_Q(5)$ of the curve $\pC_Q(5)$ is given by 
$(3p^2+\alpha)q=0$. The function $F_5=3pq^2$ vanishes on the
component given by $q=0$; by Lemma \ref{Qabove0}, this component is contracted by the map 
$\sigma \colon \pC_Q(5) \dashrightarrow S$, which sends it to $Q$.
There are at least six $(-1)$-curves on $S$ going through $Q$. Generically, there are twelve nodal fibers.

{\em \noindent Subexample (ii).} Let $S$ be the surface given by \eqref{maineq} with $f = \alpha z^3w$ for some $\alpha \in k$ and assume that $S$ is smooth. The affine part $\C_Q(5)$ of the curve $\pC_Q(5)$ is given by 
$p(3pq+\alpha)=0$. The function $F_5=q(3pq+\alpha)$ vanishes on one of the components; by Lemma \ref{Qabove0}, this component is contracted by the map 
$\sigma \colon \pC_Q(5) \dashrightarrow S$, which sends it to $Q$.
There are at least nine $(-1)$-curves on $S$ going through $Q$.  Generically, there are twelve nodal fibers.

{\em \noindent Subexample (iii).} Let $S$ be the surface given by \eqref{maineq} with $f = 0$ and assume that $S$ is smooth. The affine part $\C_Q(5)$ of the curve $\pC_Q(5)$ is given by 
$p^2q=0$. The function $F_5$ vanishes on both components, so we have $\sigma(\pC_Q(5))=Q$
by Lemma \ref{Qabove0}. The surface is isotrivial; all fibers have $j$-invariant $0$. There are at least nine $(-1)$-curves on $S$ going through $Q$, and there are six cuspidal fibers.
\end{example}

\section{A multisection}\label{S:multisection}

We continue the notation of Sections \ref{multisection} and \ref{S:completion}. In particular, the field $k$ with characteristic not equal to $2$ or $3$, the surface $S$, and the point $Q$ are fixed as before, as are all the objects that depend on them. 

As we have seen in the previous section, not every component of $\pC_Q(5)$ necessarily has its image under $\sigma \colon \pC_Q(5) \dashrightarrow S$ map dominantly to $\P^1$ under the projection $\varphi|_S \colon S \dashrightarrow \P^1$. 
Proposition \ref{vertcomp} states that this does hold for every component if the order of $Q$ is larger than $6$. Moreover, Proposition \ref{vertcomp} is sharp in the sense that there are examples where the order of $Q$ is $6$ and $\C_Q(5)$ has a component that maps under $\sigma$ to $Q$.

\begin{prop}\label{vertcomp}
Suppose the order of $Q$ in $S_0^\ns(k)$ is 
larger than $5$ and $\pC_Q(5)$ has a component $\C_0$ that maps 
under $\sigma \colon \pC_Q(5) \dashrightarrow S$ to a fiber of $\varphi$. Then $Q$ has order $6$ and $\sigma(\C_0)=Q$.
The curve $\pC_Q(5)$ has a unique second component, which is sent under $\sigma$ to a horizontal curve on $S$.
\end{prop}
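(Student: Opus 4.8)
The plan is to build on the first assertion of Proposition~\ref{vertcomp}, which I assume: $Q$ has order~$6$ in $S_0^\ns(k)$ and $\sigma(\C_0)=Q$. In particular $3Q\neq\O$, so $\phi_3\neq0$, and hence the leading coefficient $c_1=\phi_2^2\phi_3$ of $\ovq^2$ in~\eqref{completion} is nonzero. Thus the singular point $(0:1:0)$ of $\P(1,2,1)$ is not on $\pC_Q(5)$, the curve $\pC_Q(5)$ is cut out in $\P(1,2,1)$ by~\eqref{completion}, and projection onto $(\ovp:\ovr)$ is a finite morphism $\pi_0\colon\pC_Q(5)\to\P^1$ of degree~$2$. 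Every irreducible component of $\pC_Q(5)$ dominates $\P^1$ under $\pi_0$, for a non-dominating component would be a fibre of $\pi_0$ and hence zero-dimensional; so the degrees over $\P^1$ of the components sum to~$2$, and $\pC_Q(5)$ has at most two irreducible components.

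Next I would reduce the entire statement to the single assertion that $\sigma$ is \emph{not} the constant map with value $Q$ on $\pC_Q(5)$. This is equivalent to $F_5$ not vanishing identically on $\C_Q(5)$: since $\varphi\circ\sigma=[-F_5:F_6]$ and $\sigma(\C_0)=Q\in S_0$, the function $F_5$ vanishes on $\C_0$, while conversely $\sigma\equiv Q$ would give $\varphi\circ\sigma\equiv(0:1)$, i.e.\ $F_5\equiv0$. Granting this assertion, $\C_0$ cannot be the only irreducible component of $\pC_Q(5)$ (otherwise $\sigma(\pC_Q(5))=\sigma(\C_0)=\{Q\}$), so $\pC_Q(5)$ has at least two and hence, by the bound above, exactly two components, and the closure of the complement of $\C_0$ is the unique second component $\C_1$. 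To see that $\sigma(\C_1)$ is horizontal, suppose it were contained in a fibre of $\varphi$; then $\C_1$ is a component mapping under $\sigma$ to a fibre, so the first assertion of the Proposition applies to $\C_1$ and yields $\sigma(\C_1)=Q$, whence $\sigma\equiv Q$ on $\pC_Q(5)$, contradicting the assertion. Thus $\sigma(\C_1)$ lies in no fibre and is horizontal.

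The main obstacle is the assertion itself, that $F_5$ does not vanish identically on $\C_Q(5)$, and this is where $\phi_6=0$ enters; I would prove it by a finite computation. Expressing $F_5$ as a polynomial in $p,q$ via the formulas for $a,b,c$ in Lemma~\ref{CQ4A2}, it has degree~$2$ in $q$ and degree~$5$ in $p$, whereas the polynomial $g$ defining $\C_Q(5)$ in~\eqref{CQ5} has degree~$2$ in $q$ and degree~$4$ in $p$. If $F_5$ vanished on $\C_Q(5)$ and $g$ were squarefree, then $g\mid F_5$, forcing $F_5=(up+v)\,g$ for some $u,v\in k$; the coefficient of $q^2$ already determines $u,v$ (that of $g$ being the constant $c_1$, that of $F_5$ a linear form in $p$ with leading coefficient $(\phi_2^2-2\phi_4)/\phi_2^2$), and one checks, after substituting the $c_i$ and reducing modulo $\phi_6=\phi_5-\phi_4^2=0$ and the identities~\eqref{idPhis}, that the coefficients of $q^1$ and of $1$ then fail to match. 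A short separate check that the discriminant of $g$ as a quadratic in $q$ is not identically zero rules out the remaining possibility that $\C_Q(5)$ is a non-reduced double section. When $S_0$ is nodal no computation is needed: by Proposition~\ref{imageinf} and Lemma~\ref{L:E0node}, one of the two points of $\Omega=\pC_Q(5)-\C_Q(5)$ over $(1:0)$ has $\sigma$-image $-4Q=2Q\neq Q$, so $\sigma$ is visibly nonconstant. Finally, since $\C_0$ is contracted while $\C_1$ is horizontal, the two components are not interchanged by $\mathrm{Gal}(\kbar/k)$, so both are defined over $k$ and $\C_1$ is the unique second component over $k$.
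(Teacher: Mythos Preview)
Your proposal follows essentially the same route as the paper's proof, but with two differences worth flagging.

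First, you explicitly \emph{assume} the first assertion (that $Q$ has order~$6$ and $\sigma(\C_0)=Q$) without proving it. The paper does prove this: a projective component $\C_0$ meets $\Omega$ in some point $R$; by Corollary~\ref{cor310}(4) we have $\varphi(\sigma(R))=(0{:}1)$, so if $\sigma(\C_0)$ lies in a fibre then Lemma~\ref{Qabove0} gives $\sigma(\C_0)=Q$, and Proposition~\ref{imageinf} forces $Q=-4Q$ or $Q=-5Q$; the hypothesis on the order then gives order~$6$. This is short, but it is part of the proposition, so your write-up is incomplete here.

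Second, for the core assertion that $F_5$ does not vanish on $\C_Q(5)$, your argument and the paper's coincide: show $\C_Q(5)$ is reduced, deduce $F_4\mid F_5$ in $k[p,q]$, read off the linear factor from the $q^2$-coefficients, and derive a contradiction by comparing lower coefficients. The paper is more explicit in two places where you write ``one checks'': (i) reducedness follows from $c_2^2+4c_1c_5=\phi_2^2(\phi_4^2-4\phi_6)$, which is nonzero because $\phi_6=0$ forces $\phi_4\neq 0$; (ii) after matching the $q^2$-coefficients, the paper compares the coefficient of $p^3q$ and finds that $F_4\mid F_5$ is equivalent to $\phi_4^2=4\phi_6$, hence to $\phi_4=\phi_6=0$, a contradiction. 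It is worth noting that both checks reduce to the \emph{same} numerical identity $\phi_4^2-4\phi_6\neq 0$, so the paper's argument is tighter than your sketch suggests. Your separate treatment of the nodal case via the boundary point $P_1$ with $\sigma(P_1)=-4Q=2Q\neq Q$ is correct and pleasant, but the paper's computation already covers it uniformly. The closing Galois remark about $\C_1$ being defined over~$k$ is fine but not needed for the statement as phrased.
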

\begin{proof}
Since $\C_0$ is projective, it contains a point in $\Omega = \pC_Q(5) - \C_Q(5)$, 
say $R$. By Corollary \ref{cor310}, part (4), we have $\varphi(\sigma(R)) = (0:1) \in \P^1$.
Suppose $\sigma$ does 
not send $\C_0$ to a horizontal curve. Then the composition $\varphi \circ \sigma$ sends 
$\C_0$ to $(0:1)$. From Lemma \ref{Qabove0} we find $\sigma(\C_0)=Q$ and we obtain $Q = \sigma(R) = -4Q$ or 
$Q = \sigma(R) = -5Q$ from Proposition \ref{imageinf}.
As the order of $Q$ is larger than $5$, we find that the order is $6$ 
and $\sigma(R) = -5Q$. 
 
We have $c_2^2+4c_1c_5 = \phi_2^2(9\phi_4^2 -4\phi_3\phi_4\psi + 4\phi_3^3)$. From equations \eqref{idPhis} we find 
\begin{equation}\label{simple7}
\phi_3\phi_4\psi-\phi_3^3 = \phi_4(\phi_4+\phi_2^2) - \phi_3^3 = \phi_4^2+\phi_5 = 2\phi_4^2+\phi_6,
\end{equation}
so the factor $9\phi_4^2 -4\phi_3\phi_4\psi + 4\phi_3^3$ equals 
\begin{equation}\label{phi4phi6}
9\phi_4^2 -4\phi_3\phi_4\psi + 4\phi_3^3 = 
9\phi_4^2 -4(2\phi_4^2+\phi_6) =
\phi_4^2-4\phi_6.
\end{equation}
As $\phi_6 = 0$ (together with $y_0 \neq 0$) implies $\phi_4 \neq 0$, we get  
$c_2^2+4c_1c_5 \neq 0$, which in turn, together with $c_1 \neq 0$, implies that $\C_Q(5)$ is reduced. 
Suppose that each component of $\pC_Q(5)$ maps under $\sigma$ to a fiber of $\varphi$. Then as above, 
we find $(\varphi \circ \sigma)(\pC_Q(5)) = (0:1)$ and as the composition 
$\varphi \circ \sigma$ is given by $(-F_5 : F_6)$, we find that 
$F_5$ vanishes on $\C_Q(5)$; as $\C_Q(5)$ is reduced, this implies that
if we view $F_4$ and $F_5$ as polynomials in $k[p,q]$ (cf. Remark \ref{F4F5F6polys}), then 
$F_5$ is a multiple of $F_4$. Viewing $F_4$ and $F_5$ as quadratic polynomials in $q$ over $k[p]$, and 
comparing the coefficients in $k[p]$ of $q^2$ in
\begin{align*}
\phi_2^3 F_4 &= \phi_2^2\phi_3 q^2 + (-3\phi_2\phi_4 p^2 + \ldots)q + \ldots , \\ 
\phi_2^3 F_5 &= \phi_2\big((\phi_2^2-2\phi_4)p - \psi l_1\big) q^2 + 
                \big((\phi_4\psi-4\phi_3^2)p^3 + \ldots \big)q + \ldots, 
\end{align*}
we find 
$$
\phi_2\phi_3 F_5 = \big((\phi_2^2-2\phi_4)p - \psi l_1\big) F_4.
$$
Comparing the coefficient of $p^3q$ in this equality gives
\begin{equation}\label{comprsides}
\phi_3 (\phi_4\psi-4\phi_3^2) = -3\phi_4(\phi_2^2-2\phi_4).
\end{equation}
Since $\phi_4-\psi\phi_3+\phi_2^2=0$ by equations \eqref{idPhis},
we find from \eqref{phi4phi6} that the difference of the two sides in \eqref{comprsides} equals 
$$
-3\phi_4(\phi_2^2-2\phi_4) - \phi_3 (\phi_4\psi-4\phi_3^2) + 3\phi_4(\phi_4-\psi\phi_3+\phi_2^2) = 9\phi_4^2 -4\phi_3\phi_4\psi + 4\phi_3^3 = \phi_4^2 - 4\phi_6.
$$
Hence, the equality \eqref{comprsides} is equivalent to $4\phi_6 = \phi_4^2$, so
we obtain $\phi_4 = \phi_6 =0$, a contradiction from which we conclude that not all 
components map to a vertical component. It follows that there is a second component, which is 
unique as $c_1 \neq 0$ implies that there are at most two components. This second component maps to 
a horizontal curve on $S$. 
\end{proof}

We say that two pairs $(X_1,Q_1)$ and $(X_2,Q_2)$ of a variety with a point on it are
isomorphic if there is an isomorphism from $X_1$ to $X_2$ that maps $Q_1$ to $Q_2$. 
For example, the involution $\iota \colon \P \to \P$ that sends 
$(x:y:z:w)\in \P$ to $(x:y:-z:w-z)$ fixes $Q$, so it induces an isomorphism, 
also denoted~$\iota$, from the pair $(S,Q)$ to $(\iota(S),Q)$;
the surface $\iota(S)$ is given by $y^2 = x^3+\tilde{f}(z,w)x +\tilde{g}(z,w)$, with 
\begin{align*}
\tilde{f}(z,w) &= f(-z,w-z) = f_0w^4 +(-4f_0-f_1) w^3z + \dots, \\
\tilde{g}(z,w) &= g(-z,w-z) = g_0w^6 +(-6g_0-g_1) w^5z + \dots.
\end{align*}
Note that $\iota$ fixes the points in the fiber above $(0:1)$ and it 
switches the fibers above $(1:1)$ 
and $(1:0)$. It also fixes $f_0$ and $g_0$ and it replaces $f_1$ and $g_1$ by 
$-4f_0-f_1$ and $-6g_0-g_1$, respectively. 

The following lemma is well known (see \cite[Proposition 8.2.8.]{CE} for $n=5$,  \cite[pp. 457]{Igusa_FS} for $n=3$, and \cite[Table 3]{Kub} for characteristic $0$). The lemma is used in Propositions \ref{outofhell5Q} and \ref{nohell3Q}.

\begin{lemma}\label{explicitunivcurve}
Let $E$ be an elliptic curve over $k$ and $n \in \{3,5\}$ an integer.  Let $P\in E(k)$ be a point of order $n$. 
Then there exist elements $\beta \in k$ and $e \in \{0,1\}$ such that the pair $(E,P)$ is isomorphic to the pair 
$(E',(0,0))$, with $E'$ given by 
$$
\begin{cases}
y^2+exy+\beta y = x^3 & \mbox{if } n=3, \\
y^2+(\beta+1)xy+\beta y = x^3+\beta x^2 & \mbox{if } n=5.
\end{cases} 
$$
\end{lemma}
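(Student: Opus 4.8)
The plan is to reduce a general pair $(E,P)$ with $P$ of order $n\in\{3,5\}$ to a Weierstrass form in which the coordinates of $P$ have been normalized to the origin, and then to read off the shape of the equation from the torsion condition. First I would put $E$ in a general (long) Weierstrass form and translate so that $P=(0,0)$; this is always possible over $k$ since $P$ is a $k$-rational point and translation by $-P$ is defined over $k$. A point at the origin on a curve $y^2+a_1xy+a_3y=x^3+a_2x^2+a_4x+a_6$ forces $a_6=0$, and the condition that $(0,0)$ lie on the curve together with the tangent-line computation at the origin will impose further vanishing among the remaining coefficients.

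The key mechanism is that the order of $(0,0)$ is governed entirely by the low-order behaviour of the group law near the origin, which can be made fully explicit. For $n=3$, the point $(0,0)$ has order $3$ exactly when it is a flex of the cubic, i.e. when the tangent line at $(0,0)$ meets $E$ only at $(0,0)$; imposing this on $y^2+a_1xy+a_3y=x^3+a_2x^2+a_4x$ forces $a_4=0$ and $a_2=0$ (after the flex normalization), leaving $y^2+a_1xy+a_3y=x^3$. Renaming $a_1=e$ and $a_3=\beta$ yields the claimed $n=3$ form. The remaining freedom is the scaling $x\mapsto u^2x$, $y\mapsto u^3y$ (which preserves the origin); I would use it to reduce $a_1=e$ to lie in $\{0,1\}$, distinguishing whether $e$ can be scaled to $0$ (the case $e=0$) or must stay nonzero (normalized to $e=1$). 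For $n=5$ the analogous but longer computation, expressing $2P$, $3P=-2P$ and the condition $2P=-3P$ in terms of the coefficients, pins down the Tate normal form $y^2+(\beta+1)xy+\beta y=x^3+\beta x^2$, which is exactly the statement.

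Concretely, I would invoke the standard reduction to \emph{Tate normal form}: any elliptic curve with a marked rational point that is not $2$-torsion can be put, after translation and the $(u^2,u^3)$-scaling, into the form $y^2+(1-c)xy-by=x^3-bx^2$ with the marked point at $(0,0)$, and the order of that point is then controlled by explicit polynomial relations in $b,c$. Specializing the order-$n$ condition for $n=3$ forces $b=0$ (giving $y^2+(1-c)xy=x^3$, matching the $n=3$ case with the binary parameter $e$ absorbing $c$), and for $n=5$ forces $b=c$ (giving precisely the displayed quintic form with $\beta=-b$ or an equivalent sign convention). The characteristic hypothesis $\mathrm{char}\,k\neq 2,3$ guarantees that the coordinate changes and the $(u^2,u^3)$-scaling are invertible and that the division-polynomial relations detecting order $3$ and $5$ are nondegenerate.

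The main obstacle I anticipate is not conceptual but bookkeeping: verifying the order-$n$ conditions and checking that the final normalization lands exactly on the stated equations requires either a careful hand computation with the division polynomials $\Psi_3$ and $\Psi_5$, or the Tate normal form machinery, and one must confirm that the residual scaling freedom is exactly what forces $e\in\{0,1\}$ in the $n=3$ case (there is no analogous binary parameter for $n=5$ because the Tate form is already rigid there). Since the lemma is classical and the references \cite{CE,Igusa_FS,Kub} already record these normal forms, I would keep the argument short: cite the Tate normal form, specialize the order condition, and apply the scaling to eliminate the last continuous parameter.
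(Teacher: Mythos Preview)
Your approach is essentially the same as the paper's: translate $P$ to the origin, normalize so the tangent line there is $y=0$ (forcing $a_6=a_4=0$), then impose the torsion condition to determine the remaining coefficients, and finally use the $(u^2,u^3)$-scaling. The paper carries this out explicitly by computing $-2P=(-a_2,0)$ and $3P=(-a_1+1,a_1-a_2-1)$ rather than citing Tate normal form, but the content is identical.

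One small correction: your unified Tate-normal-form paragraph is not quite right for $n=3$. The standard Tate normal form $y^2+(1-c)xy-by=x^3-bx^2$ is obtained by scaling so that $a_2=a_3$, which requires $a_2\neq 0$, i.e.\ order $\geq 4$; setting $b=0$ in that form kills $a_3$ and makes the curve singular at $(0,0)$. So for $n=3$ you should stay with your direct argument (flex condition $\Rightarrow a_2=0$, then scale $a_1$ to $0$ or $1$), which is correct and is exactly what the paper does.
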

\begin{proof}
After choosing an initial Weierstrass model for $E$, we may apply a linear change of variables to obtain a model $E'$ in which $P$ corresponds to $(0,0)$. Given that the order of $P$ is not $2$, we may also assume that the tangent line to the model $E'$ at $P$ is given by $y=0$. Then there are $a_1,a_2,a_3 \in k$ with $a_3 \neq 0$ such that $E'$ is given by $y^2+a_1xy+a_3y=x^3+a_2x^2$. 
We have $-2P = (-a_2,0)$, so $3P=0$, or, equivalently, $P = -2P$, holds if and only if $a_2=0$. 

If $n=3$, so $3P=0$, then either $a_1=0$ or $a_1\neq 0$, and in the latter case, we may scale $x$ and $y$ such that we have $a_1=1$. These two cases are exactly the claimed cases, with $\beta=a_3$ and $e=a_1$. 

If $n=5$, then we have $a_2\neq 0$ and $a_3\neq 0$, so we may scale $x$ and $y$ such that we have $a_2=a_3$. 
Then we have $3P=(-a_1 + 1 , a_1 - a_2 - 1 )$, so 
the property $5P=0$, or, equivalently, $3P = -2P$, yields $a_1=a_2+1$, which yields the claimed case with $\beta=a_2$. 
\end{proof}

\begin{prop}\label{outofhell5Q}
Suppose that the characteristic of $k$ is not $5$, and $5Q = \O$ in $S_0^\ns(k)$.
If no component of $\pC_Q(5)$ maps under $\sigma$ to a horizontal curve on $S$, then 
there exist $\beta, \delta \in k$ such that the pair $(S,Q)$ is isomorphic to the 
pair of Example \ref{ex:hell5Q}.
\end{prop}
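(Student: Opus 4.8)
The plan is to reduce the classification to an explicit computation: first normalize the fiber $\E_0$ together with the point $Q$, and then translate the non-horizontality hypothesis into polynomial conditions on the coefficients of $S$ that can be solved by elimination.

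\emph{Normalizing the fiber at $0$.} Since the characteristic is not $5$ and $5Q=\O$, the point $Q$ has exact order $5$ in $S_0^\ns(k)$; in particular $\phi_5=\Phi_5(x_0)=0$, which by \eqref{idPhis} reads $\phi_2^2\phi_4=\phi_3^3$, while $\phi_2,\phi_3,\phi_4\neq 0$. I would apply Lemma \ref{explicitunivcurve} with $n=5$ to the pair $(\E_0,Q)$ to bring it into the one-parameter Tate normal form, and then complete the cube and the square to return to the short Weierstrass shape of \eqref{maineq}. Using the residual scaling action $(x,y,z,w)\mapsto(u^2x,u^3y,\lambda z,w)$ on $\P$, and if necessary the involution $\iota$ (which fixes $\E_0$ and $Q$, fixes $f_0,g_0$, and alters $f_1,g_1$), I would arrange that $x_0,y_0,f_0,g_0$ take exactly the values in $\beta$ prescribed in Example \ref{ex:hell5Q}. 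The scaling in $u$ is then used up, whereas the scaling in $\lambda$ (which fixes $f_0,g_0,x_0,y_0$ but sends $f_i\mapsto\lambda^i f_i$ and $g_j\mapsto\lambda^j g_j$) remains available. This leaves $f_1,\dots,f_4,g_1,\dots,g_6$ and the parameter $\beta$ as the only unknowns.

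\emph{Reformulating the hypothesis.} Recall from Section \ref{multisection} that $\varphi\circ\sigma$ is given by $(-F_5:F_6)$, and from Remark \ref{F4F5F6polys} that $F_5,F_6$ are genuine polynomials in $k[p,q]$ on $\C_Q(4)\isom\A^2$. The assumption that no component of $\pC_Q(5)$ has horizontal image is equivalent to saying that $(-F_5:F_6)$ is constant on each component of $\pC_Q(5)$, i.e.\ that the restrictions of $F_5$ and $F_6$ to each component of $\C_Q(5)$ are linearly dependent over $\kbar$. Because $Q$ has order $5$ we have $c_1=\phi_2^2\phi_3\neq 0$, so \eqref{CQ5} is a genuine conic in $q$ and $\pC_Q(5)$ has at most two components. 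I would therefore reduce $F_5$ and $F_6$ modulo the ideal generated by \eqref{CQ5} (using it to eliminate $q^2$, so that each becomes at most linear in $q$ over $k[p]$), and encode the constancy of $(-F_5:F_6)$ on each component as polynomial identities among the $c_i$ and the coefficients of $F_5,F_6$. The target is the behaviour in Example \ref{ex:hell5Q}, where $F_6$ vanishes on all of $\pC_Q(5)$ and both components map birationally onto the cuspidal fiber over $(1:0)$; I expect to recover exactly this outcome.

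\emph{The computation and the main obstacle.} Substituting the normalized values of the first step, the conditions of the second step become a system of polynomial equations in $f_1,\dots,f_4,g_1,\dots,g_6$ over $k(\beta)$. I would solve this system by elimination (in practice with {\sc Magma}), aiming to show that its only admissible solution branch is $f_1=f_2=f_3=f_4=0$ and $g_1=g_2=g_3=g_4=g_6=0$ with $g_5$ free, which after renaming $g_5=\delta$ is exactly the pair of Example \ref{ex:hell5Q}. The main obstacle is twofold. First, the elimination is large, and keeping it tractable requires systematically exploiting the relation $\phi_2^2\phi_4=\phi_3^3$ together with the normal form of the first step. Second, and more essentially, the system carries spurious branches that must be discarded: these correspond either to $Q$ having order strictly smaller than $5$, which is excluded since $\mathrm{ord}(Q)=5$, or to $\E_0$ being singular. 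The singular branch is precisely the extra surface alluded to in the introduction; for an honest del Pezzo surface $S$ (that is, with $S$ smooth) it forces $S$ itself to be singular and is therefore ruled out, leaving Example \ref{ex:hell5Q} as the unique possibility up to isomorphism.
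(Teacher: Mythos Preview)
Your overall strategy (normalize the fiber via Lemma \ref{explicitunivcurve}, translate non-horizontality into divisibility conditions in $k[p,q]$, then eliminate) is the same as the paper's, and your normalization step is exactly right. But there is a genuine gap in your reformulation step that would make the elimination intractable as stated.

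You propose to encode ``$(-F_5:F_6)$ is constant on each component'' directly as polynomial identities. The problem is that the constants are \emph{a priori unknown}, and when $\pC_Q(5)$ has two components they may map to \emph{different} fibers. Reducing $F_5$ and $F_6$ modulo the ideal of $\C_Q(5)$ and asking that they be proportional only captures the case where both components map to the \emph{same} fiber; in general you would need $(\beta_1F_5+\alpha_1F_6)(\beta_2F_5+\alpha_2F_6)\equiv 0\pmod{F_4}$ for unknown $(\alpha_i:\beta_i)$, introducing extra parameters into an already heavy elimination. The paper avoids this entirely by invoking the results of Section~\ref{S:completion} that you did not use: every component of $\pC_Q(5)$ contains a point of $\Omega$, and by Proposition~\ref{imageinf} and Corollary~\ref{cor310} such points are sent by $\sigma$ to $-4Q=Q$ or $-5Q=\O$ on $S_0$. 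This pins down the possible target fibers. One first argues that not every component can map to the fiber over $(0:1)$ (else $S_0$ would be cuspidal, impossible for a point of order $5$ in characteristic $\neq 5$), so after an automorphism some component $\C_1$ maps to $(1:0)$. One then has a clean trichotomy for the remaining component: it maps to $(0:1)$, to some third point which can be moved to $(1:1)$, or also to $(1:0)$. In each case the divisibility condition becomes completely explicit (e.g.\ $F_4\mid F_5F_6$ or $F_4\mid(F_5+F_6)F_6$) with no free constants, and moreover Corollary~\ref{cor310} tells you in each case whether $S_0$ is singular or not, so the ``spurious'' singular-$\E_0$ branch is not an artifact of the computation but an honest geometric case (Case~(A)) handled by its own claim. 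Your mention of the involution $\iota$ is also well-placed but for a different reason than you suggest: in the paper it is used \emph{inside} the elimination (Claim~2) to halve the number of solution branches for $(f_1,g_1)$, not in the initial normalization.
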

\begin{proof}
If $E$ is an elliptic curve over $k$ with a point~$P$ of order~$5$, then by Lemma \ref{explicitunivcurve}, there 
exists a $\beta \in k$ such that $E$ is isomorphic to the 
elliptic curve given by $y^2+(\beta+1)xy+\beta y = x^3+\beta x^2$, with $P$ corresponding 
to the point $(0,0)$. 
A short Weierstrass model for this curve is given by $v^2 = u^3+Au+B$, 
with the point $(0,0)$ corresponding to $(u_0,v_0)$, where  
\begin{align*}
u_0&=3(\beta^2+6\beta+1),\\
v_0&=108\beta,\\
A&=-27(\beta^4+12\beta^3+14\beta^2-12\beta+1),\\
B&=54(\beta^2+1)(\beta^4+18\beta^3+74\beta^2-18\beta+1).
\end{align*}
If $S_0$ is smooth, then, as $Q$ has order $5$ on $S_0$ and isomorphisms between 
short Weierstrass models are all given by appropriate scaling of the coordinates, 
there are $\beta, \eta \in k$ such that 
\begin{equation}\label{param5}
(x_0,y_0,f_0,g_0)=(u_0\eta^2, v_0\eta^3, A\eta^4, B\eta^6).
\end{equation}
Another way to phrase this is that \eqref{param5} gives a parametrization of the quadruples $(x_0,y_0,f_0,g_0)$ with $y_0^2 = x_0^3+f_0x_0+g_0$ for which the associated fifth division polynomial 
$\Phi_5 \in k[f_0,g_0][x]$ vanishes at $x_0$. Hence, also in the case that $S_0$ is singular, there exist $\beta,\eta \in k$ for which \eqref{param5} holds. From $y_0 \neq 0$, we get $\beta,\eta \neq 0$.
Without loss of generality, we assume $\eta = 1$.
The fiber $S_0$ is singular if and only if $D = \beta(\beta^2+11\beta-1)$ is zero, and in this case $S_0$ is nodal. Note that because $Q$ has order $5$, we have $\phi_5 = 0$ and $\phi_3, c_1 \neq 0$. 

We first state two claims, both with a computational proof. 

\noindent {\bf Claim 1}: {\em  If $D=0$ and $F_4$ divides $F_5F_6$, then $S$ is singular.}

\begin{computations}
{\em Proof}. Since the main coefficient $c_1$ of $F_4$ as a polynomial 
in $q$ over $k[p]$ is invertible, we can compute (by computer, with $x_0, f_0, \ldots, f_4, g_0, \ldots, g_6$ independent transcendentals) the remainder of $F_5F_6$ upon division by $F_4$, which is a polynomial $L = \mu q +\nu$, with $\mu,\nu \in k[p]$
of degree $9$ and $11$, respectively. Our special values of $x_0,f_0,g_0$ already imply that the coefficients of $p^{11}$ and $p^9q$ in $L$ specialize to $0$, and the fact that $F_4$ divides $F_5F_6$ implies that $L$ specializes to $0$. We consider two cases, based on the characteristic of $k$. 

\noindent {\bf Case 1}: the characteristic of $k$ is not $11,17,23$, or $29$. 

In this case, the vanishing of the (specialization of the) coefficients of $p^8q$, $p^7q$, $p^6q$, and $p^5q$ in $L$ determine, in that order, the values of $f_1, f_2, f_3$, and $f_4$ in terms of $g_0, \ldots, g_6$. The vanishing of the coefficient of $p^8$ then implies that we have one of two subcases: (a) $g_1=0$ or (b) $g_2 = \lambda g_1^2$ for some specific constant $\lambda$. 

Assume we are in subcase (a), i.e.,  $g_1=0$. The vanishing of the coefficient of $p^7$ yields $g_2=0$; then the vanishing of the coefficients of $p^5$ and $p^3q$ implies $g_3=g_6=0$, and finally the vanishing of the coefficients of $p^3$ gives $g_4=0$, which shows that the pair $(S,Q)$ is isomorphic to the pair in Example \ref{ex:hell5Q}, with $\delta = g_5$, though $F_6$ vanishes on both components $\C_0$ and $\C_1$, and $S$ is singular.

Assume we are in subcase (b), i.e.,  $g_2 = \lambda g_1^2$. We may assume $g_1\neq 0$, and the vanishing of the coefficients of $p^7$, $p^6$, $p^5$, and finally $p^3q$, express $g_3, g_4, g_6, g_5$, in that order, in terms of the remaining unknown coefficients of $g$, which in the end yields a surface $S$ that is singular. \par

\noindent {\bf Case 2}: the characteristic of $k$ is not $7,13$, or $19$. 

As in case 1, we similarly solve for the parameters $f_1, \ldots, f_4$ and $g_1, \ldots, g_6$, except that we start by expressing $g_1,\ldots, g_4$ in terms of $f_1, \ldots, f_4$. We conclude also in these characteristics that $S$ is singular, thus proving the claim. 
\end{computations}

\noindent {\bf Claim 2}: {\em If $D \neq 0$ and $\C_Q(5)$ is reduced and $F_4$ divides $(F_5+F_6)F_6$, then either $S$ is singular, or there exists a $\delta \in k$, such that 
the pair $(S,Q)$ is isomorphic to the pair of Example \ref{ex:hell5Q}. }

\begin{computations}
{\em Proof}. Since $\C_Q(5)$ is reduced, i.e., $F_4$ has no multiple factors, the condition that 
$F_4$ divides $(F_5+F_6)F_6$ is equivalent to all components of $\C_Q(5)$ being sent under 
the map $\varphi \circ \sigma$ to $(1:1)$ or $(1:0)$. As the isomorphism $\iota$ described before 
Lemma \ref{explicitunivcurve} switches the fibers above these two points, 
the hypotheses of this claim hold for 
the pair $(S,Q)$ if and only if they hold for the pair $(\iota(S),Q)$. 
Hence, without loss of generality we may apply $\iota$ at some point. 

Viewing $F_4,F_5,F_6$ as polynomials in $q$ 
over $k[p]$, we find that generically, say over a field in which $x_0,f_0, \ldots, f_4, g_0, \ldots, g_6$ are independent transcendentals, there are 
$d_0, \ldots ,d_{10}$ and $e_0, \ldots, e_{12}$, in terms of these transcendentals, such that 
$$
(F_5+F_6)F_6 \equiv  (d_{10}p^{10}+\dots + d_1p+d_0)q+ e_{12}p^{12}+\dots + e_1p+e_0 \quad \pmod {F_4}.
$$ 
The fact that $Q$ has order $5$ implies that $d_{10},d_{9},e_{12},e_{11}$ specialize to $0$. 
In our case, the other coefficients $d_0, \ldots ,d_{8}, e_0, \ldots, e_{10}$ specialize to $0$ as well. 
We claim that from the fact that $e_{10}$ and $d_8$ specialize to $0$, it follows that 
\begin{equation}\label{pairs}
\left\{
\begin{array}{l}
f_1 =0\\
g_1 = 0
\end{array}
\right.\qquad \mbox{or}\qquad
\left\{
\begin{array}{l}
f_1 = -4f_0\\  
g_1 =-6g_0
\end{array}
\right. \qquad \mbox{or} \qquad 
\left\{
\begin{array}{l}
f_1 = - 2f_0 - 54\gamma^{-1}\lambda \\
g_1 = -3g_0 + 54\gamma^{-1}\mu
\end{array}
\right.
\end{equation}
for some element $\gamma \in k$ with $\gamma^2 = 5$ and with 
\begin{align*}
\lambda=&\,\,(\beta^2+1)(\beta^2+10\beta-1),\\
\mu=&\,\,3(\beta^6 + 16\beta^5 + 49\beta^4 - 40\beta^3 - 49\beta^2 + 16\beta - 1).
\end{align*}
Indeed, for any $\gamma$ in an extension of $k$ with $\gamma^2=5$ and $\omega = \frac{1}{2}(3-\gamma)$, 
 the linear combinations
\begin{align*}
\frac{1}{2^5 3^3}& \phi_2^4\left((3\beta-1)(7\beta+1)d_8 + 180\beta(11\beta-2)e_{10}\right) \\
 \quad & = \big(3g_0f_1-2f_0g_1\big)\cdot\big((f_1+2f_0)\mu+(g_1+3g_0)\lambda\big)\quad \mbox{and}\\
\frac{1}{4}&\phi_2^4\left( \omega^4\beta^2 + (2\gamma-4)\beta + \omega^{-1} \right) (d_8 + 36\omega e_{10}) \\
\quad &=  \big( \gamma(3g_0f_1-2f_0g_1) - 54(g_1\lambda+f_1\mu) \big) \\
  & \qquad \cdot \big( \gamma(3g_0f_1-2f_0g_1) - 54((g_1+6g_0)\lambda+(f_1+4f_0)\mu) \big)
\end{align*}
of $d_8$ and $e_{10}$ factor into two linear factors. Therefore, the vanishing of $d_8$ and $e_{10}$ 
implies the vanishing of one of the first two factors and one of the second two. The four combinations 
give four systems of two linear equations in the two variables $f_1$ and $g_1$. For each combination, the 
determinant of the system is a nonzero multiple of $D$ and therefore nonzero itself. The systems 
yield exactly the four claimed pairs for $(f_1,g_1)$ in \eqref{pairs}. 

Note that as the isomorphism $\iota$ replaces $f_1$ and $g_1$ by $-4f_0-f_1$ and 
$-6g_0-g_1$, respectively, it switches the first two cases in \eqref{pairs}, 
as well as the last two cases given by the third pair for $\pm \gamma$. Therefore, 
after applying the isomorphism $\iota$ if necessary, we may assume we have only two 
subcases. 

\noindent {\bf Case 1}: We have $(f_1,g_1) = (0,0)$.

The equations $d_7=e_9=0$ determine a system of two linear equations in $f_2$ and $g_2$, 
of which the determinant is a nonzero multiple of $D$ and therefore nonzero itself. 
The unique solution is $f_2=g_2=0$. Subsequently, the system $d_6=e_8=0$ gives 
$f_3=g_3=0$ and then the system $d_5=e_7=0$ yields $f_4=g_4=0$. At this point, 
the coefficients $d_4$ and $e_6$ specialize to $0$ automatically, and the equation $d_3=0$ 
determines $g_6=0$. With $g_5=\delta$, we obtain exactly the surface of 
Example \ref{ex:hell5Q}. 

\noindent {\bf Case 2}: We have 
$(f_1,g_1) =  (- 2f_0 - 54\gamma^{-1}\lambda, -3g_0 + 54\gamma^{-1}\mu)$. 

As in the previous subcase, the linear systems $d_{9-i}=e_{11-i}=0$ determine 
$f_i$ and $g_i$ inductively for $i=2,3,4$. Again, the 
coefficients $d_4$ and $e_6$ then specialize to $0$ automatically. Finally, the 
system $d_3=e_6=0$ is linear in $g_5$ and $g_6$ and determines these
two parameters uniquely. However, this yields a surface $S$ that is singular. 
More specifically, 
the associated minimal elliptic surface has two singular fibers of type $I_5$.
This proves the claim. 
\end{computations}

We continue the proof of the proposition. 
Suppose no component of $\pC_Q(5)$ maps under $\sigma$ to a horizontal curve on $S$, so $\varphi \circ \sigma$ has finite image.
If we had $\varphi(\sigma(\pC_Q(5)) = (0:1)$, then we would have $\sigma(\pC_Q(5)) = Q=-4Q$ by Lemma \ref{Qabove0}, so by Corollary \ref{cor310}, part (3), the fiber $S_0$ would be cuspidal. From this contradiction we conclude that there is a component $\C_1$ with $\varphi(\sigma(\C_1)) \neq (0:1)$. 
Without loss of generality, we assume $\varphi(\sigma(\C_1)) = (1:0) =:\infty$ and we write $S_\infty = \varphi^{-1}(\infty)$.

We will distinguish the following three cases. 
\begin{itemize}
\item[(A)] There is a component $\C_0$ of $\pC_Q(5)$ with $\varphi(\sigma(\C_0)) =(0:1)$.
\item[(B)] There is a component $\C_0$ of $\pC_Q(5)$ with $\varphi(\sigma(\C_0)) \neq (0:1),(1:0)$.
\item[(C)] There is no component $\C_0$ of $\pC_Q(5)$ with $\varphi(\sigma(\C_0)) \neq (1:0)$.
\end{itemize}
Since $c_1 \neq 0$, the curve $\pC_Q(5)$ has at most two components and both are reduced if there are two, so in cases (A) and (B), the components are $\C_0$ and $\C_1$, and $\C_Q(5)$ is reduced.

We start with case (A).
Assume that there is a component $\C_0$ of $\pC_Q(5)$ with $\varphi(\sigma(\C_0)) = (0:1)$.  From Lemma \ref{Qabove0} we find $\sigma(\C_0) = Q =-4Q$, and as $\C_0$ contains points of $\Omega$, we conclude that $S_0$ is singular from Corollary \ref{cor310}, part (1), so $\beta^2+11\beta-1=0$. 
If we consider $F_4, F_5$, and $F_6$ as polynomials in $q$ over $k[p]$ (cf.~Remark~\ref{F4F5F6polys}), then $F_5$ and $F_6$ vanish on $\C_0$ and $\C_1$, respectively, so $F_4$ divides $F_5F_6$. Claim 1 implies that $S$ is singular, a contradiction.

We continue with case (B).
Assume that there is a component $\C_0$ of $\pC_Q(5)$ with $\varphi(\sigma(\C_0)) \neq (0:1), (1:0)$. After applying an automorphism of the 
base curve $\P^1$ that fixes $(0:1)$ and $(1:0)$, we may assume $\varphi(\sigma(\C_0)) = (1:1)$, 
so that $F_5+F_6$ and $F_6$ vanish on $\C_0$ and $\C_1$, respectively, and the product 
$(F_5+F_6)F_6$ is divisible by $F_4$. 
Since the points in $\Omega = \pC_Q(5) - \C_Q(5)$ 
map under $\sigma$ to $S_0$, and $\varphi(S_0-\{\O\})=(0:1)$, the points in $\Omega$ map under $\sigma$ to $\O=-5Q$; it follows from Corollary \ref{cor310}, part (1), that $S_0$ is smooth, 
so we find $D \neq 0$. Hence, we are done by Claim 2. 

We finish with case (C). In that case, we have $\sigma(\pC_Q(5)) \subset S_\infty$, 
so $F_6$ vanishes on $\C_Q(5)$, and also $\sigma(\Omega) \subset S_\infty$. 
From Proposition \ref{imageinf} we conclude $\sigma(\Omega) \subset S_0 \cap 
S_\infty = \{\O\} = \{-5Q\}$; it follows from Corollary \ref{cor310}, part (1), that 
$S_0$ is smooth, so we find $D \neq 0$. 
From \eqref{simple7} we obtain 
$$
c_2^2+4c_1c_5 = \phi_2^2(9\phi_4^2 -4\phi_3\phi_4\psi + 4\phi_3^3)= 
  \phi_2^2(9\phi_4^2 -4(\phi_4^2+\phi_5)) = \phi_2^2(5\phi_4^2 -4\phi_5).
$$
As $\phi_5 = 0$ (together with $y_0 \neq 0$) implies $\phi_4 \neq 0$, we get  
$c_2^2+4c_1c_5 \neq 0$, which in turn, together with $c_1 \neq 0$, implies that $\C_Q(5)$ is reduced.
Therefore, $F_6$ is a multiple of $F_4$, so we are done by claim~2. 
\end{proof}

Indeed, in characteristic $5$, there are other examples than those mentioned in Proposition \ref{outofhell5Q} where $Q$ has order $5$ and no component of $\pC_Q(5)$ maps under $\sigma$ to a horizontal curve on $S$ (see Example \ref{ex:char5}). 
It takes less computational force to deal with the case that $Q$ has order $4$.

\begin{prop}\label{nohell4Q}
Suppose $4Q=\O$ in $S_0^\ns(k)$. Then $\C_Q(5)$ has a component that maps under $\sigma \colon \C_Q(5) \dashrightarrow S$ to a horizontal curve on $S$.
\end{prop}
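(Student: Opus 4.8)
The plan is to use the special shape that the images of the limit points take when $4Q = \O$, rather than the explicit computation employed for order $5$ in Proposition \ref{outofhell5Q}. The standing assumption $y_0 \neq 0$ means $Q$ is not $2$-torsion, so $4Q = \O$ forces $Q$ to have order exactly $4$ in $S_0^\ns(k)$; in particular $\phi_3 \neq 0$. Reducing the two possible limit images modulo $4Q = \O$, I record that $-4Q = \O$ and $-5Q = -Q$, and that $-Q$ is distinct from both $\O$ (as $Q \neq \O$) and $Q$ (as $2Q \neq \O$). I would also note that $S_0$ cannot be cuspidal: the smooth locus of a cuspidal fibre is isomorphic to $\mathbb{G}_a$, which has no nontrivial $4$-torsion in characteristic different from $2$, and so cannot contain the order-$4$ point $Q$. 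Hence $S_0$ is smooth or nodal.

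Next I would exhibit a point $P^* \in \Omega = \pC_Q(5) - \C_Q(5)$ with $\sigma(P^*) = -Q$. By Proposition \ref{imageinf}, every $P \in \Omega$ satisfies $\sigma(P) \in \{-4Q, -5Q\} = \{\O, -Q\}$, and $\sigma(P) = -4Q$ can occur only if $S_0$ is cuspidal or $S_0$ is nodal and $P = P_1$ as in Lemma \ref{L:E0node}. If $S_0$ is smooth, then $\sigma(P) = -5Q = -Q$ for every $P \in \Omega$, and $\Omega$ is nonempty since the affine curve $\C_Q(5)$ is positive-dimensional and therefore not complete; I take $P^*$ to be any point of $\Omega$. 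If $S_0$ is nodal, then $\phi_3 \neq 0$ guarantees the existence of the second limit point $P_2$ of Lemma \ref{L:E0node}, and Proposition \ref{imageinf} gives $\sigma(P_2) = -5Q = -Q$; I set $P^* = P_2$.

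Finally I would take an irreducible component $\C_0$ of $\C_Q(5)$ whose closure $\overline{\C_0}$ in $\pC_Q(5)$ contains $P^*$, which exists because $\pC_Q(5)$ is the closure of $\C_Q(5)$. It suffices to show that $\sigma$ does not contract $\C_0$ to a fibre of $\varphi$, for then $\sigma(\C_0)$ is horizontal. Suppose, for contradiction, that $\varphi \circ \sigma$ is constant on $\C_0$. Since $\sigma$ is defined at $P^*$ with $\sigma(P^*) = -Q \in U$, a point of $S_0$ other than the base point $\O$, the composite $\varphi \circ \sigma$ is a morphism near $P^*$ with value $\varphi(-Q) = (0:1)$; as $\overline{\C_0}$ is irreducible and $\C_0$ is a dense open subset on which $\varphi \circ \sigma$ is constant, that constant must equal $(0:1)$. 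Thus $\varphi(\sigma(\overline{\C_0})) = (0:1)$, so Lemma \ref{Qabove0} gives $\sigma(\overline{\C_0}) = Q$, and in particular $\sigma(P^*) = Q$. This contradicts $\sigma(P^*) = -Q \neq Q$, so $\C_0$ maps to a horizontal curve on $S$.

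The only genuinely delicate step is the middle one: ensuring that some limit point maps to $-Q$ and not merely to $\O$. This is exactly where the hypothesis is used, through the exclusion of the cuspidal case and the existence of $P_2$, both governed by $\phi_3 \neq 0$. It is also precisely this step that breaks down for order $5$, where one has $-5Q = \O$ and $-4Q = Q$, so that every limit point maps to $Q$ or $\O$ and the contradiction evaporates; this is consistent with the existence of order-$5$ examples with no horizontal component.
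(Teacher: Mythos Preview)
Your proof is correct and follows essentially the same approach as the paper: rule out the cuspidal case, use Proposition~\ref{imageinf} (equivalently Corollary~\ref{cor310}) to locate a limit point $P^* \in \Omega$ with $\sigma(P^*) = -5Q = -Q$, and then derive a contradiction from Lemma~\ref{Qabove0} if the component through $P^*$ were vertical. The only cosmetic difference is that the paper cites Corollary~\ref{cor310} directly, whereas you unpack the smooth and nodal cases by hand via Proposition~\ref{imageinf} and Lemma~\ref{L:E0node}; your closing paragraph on why the argument fails for order~$5$ is a nice addition but not needed for the proof.
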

\begin{proof}
First note that the fiber $S_0$ does not have a cusp, as the additive reduction together with the identity $4Q=\O$ would imply that the characteristic of $k$ is $2$, which it is not by assumption. Therefore, by Corollary \ref{cor310}, parts (1) and (2), at least one of the points in $\Omega$ maps to $-5Q=-Q$. 
Let $R$ be such a point and let $\C_0$ be a component of $\pC_Q(5)$ that contains $R$. 
Suppose that $\C_0$ is sent by $\sigma$ to a fiber on $S$, so that $\varphi(\sigma(\C_0))$ is a point on $\P^1$. 
From $\sigma(R) = -Q \in S_0$ we conclude $\varphi(\sigma(\C_0)) = (0:1)$, and Lemma \ref{Qabove0} implies $\sigma(\C_0) = Q$, 
which contradicts $\sigma(R) =-Q$, so $\C_0$ is sent to a horizontal curve on~$S$. 
\end{proof}

Finally, we deal with the case that $Q$ has order $3$. 

\begin{prop}\label{nohell3Q}
Suppose $3Q=\O$ in $S_0^\ns(k)$. Then $\C_Q(5) \subset \A^2(p,q)$ has a unique component that projects birationally to $\A^1(p)$. If this component maps under  
$\sigma \colon \C_Q(5) \dashrightarrow S$ to a vertical curve on $S$, then the pair $(S,Q)$ is isomorphic to one of the pairs described in Examples \ref{ex:order3} and \ref{ex:order3sec}.
Moreover, if another component of $\C_Q(5)$ maps to a vertical curve on $S$ as well, then the pair $(S,Q)$ is isomorphic to one of the pairs described in Subexamples \ref{ex:order3}(iii) and  \ref{ex:order3sec}(iii).
\end{prop}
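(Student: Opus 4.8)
The plan is to use the order-$3$ hypothesis to pin down the shape of $\C_Q(5)$, then translate ``maps to a vertical curve'' into a polynomial divisibility, and finally reduce the classification to a finite computation after normalising $(S_0,Q)$ with Lemma \ref{explicitunivcurve}.

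\emph{The shape of $\C_Q(5)$ (first assertion).} Since $3Q=\O$ we have $\phi_3=0$, so \eqref{idPhis} gives $\phi_4=\psi\phi_3-\phi_2^2=-\phi_2^2$, which is nonzero because $\phi_2=4y_0^2\neq0$. Hence in \eqref{CQ5} the coefficient $c_1=\phi_2^2\phi_3$ vanishes while $c_2=-3\phi_2\phi_4=3\phi_2^3\neq0$, so $\C_Q(5)$ is cut out by a polynomial proportional to $A(p)q-B(p)$ that is linear in $q$, with $A=c_2p^2+c_3p+c_4$ of degree exactly $2$ and $B=c_5p^4+\dots+c_9$. Writing $G=\gcd(A,B)$, $A=GA_1$, $B=GB_1$ with $A_1,B_1$ coprime, the polynomial $A(p)q-B(p)$ factors as $G\cdot(A_1q-B_1)$. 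The factor $A_1q-B_1$ is geometrically irreducible (degree $1$ in $q$ with coprime coefficients) and is the graph of $q=B_1(p)/A_1(p)$, hence defines the unique component projecting birationally to $\A^1(p)$; every remaining component is a vertical line $\{p=p_0\}$ over a common root $p_0$ of $A,B$, which projects to a point. I denote the distinguished component by $M$.

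\emph{Reformulation via $F_5$.} Because $Q$ has order exactly $3$ we have $4Q,5Q\neq\O$, so Corollary \ref{cor310}(4) gives $\varphi(\sigma(\Omega))=\{(0:1)\}$. The closure $\overline M\subset\pC_Q(5)$ is projective, hence meets $\Omega$; if $\sigma$ sends $M$ into a fibre of $\varphi$, the image of such a limit point forces that fibre to be $S_0=\varphi^{-1}(0:1)$, and Lemma \ref{Qabove0} yields $\sigma(M)=Q$. Since $\varphi\circ\sigma=[-F_5:F_6]$, this is equivalent to $F_5$ vanishing on $M$, i.e.\ to the divisibility $(A_1q-B_1)\mid F_5$ in $k[p,q]$, where $F_5$ is viewed as a polynomial in $p,q$ via Lemma \ref{CQ4A2} and Remark \ref{F4F5F6polys}. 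The same argument applies to a vertical component $\{p=p_0\}$: its point at infinity is the limit point over the singular point of $\P(1,2,1)$, which by Lemma \ref{imagetangent} and Proposition \ref{imageinf} maps under $\sigma$ to $-5Q\in S_0$; so the line is contracted into a fibre precisely when $F_5(p_0,q)\equiv0$ in $q$.

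\emph{Normalisation and computation (second and third assertions).} By Lemma \ref{explicitunivcurve} with $n=3$, followed by passage to a short Weierstrass model exactly as in the proof of Proposition \ref{outofhell5Q}, we may assume up to isomorphism of $(S,Q)$ that $(x_0,y_0,f_0,g_0)$ is the quadruple of Example \ref{ex:order3sec} (the case $e=0$, giving $x_0=f_0=0$) or of Example \ref{ex:order3} (the case $e=1$, giving $x_0=3$); the higher coefficients $f_1,\dots,f_4,g_1,\dots,g_6$ remain as parameters. Substituting $q=B_1(p)/A_1(p)$ into $F_5$ and clearing denominators turns $(A_1q-B_1)\mid F_5$ into the vanishing of a polynomial in $p$ whose coefficients are polynomials in $f_1,\dots,g_6$; solving this system (by computer, as in Proposition \ref{outofhell5Q}) shows that the surfaces satisfying it are exactly those of Examples \ref{ex:order3} and \ref{ex:order3sec}. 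For the last assertion one imposes in addition that $F_5(p_0,q)\equiv0$ for a common root $p_0$ of $A$ and $B$, i.e.\ that $F_5$ vanish on a second component; carrying the same elimination through with this extra condition isolates precisely Subexamples \ref{ex:order3}(iii) and \ref{ex:order3sec}(iii), where $\C_Q(5)$ is $p^2(\beta q-p^2)=0$, respectively $p^2q=0$, and $F_5$ vanishes on every component. The main obstacle is this elimination: as in the order-$5$ case, the divisibility expands into a large system in the twelve coefficients $f_1,\dots,g_6$, and one must show that in each normal form and every characteristic $\neq2,3$ its solution set is exactly the union of the Example families, with the branch into subexamples governed by how $A$ and $B$ share roots. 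I expect, as there, that the system becomes effectively linear once the leading coefficients are used to eliminate variables in a suitable order, so that the computation is lengthy but mechanical and splits into finitely many characteristic-dependent cases.
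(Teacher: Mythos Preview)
Your proposal is correct and follows essentially the same route as the paper's own proof: deduce $c_1=0$, $c_2\neq0$ from $\phi_3=0$ to identify the unique component projecting birationally to $\A^1(p)$; use Corollary~\ref{cor310}(4) and Lemma~\ref{Qabove0} to convert ``maps to a vertical curve'' into the vanishing of $F_5$ on that component; normalise $(x_0,y_0,f_0,g_0)$ via Lemma~\ref{explicitunivcurve} into the two cases $e=0,1$; and then solve the resulting polynomial system. The paper forms the single polynomial $L=\delta_1 n^2+\delta_2 mn+\delta_3 m^2$ (equivalently $m^2 F_5(p,n/m)$, which coincides with your substitution since $B_1/A_1=n/m$) and reads off the coefficients of $L$ in decreasing degree, using along the way an automorphism $(z:w)\mapsto(2z:\delta z+2w)$ of $\P^1$ to arrange $f_1=g_1=0$ (respectively $g_1=0$ when $e=0$); you should be prepared to insert such a $\P^1$-automorphism step in your elimination, as it is what makes the system triangular.
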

\begin{proof}
If $E$ is an elliptic curve over $k$ with a point $P \in E(k)$ of order $3$, then by Lemma \ref{explicitunivcurve}, there exist elements $\beta' \in k$ and $e \in \{0,1\}$ such that $E$ is isomorphic to the elliptic curve given by $y^2+exy+\beta' y = x^3$, with the point $P$ corresponding to $(0,0)$. 
Associated short Weierstrass models are 
given by $v^2 = u^3 + A_eu+B_e$, with the point $(0,0)$ corresponding to $(u_e,v_e)$, where 
\begin{align*}
u_e&=3e,\\
v_e &= \beta,\\
A_e & = (6\beta-27)e,\\
B_e &= \beta^2-18(\beta-3)e,
\end{align*}
and where $\beta = 108\beta'$. We also simplified these expressions using $e^2=e$. 
Since $Q$ has order $3$ in $S^\ns(k)$, we find, as in the proof of Proposition \ref{outofhell5Q}, 
that there are $\beta, \eta \in k^*$ and $e\in \{0,1\}$, such that 
$$
(x_0,y_0,f_0,g_0) = (u_e\eta^2,v_e\eta^3,A_e\eta^4,B_e\eta^6),
$$
also in the case that $S_0$ is singular; the property $\beta \neq 0$ follows from the assumption $y_0 \neq 0$. 
Without loss of generality we assume $\eta=1$.

Any component $\C_0$ of $\pC_Q(5)$ contains a point $R \in \Omega = \pC_Q(5) - \C_Q(5)$, which satisfies $\varphi(\sigma(R)) = (0:1)$ by Corollary \ref{cor310}, part (4), so if $\sigma(\C_0)$ is contained in a fiber of $S \dashrightarrow \P^1$, then $F_5$ vanishes on $\C_0$. 

As the order of $Q$ is $3$, we have $\phi_3=0$ and thus $\phi_2\phi_4 \neq 0$, so $c_1 = 0$ and $c_2 \neq 0$, and 
the curve $\C_Q(5)$ is given by $mq=n$ with 
$$
m = c_2p^2+c_3p+c_4 \qquad \mbox{and} \qquad n = c_5p^4+ c_6p^3+c_7p^2+c_8p+c_9. 
$$
From $c_2 \neq 0$, we find that $m$ is not identically $0$, so indeed, 
there is a unique  component of $\C_Q(5)$, say $\C_1$, 
that projects birationally to $\A^1(p)$. Assume that $\sigma(\C_1)$ is contained in a fiber on $S$. Then $F_5$ vanishes
on $\C_1$ by the above. If we write 
$F_5$ as $F_5 = \delta_1 q^2+ \delta_2 q +\delta_3$, with $\delta_j \in k[p]$ of degree $2j-1$, then we find that  
$$
L = \delta_1 n^2 + \delta_2mn + \delta_3 m^2 
$$
vanishes. 

In the case $e=1$, so $(x_0,y_0,f_0,g_0) = (u_1,v_1,A_1,B_1)$, we claim that the pair $(S,Q)$ is isomorphic to one of the pairs of Example \ref{ex:order3}. We sketch a sequence of computations that proves the claim. 

\begin{computations}
A priori, say over a field in which the elements 
$x_0, f_0, \ldots, f_4, g_0, \ldots, g_6$ are independent transcendentals, 
the polynomial $L\in k[p]$ has degree $9$, but from $\phi_3=0$, it already follows that the 
degree is at most $8$. We will use the vanishing of all coefficients to identify the pair $(S,Q)$. 

The vanishing of the coefficient of $p^8$ in $L$ gives $3f_1g_0 = 2f_0g_1$. Since $f_0$ and $g_0$ 
do not both vanish, there is a $\delta \in k$ such that 
$f_1 = 2\delta f_0$ and $g_1 = 3\delta g_0$. After applying an automorphism of $\P^1$ given by 
$(z:w) \mapsto (2z: \delta z +2w)$, we may assume without loss of generality that $\delta=0$, so 
$f_1=g_1=0$. Then the vanishing of the coefficient of $p^7$ in $L$
shows that there is an $\alpha_1 \in k$ such that $f_2 = (18-3\beta)\alpha_1$ and 
$g_2 = (15\beta-54)\alpha_1$.
The coefficient of $p^6$ now vanishes automatically and the vanishing of the coefficient of $p^5$ yields
$g_4=(\beta - 3)(f_4 - 3\alpha_1^2)$. Subsequently, the vanishing of the coefficient of $p^4$ gives 
$g_5= \big((2\beta - 9)f_3- 3g_3\big)\alpha_1\beta^{-1}$. Then the coefficient of $p^3$ vanishes automatically. 
The vanishing of the coefficient of $p^2$ yields 
$f_4= - 3\alpha_1^2$ or $3g_3 = (\beta - 9)f_3$, but in the latter case, the vanishing of the coefficient 
of $p$ yields the former, so we have $f_4=-3\alpha_1^2$ in any case.  
Finally, the vanishing of the coefficient of $p$ gives $3g_3 = (\beta - 9)f_3$ or $\alpha_1=0$; 
the former case yields Example \ref{ex:order3}(i) with $\alpha_2=\frac{1}{3}f_3$ and $\alpha_3=g_6$, 
while the latter case yields Example \ref{ex:order3}(ii) with $\alpha_4=\frac{1}{3}f_3$, 
$\alpha_5 = g_3$ and $\alpha_6 = g_6$. This proves the claim.
\end{computations}

We continue (still) with $e=1$. 
Suppose we are in the case of Example \ref{ex:order3}(i). If $\sigma$ sends one of the components of $\C_Q(5)$ given by $p^2-\beta\alpha_1=0$ to a fiber of $\varphi$, then the first argument of this proof shows that 
$\frac{1}{3}\beta F_5=p(q+\alpha_1)(\beta q - p^2 + 2\beta\alpha_1)$ vanishes on this component, which implies $\alpha_1=0$, 
so the pair $(S,Q)$ belongs to the family described in Example \ref{ex:order3}(iii). 
Now suppose we are in the case of Example \ref{ex:order3}(ii). If $\sigma$ sends the component of $\C_Q(5)$ given by $p=0$ to a fiber of $\varphi$, then similarly $F_5=3\beta^{-1}q(\beta pq - p^3 + (\beta - 9)\alpha_4 - \alpha_5)$ vanishes on this component, which implies $\alpha_5 = (\beta-9)\alpha_4$, so again the pair $(S,Q)$ belongs to the family described in Example \ref{ex:order3}(iii). This finishes the case $e=1$.

We now consider the case $e=0$, so $(x_0,y_0,f_0,g_0) = (0,\beta,0,\beta^2)$. 
We claim that the pair $(S,Q)$ is isomorphic to one of the pairs of Example \ref{ex:order3sec}. We sketch a sequence of computations that proves the claim. 

\begin{computations}
Since $g_0 \neq 0$, we may apply an automorphism of $\P^1(z,w)$ given by $(z:w) \mapsto (6g_0z: g_1 z + 6g_0w)$ to reduce to the case $g_1=0$.
As in the case $e=1$, we will use the vanishing of all coefficients in $L$ 
to identify the pair $(S,Q)$. 
The vanishing of the coefficients of $p^8$, $p^7$, $p^5$, and $p^4$ yields $f_1=g_2=f_4=g_5=0$. 
Then the vanishing of the coefficient of $p^2$ yields $f_3g_4=0$; if $f_3=0$, then the vanishing of the coefficient of $p$ in $L$ gives $g_4=0$, so we have $g_4=0$ in any case. Then the vanishing of the coefficient of $p$ gives $f_2=0$ or $f_3=0$ and these cases correspond to Subexamples \ref{ex:order3sec}(ii) and \ref{ex:order3sec}(i), respectively. This proves the claim.
\end{computations}

Suppose we are in the case of Example \ref{ex:order3sec}(i), so $f_3=0$ and $f_2=\alpha$. If $\sigma$ sends one of the components of $\C_Q(5)$ given by $3p^2+\alpha=0$ to a fiber of $\varphi$, then the first argument of this proof shows that 
$F_5=3pq^2$ vanishes on this component, which implies $\alpha=0$, 
so the pair $(S,Q)$ belongs to the family described in Example \ref{ex:order3sec}(iii). 
Now suppose we are in the case of Example \ref{ex:order3sec}(ii), so $f_2=0$ and $f_3=\alpha$. If $\sigma$ sends the component of $\C_Q(5)$ given by $p=0$ to a fiber of $\varphi$, then similarly $F_5=q(3pq+\alpha)$ vanishes on this component, which implies $\alpha=0$, so again the pair $(S,Q)$ belongs to the family described in Example \ref{ex:order3sec}(iii). This
finishes the case $e=0$ and thus the proof.
\end{proof}

\begin{cor}\label{alwayshorizontal}
Let $k$, $S$, and $Q$ be as before. Assume that the pair $(S,Q)$ is not isomorphic to the one of the pairs described in Examples \ref{ex:hell5Q}, \ref{ex:order3}(iii), and \ref{ex:order3sec}(iii). 
If the order of $Q$ in $S_0^\ns(k)$ is $5$, then also assume that the characteristic of $k$ is not equal to $5$. 
Then the rational map $\sigma \colon \C_Q(5) \dashrightarrow S$ sends at least one component of 
$\C_Q(5)$ to a horizontal curve on $S$. 
\end{cor}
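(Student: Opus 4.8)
The plan is to argue by a case analysis on the order $n$ of $Q$ in $S_0^\ns(k)$. Since the standing assumption $y_0 \neq 0$ rules out $Q$ being $2$-torsion and $Q \neq \O$ rules out order $1$, we always have $n \geq 3$, so exactly one of Propositions \ref{vertcomp}, \ref{outofhell5Q}, \ref{nohell4Q}, \ref{nohell3Q} applies (covering $n>5$, $n=5$, $n=4$, $n=3$ respectively). In each case I would argue by contradiction: assume $\sigma$ sends \emph{every} component of $\C_Q(5)$ to a fiber of $\varphi$, and deduce that $(S,Q)$ must be one of the three excluded pairs. Since $\C_Q(5)$ is dense in $\pC_Q(5)$ and the property of a component being horizontal (that is, of $\varphi\circ\sigma$ being nonconstant on it) is detected on the affine part, I may freely pass between the two curves.

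First I would dispose of the three easy ranges. For $n>5$: if some component of $\pC_Q(5)$ is vertical then Proposition \ref{vertcomp} exhibits a (unique second) component that is sent to a horizontal curve, while if no component is vertical then the nonempty curve $\C_Q(5)$ has all its components horizontal; either way a horizontal component exists and no exceptional pair arises. For $n=5$ the corollary's hypothesis guarantees the characteristic is not $5$, so Proposition \ref{outofhell5Q} applies directly: the absence of a horizontal component would force $(S,Q)$ to be isomorphic to the pair of Example \ref{ex:hell5Q}, which is excluded. For $n=4$, Proposition \ref{nohell4Q} unconditionally produces a horizontal component, so there is nothing further to check.

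The case $n=3$ is where the argument really has content, because the corollary excludes only Subexamples \ref{ex:order3}(iii) and \ref{ex:order3sec}(iii), not the full families. Here I would use Proposition \ref{nohell3Q} in two stages. Assuming no component is horizontal, the unique component of $\C_Q(5)$ projecting birationally to $\A^1(p)$ is in particular vertical, so the first part of Proposition \ref{nohell3Q} shows that $(S,Q)$ is isomorphic to a pair in Example \ref{ex:order3} or \ref{ex:order3sec}. Reading off the defining equations of $\C_Q(5)$ in Subexamples (i) and (ii) of these two families, one checks that $\C_Q(5)$ always carries a second, vertical-line component — coming from the factors $p^2-\beta\alpha_1$, $p$, $3p^2+\alpha$, and $p$, respectively — which is distinct from the main component since it projects to a point rather than dominating $\A^1(p)$. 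By our assumption this second component is also vertical, so the ``moreover'' clause of Proposition \ref{nohell3Q} forces $(S,Q)$ into Subexample \ref{ex:order3}(iii) or \ref{ex:order3sec}(iii), contradicting the hypothesis of the corollary and completing the proof.

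The main obstacle will be exactly this order-$3$ analysis: Proposition \ref{nohell3Q} in its primary form controls only the single component dominating the $p$-line, and since Subexamples (i) and (ii) are not among the excluded pairs, closing the gap genuinely requires both the supplementary ``moreover'' statement and the observation that a second (vertical-line) component is always present. The remaining three cases reduce to clean citations of the corresponding propositions.
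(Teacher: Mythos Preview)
Your proposal is correct and follows essentially the same approach as the paper: a case split on the order $n\ge 3$ of $Q$, invoking Propositions \ref{vertcomp}, \ref{outofhell5Q}, \ref{nohell4Q}, and \ref{nohell3Q} for $n>5$, $n=5$, $n=4$, $n=3$ respectively. The paper's own proof is a two-line citation of these four propositions; your write-up simply unpacks the $n=3$ case (noting that in every subexample of Examples \ref{ex:order3} and \ref{ex:order3sec} the curve $\C_Q(5)$ visibly has a second component, so that the ``moreover'' clause of Proposition \ref{nohell3Q} can be applied), which is exactly the content the paper leaves implicit.
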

\begin{proof}
From the assumption $y_0\neq 0$, it follows that the order of $Q$ is at least $3$. 
The statement now follows immediately from Propositions \ref{vertcomp}, \ref{outofhell5Q}, \ref{nohell4Q}, and \ref{nohell3Q}.
\end{proof}

\begin{remark}\label{whatisthiscurve}
In Remark \ref{Tremark}, we have seen that the closure of the image $\sigma(\pC_Q(5))$ in $S$ is contained in the intersection $S \cap T$, where $T$ is the image of $\gamma\colon \Gamma \to \P$. 
Generically, this containment is in fact an equality, but if there are any $(-1)$-curves of $S$ going through $Q$, then these are components of the intersection $S \cap T$ as well. In degenerate cases, the intersection $S \cap T$ may contain even more components (see Remark \ref{finalremarkchapter5}). When studying our del Pezzo surfaces of degree one in families, it is more natural to look at the divisor $S \cap T$ on $S$ than at the closure of the image $\sigma(\pC_Q(5))$.

We will now describe this intersection $S \cap T$ and its arithmetic genus in terms of the Picard group 
of $S$, at least in the 
generic case. Generically, the Picard group $\Pic S$ of $S$ is generated by the divisor class of $-K_S$. 
Also generically, the surface $T$ has degree $12$ and the intersection $S \cap T$ is irreducible and reduced, so 
$\sigma(\pC_Q(5)) = S \cap T$ is linearly equivalent with $-12K_S$. The arithmetic genus of $\sigma(\pC_Q(5))$ is $67$ in this case, and $\sigma(\pC_Q(5))$ has multiplicity $10$ at the point $Q$, multiplicity $2$ at $-5Q$, and is also singular at $20$ more points, which agrees with the fact that the geometric genus of the normalization equals 
$$
67 - \tfrac{1}{2}\cdot 10 \cdot (10-1) - \tfrac{1}{2} \cdot 2 \cdot (2-1) - 20(\tfrac{1}{2} \cdot 2 \cdot (2-1)) = 1.
$$
These $22$ singular points of $\sigma(\pC_Q(5))$ are the intersection points of $S$ with the singular locus of $T$, which is a curve with an embedded point at $Q$. 

Conversely, the family of intersections of $S$ with a hypersurface of degree $12$ has dimension 
$78$, as can be seen from the fact that the space of polynomials in $x,y,z,w$ of weighted degree $12$ modulo the multiples of the defining equation of $S$ has dimension $102 - 23= 79$ or from the fact that the linear system of curves in $\P^2$ of degree $3d$ having multiplicity at least $d$ at each of $8$ given points has dimension $\binom{3d+2}{2} - 1 - 8\cdot \binom{d+1}{2} = \binom{d+1}{2}$, which equals $78$ for $d=12$. 
Hence, the subfamily of those intersections that have multiplicity $10$ at $Q$, multiplicity $2$ at $-5Q$, and $20$ more singularities, has dimension
$$
78 - \tfrac{1}{2} \cdot 10 \cdot(10+1) - \tfrac{1}{2} \cdot 2 \cdot (2+1) - 20 = 0,
$$
so there are only finitely many curves satisfying these conditions. 

We now give yet another description of $\sigma(\pC_Q(5))$ that narrows it down to one of 
only finitely many curves. 
Note that the projection $\nu \colon S \to \P(2,1,1)$ from $S$ to the weighted projective space with coordinates $x,z,w$, gives $S$ the structure of a double cover of a cone that is ramified at the singular point $\O$ (corresponding to the vertex of the cone), as well as over the curve given by $x^3+fx+g=0$, i.e., the locus of nontrivial $2$-torsion points. The involution induced by this double cover is multiplication by $-1$ on the elliptic fibration. 
If we let $-\sigma(\pC_Q(5))\subset S$ denote the image of $\sigma(\pC_Q(5))$ under this involution, then $\sigma(\pC_Q(5))$ and $-\sigma(\pC_Q(5))$ intersect each other in $36$ points on the ramification locus of $\nu$, as well as $108$ points off the ramification locus. The image $\nu\big(\sigma(\pC_Q(5))\big) = \nu\big(\!\!-\!\sigma(\pC_Q(5))\big) \subset \P(2,1,1)$ is a curve of degree $24$ that intersects the branch locus of $\nu$ at $36$ points, being tangent at each, that has multiplicity $10$ at $\nu(Q) = (x_0:0:1)$, multiplicity $2$ at $\nu(-5Q)$, and that is singular at $74$ more points (namely the $20$ images under $\nu$ of the remaining singular points of $\sigma(\pC_Q(5))$, and the $54$ images of the intersection points of $\sigma(\pC_Q(5))$ and $-\sigma(\pC_Q(5))$). These properties narrow down the $168$-dimensional family of curves in $\P(2,1,1)$ of degree $24$ to only finitely many curves.
\end{remark}

\begin{remark}\label{finalremarkchapter5}
Given that for all pairs $(S,Q)$ described in the examples in the previous section there are at least six $(-1)$-curves on $S$ going through $Q$, whenever we want to exclude any of these examples, it suffices to assume that $Q$ does not lie on six $(-1)$-curves. 

We will now explain in terms of the image $T$ of $\gamma\colon \Gamma \to \P$ (cf. Remarks \ref{Tremark} and \ref{whatisthiscurve}) why it is not surprising that the existence of many $(-1)$-curves on $S$ through $Q$ is related to the existence of a component of $\pC_Q(5)$ that maps under $\sigma \colon \pC_Q(5) \dashrightarrow S$ to a fiber of $\varphi \colon S \dashrightarrow \P^1$. 

In Example \ref{ex:hell5Q}, the scheme-theoretic intersection $D' = T\cap S$ consists of the 
ten $(-1)$-curves going through $Q$ and the cuspidal fiber $S_\infty$ with multiplicity $2$. As a divisor on $S$,  
we have that $D'$ is linearly equivalent to $-12K_S$ (cf. Remark \ref{whatisthiscurve}).

In general, the pull-back of $S \cap T$ under the blow-up $\E \to S$ is a divisor $D$ on~$\E$ that consists of
\begin{itemize}
\item[(i)] the components of the strict transform $D_0$ of the image $\sigma(\pC_Q(5))$, 
\item[(ii)] the strict transforms of the $(-1)$-curves on $S$ through $Q$,
\item[(iii)] the fiber $\E_0$, and 
\item[(iv)] the zero section $\O$
\end{itemize}
with certain multiplicities. The degree of the restriction of $\pi \colon \E \to \P^1$ to $D$ equals the intersection number of $D$ with any fiber of $\pi$. Generically, this degree equals $\deg T=12$, the multiplicities of $\E_0$ and $\O$ are $0$, and the multiplicities of the components in (i) and (ii) are $1$; the $(-1)$-curves on $S$ intersect fibers with multiplicity $1$, so the restriction of $\pi$ to $D_0$ has degree $12-s$, where $s$ is the number of $(-1)$-curves on $S$ through $Q$. Note that Example \ref{ex:hell5Q} is not generic in the sense that the multiplicity of the zero section $\O$ is $2$, thus reducing the degree of the restriction of $\pi$ to $D_0$ to $0$.

In general, the multiplicities of the components in (iii) and (iv) seem to depend only on the order of $Q$ in $\E_0^\ns(k)$ and the singularity type of $\E_0$, but in any case we find that the more $(-1)$-curves there are on $S$ that go through $Q$, the smaller the degree of the restriction of $\pi$ to $D_0$, forcing all components of $D_0$ to be vertical in extreme cases. 

In fact, a thorough investigation of the degree of $T$ (which may itself be nonreduced) as well as all multiplicities might yield another proof of Corollary \ref{alwayshorizontal} under the assumption that $Q$ not lie on six $(-1)$-curves of $S$, but it is not clear that this will require less computational effort than the given proof, especially given that even in the generic case, the intersection $S \cap T$ does not appear to admit a very elegant description (cf.~Remark \ref{whatisthiscurve}).
\end{remark}

\section{Torsion in a base change}\label{S:torsionbachange}

In this section, $k$ is still a field of characteristic not equal to $2$ or $3$.

\begin{lemma}\label{changefibers}
Let $B$ be a smooth curve over $k$ and $\pi \colon \E \to B$ a minimal nonsingular elliptic fibration. 
Let $C$ be a smooth curve over $k$ and $\tau \colon C \to B$ a nonconstant morphism. 
Let $\pi'\colon \E' \to C$ be the minimal nonsingular model of the base change $\E \times_B C \to C$ of $\pi$ by $\tau$. 
Let $c \in C(\kbar)$ be a point and set $b = \tau(c)$.
Let $\E_b$ and $\E_c'$ be the fibers of $\pi$ and $\pi'$ over $b$ and $c$, respectively. 
Let $e=e_c(\tau)$ be the ramification index of $\tau$ at $c$. Then the following statements hold. 
\begin{enumerate}
\item If $\E_b$ has type $I_d$ for some integer $d$, then $\E'_c$ has type $I_{de}$.
\item If $\E_b$ has type $I_d^*$ for some integer $d$, then $\E'_c$ has type $I_{de}$ for even $e$ and type $I_{de}^*$ for odd~$e$.
\item If $\E_b$ has type $IV^*$, then $\E'_c$ has type $I_0, IV^*, IV$ for $e \equiv 0,1,2 \pmod 3$, respectively.
\item If $\E_b$ has type $II$, then $\E'_c$ has type $I_0, II, IV, I_0^*, IV^*, II^*$ for $e \equiv 0,1,2,3,4,5 \pmod 6$, respectively.
\item If $\E_b$ has type $III$, then $\E'_c$ has type $I_0, III, I_0^*, III^*$ for $e \equiv 0,1,2,3 \pmod 4$, respectively.
\end{enumerate} 
\end{lemma}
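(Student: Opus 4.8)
The plan is to reduce the assertion to a purely local computation with minimal Weierstrass models and then to track how the valuations of $c_4$, $c_6$, and the discriminant $\Delta$ transform under pullback followed by reminimalization. The Kodaira type of a fiber depends only on the completed local ring at the point, so I would first localize at $c$ and at $b=\tau(c)$, replacing $B$ and $C$ by $\Spec \O_{B,b}$ and $\Spec \O_{C,c}$. Because the characteristic of $k$ is not $2$ or $3$, near $b$ the minimal fibration $\pi$ is given by a minimal short Weierstrass equation $y^2=x^3+Ax+B$ over $\O_{B,b}$, with $c_4=-48A$, $c_6=-864B$, and $\Delta=-16(4A^3+27B^2)$, and the type of $\E_b$ is read off from the triple $\big(v_b(c_4),v_b(c_6),v_b(\Delta)\big)$ via the classical Kodaira--N\'eron table (equivalently Tate's algorithm specialized to residue characteristic $\neq 2,3$). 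The same table over $\O_{C,c}$ computes the type of $\E'_c$, since the relatively minimal nonsingular model $\E'$ carries at $c$ the fiber determined by the minimal Weierstrass model over $\O_{C,c}$.

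The engine of the proof is the interaction of pullback with minimalization. Writing $e=e_c(\tau)$, the normalized valuations satisfy $v_c(\tau^*h)=e\,v_b(h)$, so the pullback of the minimal equation for $\E_b$ is a (generally non-minimal) Weierstrass model for $\E\times_B C$ at $c$ whose invariants are the original triple scaled by $e$. In residue characteristic $\neq 2,3$ a short Weierstrass model fails to be minimal precisely when $v(A)\geq 4$ and $v(B)\geq 6$, and each minimalizing change of coordinates then lowers $\big(v(c_4),v(c_6),v(\Delta)\big)$ by $(4,6,12)$; I would iterate this until the model is minimal and read off the resulting type. Since every step changes $v(\Delta)$ by a multiple of $12$, the minimal discriminant valuation of $\E'_c$ is congruent to $e\,v_b(\Delta)$ modulo $12$. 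For the potentially good additive inputs $II$, $III$, $IV^*$, with $v_b(\Delta)=2,3,8$, this already forces the period $12/\gcd(v_b(\Delta),12)=6,4,3$ in $e$, reflecting the orders $6,4,3$ of the local monodromy that a degree-$e$ base change raises to its $e$-th power; the multiplicative fiber $I_d$ is semistable and stays $I_{de}$, whereas $I_d^*$, being the quadratic twist of a multiplicative fiber, has an output that depends only on the parity of $e$, an even base change untwisting it to $I_{de}$.

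Carrying out the arithmetic case by case is then routine. For $I_d$ the scaled triple $(0,0,de)$ is already minimal, giving $I_{de}$ and proving (1). For $I_d^*$ with $d\geq 1$ the sharp values $v_b(c_4)=2$, $v_b(c_6)=3$ yield $(2e,3e,(6+d)e)$, and reminimalizing $\lfloor e/2\rfloor$ times lands on $(0,0,de)$ for even $e$ and on $(2,3,6+de)$ for odd $e$, i.e.\ on $I_{de}$ and $I_{de}^*$. For $II$, $III$, and $IV^*$ the sharp coordinate is $v_b(c_6)=1$, $v_b(c_4)=1$, and $v_b(c_6)=4$ respectively, so after scaling the binding constraint in the minimalization is $v(c_6)=e$, $v(c_4)=e$, and $v(c_6)=4e$; writing $e$ modulo $6$, $4$, and $3$ and reading the residual triple off the table produces exactly the lists $I_0,II,IV,I_0^*,IV^*,II^*$ and $I_0,III,I_0^*,III^*$ and $I_0,IV^*,IV$ of parts (4), (5), and (3).

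The main obstacle is making the reading-off step rigorous in the cases where the minimal short Weierstrass model is not pinned down by a single sharp coordinate valuation, the cleanest instance being type $I_0^*$ (the case $d=0$ of part (2)), where only $v_b(\Delta)=6$ is sharp and $v_b(c_4),v_b(c_6)$ are merely bounded below. Here I would replace the binding-coordinate argument by the congruence above, which gives minimal discriminant $0$ for even $e$ and $6$ for odd $e$; since the reduction is potentially good with monodromy of order $2$ and hence becomes good exactly after an even base change, the output must be $I_0$ or $I_0^*$, as claimed. The one genuine subtlety to record is the case where the residue characteristic divides $e$: this is harmless because the monodromy orders $2,3,4,6$ at play are all units in $k$ (the characteristic being $\neq 2,3$), so the additive fibers have potentially good, tamely ramified reduction and the valuation-theoretic minimalization is unaffected by wild ramification.
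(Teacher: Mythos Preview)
Your proposal is correct and follows essentially the same approach as the paper: the paper's proof consists of a single sentence saying the lemma follows directly from Tate's algorithm (citing \cite{tatealg}, \cite[IV.9.4]{silvermanadvanced}, and \cite[Table VI.4.1]{Miranda}), and your argument is precisely a careful unpacking of that algorithm in residue characteristic $\neq 2,3$ via the valuations of $c_4$, $c_6$, and $\Delta$. Your treatment is more detailed than the paper's, including the verification that the relevant valuations are sharp and the remark on $I_0^*$, but the underlying method is identical.
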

\begin{proof}
This follows directly from Tate's algorithm (see \cite{tatealg} and \cite[IV.9.4]{silvermanadvanced}).
See also \cite[Table VI.4.1]{Miranda}, which is stated for characteristic zero. 
\end{proof}

\begin{lemma}\label{infoMN}
Suppose $k$ is algebraically closed.
Let $S$ be a del Pezzo surface of degree $1$ over $k$ and $\pi\colon \E \to \P^1$ the associated 
elliptic fibration. Let $M$ and $N$ denote the number of singular fibers of $\pi$ of type $I_1$ and type $II$, 
respectively. Then we have $M+2N=12$. If $\pi$ is not isotrivial, then we have $M \geq 4$.
\end{lemma}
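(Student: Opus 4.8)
The plan is to treat the two assertions separately: the equality $M+2N=12$ is an Euler-number count, while the inequality follows from a short argument that too many cuspidal fibers would force $f$ to vanish identically, making $\pi$ isotrivial. For the equality I would use that $\E$ is a rational elliptic surface: it is the blow-up at the single point $\O$ of the degree-$1$ del Pezzo surface $S$, which is itself $\P^2$ blown up at eight points, so the topological Euler characteristic is $e(\E)=e(S)+1=11+1=12$. Since all fibers of $\pi$ are irreducible, $\pi$ is relatively minimal and every singular fiber has type $I_1$ or $II$; smooth fibers contribute $0$ to the Euler characteristic, each $I_1$ fiber contributes $1$, and each $II$ fiber contributes $2$, so additivity of $e$ gives $12=e(\E)=M\cdot 1+N\cdot 2$. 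The same identity also drops out of the observation that $4f^3+27g^2$, the discriminant of the Weierstrass family up to a nonzero constant, is a nonzero binary form of degree $12$ (nonzero since $S$ is smooth) vanishing to order $1$ at each $I_1$ fiber and to order $2$ at each $II$ fiber.

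For the inequality, recall that the $j$-invariant of the fibration is $j=1728\cdot 4f^3/(4f^3+27g^2)$, a rational function on $\P^1$, and that $\pi$ is isotrivial precisely when $j$ is constant. The decisive observation is that $f=0$ forces $j\equiv 0$ and hence isotriviality; contrapositively, a non-isotrivial $\pi$ has $f\neq 0$. I would then locate the cuspidal fibers: the fiber over $t$ is the Weierstrass cubic $y^2=x^3+f(t)x+g(t)$, which acquires a cusp (type $II$) exactly when this cubic has a triple root, that is, when $f(t)=g(t)=0$. In particular every type $II$ fiber lies over a zero of $f$.

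Combining these, suppose $\pi$ is not isotrivial, so $f\neq 0$, and suppose for contradiction that $M\le 3$. Since $M=12-2N$ is even, this means $N\ge 5$, so the at least five distinct points carrying type $II$ fibers would be five or more distinct zeros of the binary form $f$ of degree $4$, forcing $f=0$ and contradicting $f\neq 0$. Hence $M\ge 4$. I expect the only delicate point to be the bookkeeping behind $M+2N=12$, namely confirming that the Weierstrass model is minimal at every fiber so that the local discriminant orders are exactly $1$ and $2$ (equivalently, that $\pi$ is relatively minimal, so the Euler contributions are exactly $e(I_1)=1$ and $e(II)=2$); both are guaranteed by the irreducibility of the fibers established earlier. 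Once the cuspidal fibers are pinned to the zeros of $f$, the inequality itself is immediate.
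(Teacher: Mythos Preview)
Your proof is correct and follows essentially the same route as the paper's. The paper argues $M+2N=12$ via the discriminant $\Delta=4f^3+27g^2$ having degree $12$ and vanishing to orders $1$ and $2$ at $I_1$ and $II$ fibers (your second derivation), and then deduces that each type $II$ fiber is a zero of $f$ (the paper phrases this via the vanishing of $j$ rather than your direct triple-root observation), concluding as you do that $N\ge 5$ forces $f=0$ and hence isotriviality.
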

\begin{proof}
Let $f,g \in k[z,w]$ be such that $S$ is given by 
\eqref{maineq}.  The surface $\E$ is a minimal nonsingular elliptic surface with fibers 
that are either nodal (type $I_1$) or cuspidal (type $II$). 
The discriminant $\Delta = 4f^3+27g^2$ vanishes at points of $\P^1$ corresponding to nodal and cuspidal fibers to order $1$ and $2$, respectively, so we get $M+2N=\deg \Delta = 12$. For any $t \in \P^1(\kbar)$ for which $\pi^{-1}(t)$ has type II, both $\Delta$ and the $j$-invariant $j = 2^83^3f^3/\Delta$ vanish at $t$ 
(see \cite[Tate's Algorithm, IV.9.4]{silvermanadvanced}), which implies that $f$ vanishes at $t$. It follows that 
$f$ vanishes at at least $N$ points, so if 
$M<4$, i.e, $N\geq 5$, then $f=0$, so $\pi$ is isotrivial. 
\end{proof}

\begin{remark}
In characteristic zero, the identity $M+2N=12$ follows from the more general fact that the Euler number of $\E$, which is $12$, equals the sum of the local Euler numbers, which are $1$ and $2$ for fibers of type $I_1$ and $II$, respectively (see \cite[Table IV.3.1]{Miranda} and \cite[Lemma IV.3.3]{Miranda}). 
The inequality $M< 4$ implies $N\geq 5$, which implies $N=6$ by \cite[Lemma 1.2]{persson}, which in turn implies that $\pi$ is isotrivial. 
\end{remark}

For $n\geq 3$, let $E(n) \to Y_1(n)$ be the universal elliptic curve over the usual modular curve $Y_1(n)$ over $\Z[1/n]$ with a section $P$ that has order $n$ in every fiber.
Then every elliptic curve $E$ over a scheme $S$ over $\Z[1/n]$ --with nowhere vanishing $j$-invariant if $n=3$-- with a section that has order $n$ 
in every fiber, is the base change of $E(n)/Y_1(n)$ by a unique morphism $S \to Y_1(n)$. 
Let $X_1(n)$ be the usual projective 
closure of $Y_1(n)$,  and let $\upsilon(n)\colon \mathbb{E}(n)\to X_1(n)$ 
be the minimal nonsingular elliptic fibration over $X_1(n)$ associated to $E(n)/Y_1(n)$. 
From Ogg's description of the cusps of $X_1(n)$ in \cite{oggtwo}, we conclude that for each $n\geq 5$ and each divisor $d$ of $n$, the number of fibers of $\upsilon(n)$ of type $I_d$ is $\frac{1}{2}\varphi(d)\varphi(n/d)$ (see also  \cite[p.\ 219]{Kub}, or \cite[Table 3]{Kub} for explicit models for small~$n$, which also show the types of the singular fibers). Table \ref{singfibsX1} 
gives the genus $g(X_1(n))$ of $X_1(n)$ (see \cite[p.\ 109]{oggone}) and 
describes the singular fibers of $\upsilon(n)$ for several $n$ (see 
\cite[Proposition 4.2]{shiodamodular}).
\begin{center}
\begin{table}[h]
\begin{tabular}{|r|c|l|}
\hline
$n$ & $g(X_1(n))$ & sing. fibers of $\upsilon(n)$ \\
\hline
3 & 0 & $IV^* + I_3 + I_1$  \\
5 & 0 & $2I_5+2I_1$ \\
7 & 0 &  $3I_7 + 3I_1$ \\
11 & 1 &  $5I_{11} + 5I_1$ \\
\hline
\end{tabular}
\caption{Singular fibers of $\upsilon(n)$}
\label{singfibsX1} 
\end{table}
\end{center}
To parametrize elliptic curves over a field of characteristic $p$ with a point of order $p$, we use Igusa curves instead of the modular curves above. For an extensive treatise of the subject, we refer the reader to \cite{igusa} and \cite[Chapter 12]{katzmazur}. For any prime $p\geq 3$, the smooth affine Igusa curve $\Ig(p)^{\ord}$ over $\F_p$ parametrizes ordinary elliptic curves $E$ with a point that generates the kernel of the Verschiebung map in the following sense (see \cite[Section 12.3 and Corollary 12.6.3]{katzmazur}). For every scheme $S$ over $\F_p$,
the absolute Frobenius $S \to S$ is the map that corresponds on affine rings to the map $x \to x^p$. 
For every elliptic curve $E \to S$, we let $E^{(p)}\to S$ denote the base change of $E \to S$ by the absolute Frobenius $S \to S$. 
By the universal property of the fibered product, the absolute Frobenius $E\to E$ factors as the composition of the projection $E^{(p)} \to E$ and a map $F=F_{E/S} \colon E \to E^{(p)}$ that we call the {\em relative Frobenius}. The dual isogeny $V=V_{E/S} \colon E^{(p)} \to E$ of $F_{E/S}$ is called the {\em Verschiebung}. There 
exists an elliptic curve $\mathfrak{E}(p)^\circ$ over the Igusa curve $\Ig(p)^{\ord}$, as well a section $\mathfrak{P}$ of the associated elliptic curve  ${\mathfrak{E}(p)^\circ}^{(p)} \to \Ig(p)^{\ord}$, such that all fibers of both fibrations are ordinary and $\mathfrak{P}$ generates the kernel of the Verschiebung $V \colon {\mathfrak{E}(p)^\circ}^{(p)} \to \mathfrak{E}(p)^\circ$, and such that for every elliptic curve $E$ over a scheme $S$ over $\F_p$ of which all fibers are ordinary, with a section $P$ of the associated curve $E^{(p)}\to S$ that generates the kernel of the Verschiebung $V \colon E^{(p)} \to E$, there is a unique morphism $\alpha \colon S \to \Ig(p)^{\ord}$ such that $E, E^{(p)}$, and $P$ are the base change of 
$\mathfrak{E}^\circ$, ${\mathfrak{E}(p)^\circ}^{(p)}$, and $\mathfrak{P}$, respectively, by $\alpha$.

If $k$ is a field of characteristic $p$ and $E'$ is an elliptic curve over $S=\Spec k$ with a point $P$ of order $p$, then there is an elliptic curve $E\to S$ such that $E^{(p)} \to S$ is isomorphic to $E'\to S$ and $P$ generates the kernel of Verschiebung; hence $E'\to S$ is a base change of the universal curve ${\mathfrak{E}(p)^\circ}^{(p)} \to \Ig(p)^{\ord}$. 

Let $\overline{\Ig(p)^{\ord}}$ denote the nonsingular projective completion of $\Ig(p)^{\ord}$, and let $\omega(p) \colon \mathfrak{E}(p)^{(p)}\to \overline{\Ig(p)^{\ord}}$ denote the minimal nonsingular projective model of ${\mathfrak{E}(p)^\circ}^{(p)} \to \Ig(p)^{\ord}$.

Table \ref{singfibsIg} gives the genus $g\big(\overline{\Ig(p)^{\ord}}\big)$ of 
$\overline{\Ig(p)^{\ord}}$ (see \cite[p.\ 96 and 99]{igusa}) and the fiber types of the singular fibers of $\omega(p)$ for several primes $p$. The fibers at the $(p-1)/2$ cusps have type $I_p$ \cite[Theorem 10.3]{liedtkeone} and the type of the fibers above the supersingular points can be deduced from \cite[Theorem 10.1]{liedtkeone}; for $p=13$ it suffices to note that the only supersingular $j$-value modulo $13$ is $5$, while for $p \in \{5,7,11\}$, the fibers are also given in \cite[Proposition 1.3]{liedtketwo}. This will be used in the proof of Theorem \ref{ellfibnotors}.

\begin{center}
\begin{table}[h]
\begin{tabular}{|r|c|l|}
\hline
$p$ & $g\big(\overline{\Ig(p)^{\ord}}\big)$ & sing. fibers of $\omega(p)$ \\
\hline
5 & 0 & $2I_5+II$ \\
7 & 0 &  $3I_7 + III$ \\
11 & 0 &  $5I_{11} + II+III$ \\
13 & 1 &  $6I_{13}+I_0^*$ \\
\hline
\end{tabular} 
\caption{Singular fibers of $\omega(p)$}
\label{singfibsIg} 
\end{table}
\end{center}
\begin{theorem}\label{ellfibnotors} 
Let $S$ be a del Pezzo surface of degree $1$ over $k$ and $\pi\colon \E \to \P^1$ the associated 
elliptic fibration. Let $C$ be a smooth, connected curve over $k$ of genus at most $1$, 
and $\tau \colon C \to \P^1$ a nonconstant morphism. Then the base change $\E \times_{\P^1} C \to C$ 
of $\pi$ by $\tau$ has no nonzero section of finite order. 
\end{theorem}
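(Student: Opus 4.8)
The plan is to argue by contradiction, reducing to the most rigid possible situation and then playing two descriptions of the base-changed surface against each other. First I would extend scalars to $\kbar$: a nonzero torsion section persists on some geometric component $C'$ of $C$, which is a smooth connected curve of genus at most $1$, and the restriction of $\tau$ to $C'$ is nonconstant (otherwise the base-changed family over $C'$ would be constant, and its torsion section would descend to a torsion section of $\E$, which is impossible since $\pi$ has only irreducible fibres and hence torsion-free Mordell--Weil by Shioda--Tate). Replacing the section by a suitable multiple I may assume it has prime order $\ell$, and passing to the minimal nonsingular model $\pi'\colon\E'\to C$ of the base change, this extends to a torsion section of order $\ell$. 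The decisive structural observation I would record at once is that, because $\pi$ has only fibres of type $I_1$ and $II$, every singular fibre of $\E'$ lies above a singular fibre of $\pi$ and so has $j$-invariant in $\{0,\infty\}$: by Lemma \ref{changefibers} a fibre above an $I_1$-fibre is multiplicative ($j=\infty$), while one above a $II$-fibre (which has $j=0$) is of type $I_0,II,IV,I_0^*,IV^*$ or $II^*$. In particular $\E'$ has no fibre of type $III$, $III^*$, or $I_n^*$ with $n\ge 1$.

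Next I would invoke the universal property of the modular, resp. Igusa, curves: a section of order $\ell$ makes $\E'$, over the good-reduction locus, the pullback of the universal curve over $Y_1(\ell)$ (or over $\Ig(\ell)^{\ord}$ when $\ell$ is the characteristic), so by uniqueness of minimal models $\E'$ is the minimal nonsingular model of the pullback of $\upsilon(\ell)$ (resp. $\omega(\ell)$) along a morphism $h\colon C\to X_1(\ell)$ (resp. $\overline{\Ig(\ell)^{\ord}}$). I would first check $h$ is nonconstant: $\tau$ is surjective and meets the nonempty discriminant of $\pi$, so $\E'$ has singular fibres, whereas a constant $h$ would land in the good-reduction locus and force $\E'$ smooth over $C$. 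When the target has genus at least $2$, a nonconstant map from the genus-$\le 1$ curve $C$ is impossible by Riemann--Hurwitz; by the genus values recorded before Tables \ref{singfibsX1} and \ref{singfibsIg} this disposes of every $\ell\ge 13$ coprime to the characteristic and every characteristic $p\ge 17$.

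There remains the finite list of small $\ell$ for which $X_1(\ell)$ or $\overline{\Ig(\ell)^{\ord}}$ has genus $0$ or $1$, and this is the substance of the proof. For a genus-$1$ target, Riemann--Hurwitz forces $h$ to be \emph{étale}, so by Lemma \ref{changefibers} the singular-fibre types of the universal surface are preserved under pullback; combined with the observation that every singular fibre of $\E'$ has $j\in\{0,\infty\}$, this immediately rules out the characteristic $p=13$ case, whose universal surface $\omega(13)$ has an $I_0^*$ fibre over the supersingular point $j=5$. For the genus-$0$ targets I would compare the fibres of $\E'$ fibrewise through Lemma \ref{changefibers}, reading off the types of $\upsilon(\ell)$, $\omega(\ell)$ from the Tables: whenever the universal surface carries an additive fibre with $j\neq 0$ (the type-$III$ fibres of $\omega(7)$ and $\omega(11)$), the $j\in\{0,\infty\}$ restriction forces its pullback to be smooth, hence $h$ to ramify to order divisible by $4$ there; in the purely multiplicative or $j=0$ cases one instead equates the two expressions for the Euler number $e(\E')=12\,\chi(\O_{\E'})$, one computed by base change of $\pi$ (a short count with Lemma \ref{changefibers} gives $12\deg\tau-12K$, where $K$ sums $\lfloor e_c(\tau)/6\rfloor$ over points above type-$II$ fibres) and one through $h$ and the Tables, which together with $M+2N=12$ pins down $\deg h$ and forces $6\mid e_c(\tau)$ above every type-$II$ fibre. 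These forced ramifications are then weighed against the Riemann--Hurwitz bounds for $\tau$ and $h$ under $g(C)\le 1$.

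The main obstacle is exactly this last weighing: producing, for each small $\ell$, enough forced ramification to overrun the Riemann--Hurwitz budget of a genus-$\le 1$ base. The cleanest framing, which I would ultimately adopt, is that the pair $(\tau,h)$ factors $C$ through the normalisation of the fibre product $\P^1\times_{\P^1_j}X_1(\ell)$ parametrising $\ell$-torsion of the generic fibre, so that the statement becomes: every component of this curve dominating $\P^1$ has genus at least $2$. The leverage making this true is that the $j$-map of a degree-one del Pezzo fibration is forced to ramify to order divisible by $3$ over $j=0$, by $2$ over $j=1728$, and simply over $j=\infty$, matching the modular ramification, so that the fibre product has large genus unless $\deg j_\pi=M$ is small; the bound $M\ge 4$ of Lemma \ref{infoMN} is precisely what excludes the low-genus degenerations, while the isotrivial case ($M=0$, hence $N$ large) is killed directly, since the resulting divisibility $6\mid e_c(\tau)$ at each of the many type-$II$ fibres already violates the Riemann--Hurwitz bound for $\tau$.
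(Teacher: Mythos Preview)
Your overall strategy—pass to prime order $\ell$, classify via $X_1(\ell)$ or $\overline{\Ig(\ell)^{\ord}}$, and compare fibre types through Lemma~\ref{changefibers} against a Riemann--Hurwitz bound—is essentially the paper's case III, and the $j\in\{0,\infty\}$ observation is a pleasant way to organise part of the combinatorics. But two cases that the paper treats separately are missing or only gestured at, and both are genuine gaps.

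First, $\ell=2$. The moduli problem $[\Gamma_1(2)]$ is not representable: every pair $(E,P)$ with $2P=\O$ has the nontrivial automorphism $[-1]$ fixing $P$, so there is no universal curve over $Y_1(2)$ in the sense you invoke, and your fibre-by-fibre comparison via $h$ has nothing to stand on. The paper handles $n=2$ directly by computing the genus of the $2$-torsion locus $L\subset S$ (your ``fibre product'' reformulation, made explicit): $L\to\P^1$ has degree $3$, ramifies simply over each $I_1$ point and totally over each $II$ point, so Riemann--Hurwitz with $M+2N=12$ gives $g(L)=4$, and any curve carrying a $2$-torsion section maps nonconstantly to $L$.

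Second, the isotrivial case. If the $j$-invariant of $\pi$ is constant then the composite $C\xrightarrow{h}X_1(\ell)\to X(1)$ is constant, so $h$ is constant; your argument that ``a constant $h$ forces $\E'$ smooth'' then needs the moduli to be fine at the image point. For $\ell=3$ the point $j=0$ (which is exactly where isotrivial surfaces land, since $f=0$) has extra automorphisms, so this fails, and your deduction of $6\mid e_c(\tau)$ above every type-$II$ fibre is not justified. The paper instead argues directly: smoothness forces $f=0$ and $g(t,1)$ separable of degree $5$ or $6$; for a point $(x_1,y_1)$ of odd order on the generic fibre one has $y_1\ne0$, and comparing with the constant curve $y^2=x^3+1$ shows that $g(t,1)$ is a square in $\kappa(C)$, so $\kappa(C)$ contains a genus-$2$ subfield.

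For the remaining cases (your case III) your sketch is in the right spirit but less efficient than the paper's: rather than Euler-number bookkeeping, the paper bounds $\deg R_\tau$ in one line by observing that the fraction $\sum_j(j-1)r_j\big/\sum_j jr_j$ read off from Tables~\ref{singfibsX1} and~\ref{singfibsIg} is at least $\tfrac12$ (resp.\ $\tfrac45$), giving $\tfrac12Md$ (resp.\ $\tfrac45Md$) of ramification over the $I_1$-fibres; over the type-$II$ fibres one gets another $\tfrac12Nd$ in characteristic $\ne n$ because $\upsilon(n)$ has no fibres of type $II$, $II^*$, or $I_0^*$, which via Lemma~\ref{changefibers} forces $e_c(\tau)$ even there. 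With $M+2N=12$ (and $M\ge4$ in the Igusa case by Lemma~\ref{infoMN}) this already gives $\deg R_\tau\ge 3d$ and hence $g(C)\ge 2$.
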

\begin{proof}   
Without loss of generality, we assume that $C$ is projective and that $k$ is algebraically closed.
As the curve $C$ is smooth and connected, it is integral, so it has a unique generic point $\eta$ that is dense in $C$. 
The curve $\E \times_{\P^1} \eta$ is an elliptic curve over the function field $\kappa(C)$ of $C$, which is an extension of the function field $k(t)$ of $\P^1$ with $t=z/w$. Let $j \in k(t)$ be the $j$-invariant of the generic fiber 
of $\pi$. Assume that the elliptic fibration $\E \times_{\P^1} C \to C$ has a nonzero section of finite order, say order $n>1$. 
Then the curve $\E \times_{\P^1} \eta$ has a point of order $n$ over $\kappa(C)$.
Without loss of generality, we assume that $n$ is prime.
Let $f,g \in k[z,w]$ be homogeneous polynomials such that $S$ is isomorphic to the surface in $\P$ given by \eqref{maineq}.
Let $M$ and $N$ denote the number of fibers of $\pi$ of type $I_1$ (nodal) and $II$ (cuspidal), respectively. 
Then $M+2N =12$ by Lemma \ref{infoMN}.
We will show that the genus of $C$ is at least $2$ by considering several cases, thus deriving the contradiction that proves the statement.

I) We first consider the case $n=2$. 
Note that $\pi$ itself has no section of order $2$, 
for if it did, it would be given by $y=0$ and $x=h(z,w)$ for some homogeneous polynomial $h \in k[z,w]$ of degree $2$, and then $S$ would be singular at the point on this section in the fibers given by $3h^2+f=0$. 
Since the locus $L\subset S$ of the $2$-torsion points has degree $3$ over $\P^1(z,w)$, it follows that $L$ is irreducible. To compute its genus, note that the map $\lambda \colon L \to \P^1$ ramifies whenever $D = 4f^3+27g^2$ vanishes. Moreover, if $D$ vanishes to order $1$ at $t \in \P^1$, which happens if and only if the fiber $\E_t$ of $\pi$ has type $I_1$, then 
there are two points on $L$ above $t$ with ramification indices $1$ and $2$, while if $D$ vanishes to order $2$, which happens if and only if the fiber $\E_t$ has type $II$, then there is a unique point on $L$ above $t$ with ramification index $3$. It follows that the degree of the ramification divisor of $\lambda$ equals $M+2N=12$, so the Riemann-Hurwitz formula applied to $\lambda$ shows that the genus of $L$ equals $1+ \frac{1}{2} (-2(\deg \lambda) + 12) = 4$. If the base change of $\pi$ by $\tau$ has a section of order $2$, then this section would map nontrivially to $L$, so we get $g(C) \geq 4$. 

II) We now consider the case that $j$ is constant, that is $j \in k$, and may assume $n\neq 2$. 
Then there are $a,b \in k$ and $h \in \overline{k(t)}$, where $\overline{k(t)}$ denotes an algebraic closure of $k(t)$, such that $f(t,1) = ah^2$ and 
$g(t,1)=bh^3$. If $f,g \neq 0$, then $h  = ab^{-1}g(t,1)f(t,1)^{-1}$ is contained in $k(t)$ and one checks that $S$ is not smooth. If $g=0$, then $S$ is not smooth either, so we find $f=0$ and again from smoothness of $S$, we find that $g(t,1)$ is separable and has degree $5$ or $6$ in $t$ (cf.~\cite[Proposition 3.1]{varilly}). 
Suppose $P = (x_1,y_1) \in \E \times_{\P^1} \eta$ is a point over $\kappa(C)$ of order $n$. Let $\overline{\kappa(C)}$ be an algebraic closure of $\kappa(C)$ and
$\beta \in \overline{\kappa(C)}$ an element satisfying $\beta^6 = g(t,1)$. Then $(x_1\beta^{-2},y_1\beta^{-3})$ is a point of order $n$ on the curve given by $y^2 = x^3+1$, so there are $x_2,y_2 \in k$ such that $x_1 = x_2\beta^2$ and $y_1 = y_2\beta^3$. From $n \neq 2$, we get $y_1 \neq 0$. If $x_1 \neq 0$, then $\beta = x_2y_2^{-1}y_1x_1^{-1}$ is contained in $\kappa(C)$; 
if $x_1=0$, then $y_1^2 = g(t,1)$, so in any case, $g(t,1)$ is a square in $\kappa(C)$, which implies that $\kappa(C)$ contains a subfield of genus $2$, so $C$ has at least genus $2$ itself. 

III) The case $n \geq 3$ and $j\not \in k$. If the characteristic of $k$ is not equal to $n$, then we set $Y = Y_1(n)$ and $X = X_1(n)$ and  $\mathbb{E} = \mathbb{E}(n)$ and $\upsilon = \upsilon(n)$; otherwise, we set $Y = \Ig(n)^\ord$ and $X = \overline{\Ig(n)^{\ord}}$ and $\mathbb{E} =\mathfrak{E}(n)^{(n)}$ and $\upsilon = \omega(n)$. 
In either case, there is a morphism $\eta \to Y \subset X$ such that the elliptic curve $\E \times_{\P^1} \eta$
over $\eta$ is the base change of $\mathbb{E}$ over $X$. This 
morphism extends to a morphism $\chi \colon C \to X$, which is nonconstant because $j$ is not constant.
The elliptic surfaces $\E \times_{\P^1} C$ and $\mathbb{E} \times_X C$ have isomorphic generic fibers $\E \times_{\P^1} \eta \isom \mathbb{E} \times_{X} \eta$, so their minimal nonsingular models are isomorphic as well by 
\cite[Proposition II.1.2 and Corollary II.1.3]{Miranda}. 
Let $\pi'\colon \E' \to C$ be this minimal nonsingular elliptic fibration.
$$
\xymatrix{
\E \ar[d]_{\pi} & \E \times_{\P^1} C \ar[l] \ar@{<-->}[r] \ar[d]& \E' \ar@{<-->}[r] \ar[dr]_{\pi'} \ar[dl]^{\pi'}
      & \mathbb{E} \times_{X} C \ar[r]\ar[d] & \mathbb{E} \ar[d]^{\upsilon}\\
\P^1 & C \ar[l]^{\tau} \ar@{=}[rr] & & C \ar[r]_{\chi} & X
}
$$
Set $d = \deg \tau$ and let $R \in \Div C$ denote the ramification divisor of $\tau$. Then the degree of $R$ is at least
\begin{equation}\label{degR}
\sum_{c \in C} (e_{c}(\tau)-1) \geq  
\sum_{\substack{b \in\P^1\\ \E_b \mbox{\tiny\ type } I_1}}\left(\sum_{\substack{c\in C\\ \tau(c)=b}}(e_{c}(\tau)-1)\right)+
\sum_{\substack{b \in\P^1\\ \E_b \mbox{\tiny\ type } II}}\left( \sum_{\substack{c\in C\\ \tau(c)=b}}(e_{c}(\tau)-1)\right), 
\end{equation}
where $e_{c}(\tau)$ denotes the ramification index of $\tau$ at $c$. 

Lemma \ref{changefibers} relates the types of the singular fibers of $\pi'$ to those of $\pi$ and the ramification of $\tau$ on one hand, and to those of $\upsilon$ and the ramification of $\chi$ on the other hand. The remainder of the proof consists of a largely combinatorial argument to give a lower bound for the degree of $R$, which then, by the Riemann-Hurwitz formula, yields a lower bound for the genus of $C$. 

Lemma \ref{changefibers} implies that the points $c \in C$ for which the fiber $\E_{\tau(c)}$ of $\pi$ above $\tau(c)$ has type $I_1$ are exactly the points for which the fiber $\E'_c$ of $\pi'$ above $c$ has type $I_m$ for some integer $m\geq 1$, and exactly the points for which the fiber $\mathbb{E}_{\chi(c)}$ of $\upsilon$ above $\chi(c)$ has type $I_j$ or $I_j^*$ for some integer $j\geq 1$; for such points $c$, and integers $m$ and $j$, the quotient ${\ell}=m/j$ is a positive integer and we have $e_c(\tau) = j{\ell}$ and $e_c(\chi) = {\ell}$. 
For each $j \geq 1$, let $r_j$ denote the number of fibers of $\upsilon$ of type $I_j$ or $I_j^*$; for each ${\ell}\geq 1$, let $s_{j,{\ell}}$ denote the number of fibers of $\pi'$ of type $I_{j{\ell}}$ that lie above a point $c \in C$ for which the fiber of $\upsilon$ above $\chi(c)$ has type $I_j$ or $I_j^*$. 
For every $x \in X$ we have $\sum_{c\in \chi^{-1}(x)} e_c(\chi) = \deg \chi$. Summing over all $x\in X$ for which the fiber $\mathbb{E}_x$ has type $I_j$, we find $\sum_{\ell\geq 1} \ell s_{j,\ell} = (\deg \chi) \cdot r_j$ for all $j\geq 1$. 
The same argument applied to $\tau$ yields 
$$
Md = \sum_{j,{\ell}\geq 1} j{\ell}s_{j,{\ell}} = (\deg \chi)\cdot \sum_{j\geq 1} jr_j.
$$

It follows that the first term of the right-hand side of \eqref{degR} equals 
\begin{equation}\label{fractiontobebounded}
\sum_{j,{\ell} \geq 1} (j{\ell}-1)s_{j,{\ell}} \geq \sum_{j,{\ell}\geq 1} (j-1){\ell}s_{j,{\ell}} = (\deg \chi)\cdot \sum_{j\geq 1} (j-1)r_j = 
                              \frac{\sum_{j\geq 1} (j-1)r_j}{\sum_{j\geq 1} jr_j}\cdot Md.
\end{equation}
We consider two subcases. 

A) The characteristic of $k$ is not equal to $n$. 
From $g(X) \leq g(C) \leq 1$ we conclude $n\leq 12$ or $n=14$ or $n=15$ (see \cite[p.\ 109]{oggone}), and since $n\geq 3$ is prime, we have $n \in \{3,5,7,11\}$. From Table \ref{singfibsX1} above, we find that the fraction in the right-most expression of \eqref{fractiontobebounded} is at least $\frac{1}{2}$.
Since $\upsilon = \upsilon(n)$ has only fibers of type $IV^*$, $I_1$, and $I_n$, Lemma \ref{changefibers} implies that $\pi'$ does not have fibers of type $II$, $II^*$, or $I_0^*$. Again from Lemma \ref{changefibers}, this time viewing $\pi'$ as the minimal model of the base change of $\pi$ by $\tau$, we find that for every $c \in C$ for which the fiber of $\pi$ above $\tau(c)$ has type $II$, the ramification index $e_{c}(\tau)$ is even, so we have $e_{c}(\tau)-1 \geq \frac{1}{2}e_{c}(\tau)$. Therefore, the second term of the right-hand side of \eqref{degR} is at least $\frac{1}{2}Nd$, so the degree of $R$ is at least $\frac{1}{2}Md + \frac{1}{2}Nd \geq \frac{1}{4}d(M + 2N) = 3d$. The Riemann-Hurwitz formula applied to $\tau$ then yields $2g-2 = -2d + \deg R \geq d >0$, so $g>1$. 

B) The characteristic of $k$ is equal to $n$. From $g(X)\leq g(C) \leq 1$ we conclude $n \in \{5,7,11,13\}$ (for a formula for the genus of $X$, see \cite[p.\ 96 and 99]{igusa}).  
From Table \ref{singfibsX1} above, we find that the fraction in the right-most expression of \eqref{fractiontobebounded} is at least $\frac{4}{5}$. Also, from the fact that $\pi$ is not isotrivial, we get $M\geq 4$ by Lemma \ref{infoMN}, so the degree of $R$ is at least $\frac{4}{5}Md >3d$. As before, the Riemann-Hurwitz formula yields $g > 1$. 
\end{proof}

\section{Proof of the main theorems}\label{proofs}

In this section, the field $k$ is still of characteristic different from $2$ and $3$. 

\begin{theorem}\label{main}
Suppose $k$ is infinite. 
Let $S\subset \P$ be a del Pezzo surface of degree $1$ over $k$, given by \eqref{maineq} for some homogeneous $f,g \in k[z,w]$ of 
degree $4$ and $6$, respectively. Let $Q = (x_0:y_0:0:1) \in S(k)$ be a rational point with $y_0 \neq 0$. 
Suppose that the following statements hold.
\begin{itemize}
\item If the order of $Q$ in $S_0^\ns(k)$ is at least $4$, then $\C_Q(5)$ has infinitely many $k$-points. 
\item If the characteristic of $k$ equals $5$, then the order of $Q$ in $S_0^\ns(k)$ is not $5$.
\item The pair $(S,Q)$ is not isomorphic to a pair described in Examples \ref{ex:hell5Q}, \ref{ex:order3}(iii), or \ref{ex:order3sec}(iii).
\item If the pair $(S,Q)$ is isomorphic to a pair described in Subexamples \ref{ex:order3}(i) or \ref{ex:order3sec}(i), then the set of $k$-points on $\C_Q(5)$ is Zariski dense in $\C_Q(5)$.
\end{itemize}
Then the set $S(k)$ of $k$-points on $S$ is Zariski dense in $S$.  
\end{theorem}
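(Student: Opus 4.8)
The plan is to manufacture, out of the curve $\C_Q(5)$, a multisection of $\pi$ of infinite order and then to convert this into a Zariski-dense set of $k$-points on $S$. First I would extract a horizontal curve. By Corollary \ref{alwayshorizontal}, the hypotheses that $(S,Q)$ is not isomorphic to a pair in Examples \ref{ex:hell5Q}, \ref{ex:order3}(iii), or \ref{ex:order3sec}(iii), together with the requirement that the characteristic is not $5$ when $Q$ has order $5$, guarantee that $\sigma \colon \C_Q(5) \dashrightarrow S$ sends at least one component $\C_0$ of $\C_Q(5)$ to a horizontal curve $D_0$ on $S$. Each component of $\C_Q(5)$ has geometric genus at most $1$, since the defining equation \eqref{CQ5} exhibits $\C_Q(5)$ as a double cover of the $p$-line; as $\sigma$ restricts to a dominant rational map $\C_0 \dashrightarrow D_0$, the smooth projective model $C$ of $D_0$ has genus at most $1$ by Riemann--Hurwitz applied to the induced morphism of smooth models. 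Viewing $D_0$ as the image on $S$ of its strict transform, a horizontal curve on $\E$, and setting $\tau = \pi|_C \colon C \to \P^1$, which is nonconstant, the normalization map $C \to \E$ over $\P^1$ determines, by the universal property of the fiber product, a section $\Sigma$ of the base change $\E_C = \E \times_{\P^1} C \to C$. Since $D_0$ is horizontal it is not the zero section $\O$, so $\Sigma$ is nonzero.

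Next I would apply Theorem \ref{ellfibnotors} to the curve $C$ of genus at most $1$ and the nonconstant map $\tau$: the fibration $\E_C \to C$ then has no nonzero torsion section, so $\Sigma$ has infinite order in its Mordell--Weil group (cf.\ \cite{bogtsch}). For the density step I would consider the sections $n\Sigma$ for $n \geq 1$; these are pairwise distinct and defined over $k$. Each pushes forward under $\E_C \to \E \to S$ to a horizontal curve $D_n$ on $S$, a multisection of $\pi$ of degree at most $\deg \tau$. Because a fixed multisection $D$ pulls back to $D \times_{\P^1} C \to C$, which admits at most $\deg(D/\P^1) \leq \deg \tau$ sections, only boundedly many $n\Sigma$ can map onto a given curve; as there are infinitely many $n\Sigma$, the $D_n$ are infinitely many distinct curves on $S$. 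Infinitely many distinct curves cannot lie in a proper closed subset of the surface $S$, so $\bigcup_{n\geq 1} D_n$ is Zariski dense; and once each $D_n$ carries infinitely many $k$-points, each $D_n(k)$ is dense in $D_n$, whence $\bigcup_{n\geq 1} D_n(k)$ is dense in $S$, proving the theorem. It therefore remains to guarantee that $C(k)$ --- equivalently $D_0(k)$, equivalently $\C_0(k)$ --- is infinite, since $n\Sigma$ is defined over $k$ and generically finite onto $D_n$, so it maps $C(k)$ to an infinite subset of $D_n(k)$.

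The crux, and the place where the remaining two hypotheses enter, is exactly this infiniteness analysis of $\C_0(k)$, organized by the order of $Q$ in $S_0^\ns(k)$. When the order is at least $4$ we have $c_1 = \phi_2^2\phi_3 \neq 0$, so $\C_Q(5)$ is a genuine degree-two cover of $\A^1(p)$ and every $k$-component is the graph of a rational function of $p$ over its field of definition; hence the horizontal $k$-component is either a proper component, necessarily the graph of a $k$-rational function of $p$ and so carrying a dense set of $k$-points, or equals $\C_Q(5)$ itself, in which case the first hypothesis supplies infinitely many $k$-points. When the order is $3$ we have $c_1 = 0$ and, by Proposition \ref{nohell3Q}, a unique component $\C_1$ projecting birationally to $\A^1(p)$; if $(S,Q)$ is not one of the excluded pairs, then $\C_1$ is horizontal and rational, so $\C_1(k)$ is infinite, whereas in the sub-cases isomorphic to Examples \ref{ex:order3}(i) or \ref{ex:order3sec}(i) the horizontal component is the other one (a conic of the shape $p^2 = \mathrm{const}$, possibly without $k$-points), and the fourth hypothesis --- Zariski density of $\C_Q(5)(k)$ --- forces that component to have infinitely many $k$-points.

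I expect the bookkeeping across these order cases to be the main obstacle: one must verify throughout that the $k$-points actually land on the horizontal component and not on a component contracted by $\sigma$ to $Q$ (Lemma \ref{Qabove0}), and one must correctly match Examples \ref{ex:order3} and \ref{ex:order3sec}, and in particular distinguish their sub-examples, to the last two hypotheses, using Corollary \ref{cor310} to track where the points of $\Omega$ go. By contrast, the passage from the infinite-order section $\Sigma$ to Zariski density of $S(k)$ is essentially formal once $C(k)$ is known to be infinite, and the structural inputs --- existence of a horizontal component (Corollary \ref{alwayshorizontal}) and absence of torsion in the base change (Theorem \ref{ellfibnotors}) --- are already in hand.
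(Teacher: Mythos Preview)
Your approach is essentially the paper's: locate a horizontal component $\C_0$ of $\C_Q(5)$ with infinitely many $k$-points, pass to its normalization, apply Theorem~\ref{ellfibnotors} to see the resulting section has infinite order, and convert this into Zariski density of $S(k)$. The paper argues density on the base-changed surface $\tilde{\C}_0 \times_{\P^1} \E$ and then pushes down to $S$, while you track the images $D_n$ of the multiples $n\Sigma$ directly on $S$; both arguments work.

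There is one gap in your order-$3$ bookkeeping. By Proposition~\ref{nohell3Q}, the component $\C_1$ projecting birationally to $\A^1(p)$ is horizontal exactly when $(S,Q)$ is \emph{not} isomorphic to a pair in Examples~\ref{ex:order3} or~\ref{ex:order3sec} (any subexample). You cover Subexamples~(i) via the fourth hypothesis, and Subexamples~(iii) are excluded by the third hypothesis, but you omit Subexamples~(ii). There too $\C_1$ is vertical, and the horizontal component is the one given by $p=0$; since this is a copy of $\A^1(q)$, it automatically carries infinitely many $k$-points and no extra hypothesis is needed---but this case must be stated. A minor related slip: your assertion that \eqref{CQ5} exhibits $\C_Q(5)$ as a double cover of the $p$-line requires $c_1\neq 0$, i.e.\ order at least~$4$; when the order is $3$ the equation is linear in $q$, though the genus bound of course still holds (all components are then rational).
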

\begin{proof}
Given $S$ and $Q$, we let the curve $\C_Q(5)$, its completion $\pC_Q(5)$, the rational map $\sigma \colon \pC_Q(5) \dashrightarrow S$, the elliptic fibration $\pi\colon \E \to \P^1$, and the element $\phi_3\in k$ be as in Sections \ref{multisection} and \ref{S:completion}.
We claim that there exists an irreducible component $\C_0$ of $\pC_Q(5)$ for which $\sigma(\C_0)$ is a horizontal curve on $S$ and $\C_0(k)$ is infinite. Indeed, if the order of $Q$ in $S_0^\ns(k)$ is at least $4$, then $\phi_3\neq 0$, so $\C_Q(5)$ 
is a double cover of $\A^1(p)$, the curve $\C_Q(5)$ has at most two irreducible components, and if there are two, then there is an involution that switches them, so the first assumption of the theorem implies that $\C_Q(5)(k)$ is Zariski dense in $\C_Q(5)$; thus, there exists an irreducible component $\C_0$ of $\C_Q(5)$ that satisfies the claim by Corollary \ref{alwayshorizontal}.
Suppose, for the remainder of this paragraph and the proof of the claim, that the order of $Q$ is $3$. Then for any pair $(S,Q)$ that is not isomorphic to one of the pairs described in Examples \ref{ex:order3} and \ref{ex:order3sec}, the unique component of $\C_Q(5)$ that projects birationally to $\A^1(p)$ satisfies the claim by Proposition \ref{nohell3Q}. For any pair $(S,Q)$ that {\em is} isomorphic to one of the pairs described in those examples, 
the curve $\C_Q(5)$ contains a component $\C_0$ whose projection to $\A^1(p)$ is constant;
this component $\C_0$ satisfies the claim, as its image is horizontal by Proposition \ref{nohell3Q} (by assumption we are not in the Subexample \ref{ex:order3}(iii) or \ref{ex:order3sec}(iii)) and density of $\C_0(k)$ follows either automatically in the case of Subexample~(ii) or by assumption in the case of Subexample~(i). 

Let $\C_0$ be a component of $\pC_Q(5)$ as in the claim, and let $\tilde{\C}_0$ be a normalization of $\C_0$. Then the rational map $\sigma \colon \pC_Q(5) \dashrightarrow S$ induces a morphism $\tilde{\sigma} \colon \tilde{\C}_0 \to \E$. The composition $\pi \circ \tilde{\sigma} \colon \tilde{\C}_0 \to \P^1$ corresponds on an open subset to the rational map $\varphi \circ \sigma \colon \pC_Q(5) \dashrightarrow \P^1$, so it is surjective by the claim. 
Let $\theta$ denote the section $\id \times \tilde{\sigma} \colon \tilde{\C}_0 \to \tilde{\C}_0 \times_{\P^1} \E$ of the elliptic surface $ \tilde{\C}_0 \times_{\P^1} \E \to \tilde{\C}_0$. 
$$
\xymatrix{
\tilde{\C}_0 \times_{\P^1} \E \ar[dd] \ar[rr] && \E \ar[dd]^{\pi} \ar[dr] \\
&&&S \ar@{-->}[dl]^{\varphi} \\
\tilde{\C}_0 \ar[rr]_{\pi \circ \tilde{\sigma}} 
\ar@/^5.5mm/[uu]^{\theta}
\ar[rruu]^{\tilde{\sigma}} && \P^1
}
$$
The section $\theta$ is not the zero section because $\sigma \colon \pC_Q(5) \dashrightarrow S$ sends points $P \in \C_Q(5)$ whose associated curve $C$ is not contained in $S$ to a point unequal to $\O \in S$. 
The genus of $\tilde{\C}_0$ is at most $1$, so by Theorem \ref{ellfibnotors}, the section $\theta$ has infinite order. 
Since $\tilde{\C}_0(k)$ is Zariski dense in $\tilde{\C}_0$, it follows that the rational points are dense on the images of all infinitely many multiples of $\theta$. 
Thus, the $k$-rational points are dense in the surface $\tilde{\C}_0 \times_{\P^1} \E$ and as this surface maps dominantly to $S$, we conclude that $S(k)$ is Zariski dense in $S$. 
\end{proof}

Obviously, for any point $Q' \in S(k) - \{\O\}$, we may apply an automorphism of $\P^1(z,w)$ to ensure that we have $\varphi(Q') = (0:1)$, so the implicit assumption in Theorem \ref{main} and related statements that $\varphi(Q)=(0:1)$ is not a restriction.    

Note that if the order of $Q$ in $S_0^\ns(k)$ is $3$ and the pair $(S,Q)$ is not isomorphic to a pair described in 
Examples \ref{ex:order3}(i) and \ref{ex:order3sec}(i), then the hypotheses of Theorem \ref{main} are automatically satisfied, without further assumptions on $\C_Q(5)$. 

\begin{example}\label{smallsample}
We took a small sample of approximately a hundred randomly chosen del Pezzo surfaces over $\Q$ given by \eqref{maineq} with $f$ and $g$ having only coefficients $0$, $1$, and $-1$. For nearly half of the cases, a short point search revealed a rational point $Q$ for which we could show that it satisfies all conditions of Theorem \ref{main}, thus proving the rational points are Zariski dense. For the remaining cases, we could still find points $Q$, but
the coefficients of $\C_Q(5)$ were too large to show that $\C_Q(5)$ has infinitely many rational points.  
\end{example}

\begin{example}\label{ex:tony}
As mentioned in Section \ref{intro}, A.~V\'arilly-Alvarado proves in \cite[Theorem 2.1]{varilly}
that if we have $k=\Q$ and $f=0$, and some technical conditions on $g$, as well as a finiteness conjecture hold, 
then the set of rational points is Zariski dense on the surface given by \eqref{maineq}. 
He also mentions the surface $S$ with $f=0$ and $g=243z^6+16w^6$ as an example that would not succumb to his 
methods, so we took $S$ as a test example for our method.
Unfortunately, the point $(0:4:0:1)$ of order $3$ on $S_0\subset S$ lies on nine $(-1)$-curves (cf.~Example \ref{ex:order3sec}(iii)). It is not hard to find more rational points on this surface, but we did not succeeded in finding any points on the curve $\C_Q(5)$ associated to any of these points $Q$ as the coefficients are rather large: for the second-smallest point $Q = (-63:14:1:5)$, the conductor of the Jacobian of $\C_Q(5)$ has $62$ digits. N.~Elkies did prove that the points on $S$ are dense with a different method \cite{elkies}. 
\end{example}

\begin{proof}[Proof of Theorem \ref{maincor}]
The fact that $Q$ is not fixed by the automorphism that changes the sign of $y$ implies $Q \neq \O$. 
Without loss of generality, we assume $\varphi(Q) = (0:1)$, say $Q = (x_0:y_0:0:1)$, with $y_0 \neq 0$. 
Hence, we may apply Theorem \ref{main}.  
The last hypothesis of Theorem \ref{maincor} implies the last two of Theorem \ref{main}, which shows that $S(k)$
is indeed Zariski dense in $S$. 
\end{proof}

\begin{proof}[Proof of Theorem \ref{denseinmodulispace}]
Note that any point $(x_0,y_0)$ on an elliptic curve given by $y^2 = x^3+ax+b$ has order $3$ if and only if 
$(a+3x_0^2)^2 = 12x_0y_0^2$.
Define the polynomials $f = \sum_{i=0}^4 f_iu^i$ and $g = \sum_{j=0}^6 g_ju^j$. 
Suppose we have $\ell \in \{0,\ldots,4\}$, $m \in \{0,\ldots, 6\}$, and $\varepsilon >0$. 
Since every elliptic curve over the real numbers $\R$ has a nontrivial $3$-torsion point, we may choose a nonzero rational number $t \in \Q^*$ and a point $Q = (x_0:y_0:t:1) \in S(\R)$ such that 
the fiber $S_t$ given by $y^2 = x^3 + f(t)x+g(t)$ is smooth, the point $Q$ has order $3$ in $S_t(\R)$, and $Q$ does not lie on six $(-1)$-curves on $S$. 
Set $\xi_0 = \frac{1}{6}y_0^{-1}(f(t)+3x_0^2)$, so that $Q$ being $3$-torsion implies $3\xi_0^2 = x_0$. 
Choose $\xi_1,y_1 \in \Q^*$ close to $\xi_0$ and $y_0$, respectively, and set $x_1 = 3\xi_1^2$ and $Q' = (x_1:y_1:t:1)$. 
Also set 
\begin{align*}
\lambda &= f_\ell + t^{-\ell}(6\xi_1 y_1-3x_1^2-f(t)), \\
\mu &= g_m + t^{-m}(y_1^2-x_1^3-(6\xi_1 y_1-3x_1^2)x_1-g(t)),\\
f'  &= f - f_\ell u^\ell + \lambda u^\ell,\\
g'  &= g - g_m u^m + \mu u^m,
\end{align*}
so that $f'$ and $g'$ are the polynomials obtained from $f$ and $g$ after replacing $f_\ell$ and $g_m$ by $\lambda$ and $\mu$, respectively. Then we have 
$f'(t) = 6\xi_1 y_1-3x_1^2$ and $g'(t) = y_1^2-x_1^3-f'(t)x_1$, so $Q$ lies on the surface $S'$ given by \eqref{tobechanged} with the two values $f_\ell$ and $g_m$ replaced by $\lambda$ and $\mu$, respectively. If we choose $\xi_1$ and $y_1$ arbitrarily close to $\xi_0$ and $y_0$, then $\lambda$ and $\mu$ will be arbitrarily close to $f_\ell$ and $g_m$. By choosing them close enough, we also guarantee that $S'$ and $S_t'$ are smooth, and that $Q'$ does not lie on six $(-1)$-curves on $S'$. From the identity 
$(f'(t) + 3x_1^2)^2 = 36\xi^2 y_1^2 = 12x_1y_1^2$ we conclude that $Q'$ has order $3$ in $S'_t(\Q)$, so we may apply 
Theorem \ref{maincor}, which yields that $S'(\Q)$ is Zariski dense in $S'$.
\end{proof}

\begin{lemma}\label{specinforder}
Let $k$ be an infinite field and $X \to \P^1$ an elliptic fibration over $k$ with a nontorsion section. Then there are infinitely many points $t \in \P^1(k)$ for which the fiber $X_t$ contains infinitely many $k$-rational points.
\end{lemma}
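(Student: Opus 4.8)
The plan is to reduce the statement to a question about the order of the given section. Write $\pi\colon X \to \P^1$ for the fibration and let $O$ and $P$ denote the zero section and the nontorsion section. For any $t \in \P^1(k)$ lying under a smooth fiber, all multiples $nP_t$ (for $n \in \Z$) lie in $X_t(k)$, and these are pairwise distinct precisely when $P_t$ has infinite order; so it suffices to exhibit infinitely many $t \in \P^1(k)$ for which $P_t$ is nontorsion. The basic finiteness input is that for each fixed $N \geq 1$ the set $\{t \in \P^1 : NP_t = O_t\}$ is the intersection of the two distinct sections $NP$ and $O$, hence finite, because $P$ nontorsion gives $NP \neq O$. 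Consequently $\{t \in \P^1(k) : P_t \text{ is torsion}\} = \bigcup_{N \geq 1}\{t : NP_t = O_t\}$ is a countable union of finite sets.

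I would then dispose of two cases immediately. If $k$ is uncountable, the torsion locus above is countable while $\P^1(k)$ is uncountable, so the nontorsion locus is (uncountably) infinite and we are done, using the section $P$ but no deeper input. If instead $k$ is an infinite algebraic extension of a finite field, then the section plays no role at all: a smooth fiber $X_t$ with $t \in \P^1(k)$ is defined over a finite subfield $\F_q \subseteq k$, and since $k$ is an infinite algebraic extension of $\F_p$ it contains $\F_{q^m}$ for infinitely many $m$; the Hasse bound gives $|X_t(\F_{q^m})| \to \infty$, so $X_t(k)$ is already infinite, and all but finitely many $t$ work.

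The essential case is thus a countable field $k$ that is not algebraic over a finite field. Here $X$, $O$, and $P$ are defined over a subfield $k_0 \subseteq k$ that is finitely generated over the prime field and infinite: in characteristic $0$ one has $\Q \subseteq k_0$ automatically, and in positive characteristic one adjoins a transcendental element of $k$ to the finitely many coefficients defining the data. Since the condition $NP = O$ is defined over $k_0$, the section $P$ remains nontorsion over $k_0$. I would then invoke a specialization theorem over $k_0$ asserting that $\{t \in \P^1(k_0) : P_t \text{ is torsion}\}$ is finite; as $k_0$ is infinite this yields infinitely many $t \in \P^1(k_0) \subseteq \P^1(k)$ with $P_t$ nontorsion, and for each such $t$ the group $X_t(k) \supseteq X_t(k_0)$ is infinite. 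When $k_0$ is a global field --- a number field, or a function field $\F_q(C)$ of a curve --- this is exactly Silverman's specialization theorem, respectively its function-field analogue.

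The main obstacle is this specialization step when $k_0$ is not global, i.e.\ when its transcendence degree is at least $1$ over $\Q$ or at least $2$ over $\F_p$. I would handle it in one of two ways: either cite the extension of Silverman's specialization theorem to finitely generated fields provided by Moriwaki's height theory, or, to remain self-contained, reduce to the global case by specializing the transcendental parameters one at a time. Each specialization keeps $P$ nontorsion away from a proper closed subset of the parameter --- controlled again by the finiteness of the loci $\{NP_t = O_t\}$ together with a N\'eron-type irreducibility argument --- and after descending all but one parameter one lands on a global field where the classical theorem applies. Arranging these successive specializations so that nontorsionness is preserved at every stage is the delicate point, and the part of the argument demanding the most care.
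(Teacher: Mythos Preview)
Your overall strategy is sound and your case analysis is correct, but you are working harder than necessary in the main case, and the paper's proof shows why.

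Both you and the paper treat the case of $k$ algebraic over a finite field the same way (Hasse/Weil on each smooth fiber), and both then reduce to an infinite subfield $k_0$ that is finitely generated over its prime field. The divergence is in what you do next. You reach for Silverman's specialization theorem, which in its usual form is stated over global fields; this forces you into an awkward dichotomy where global $k_0$ is fine but higher transcendence degree requires either Moriwaki's arithmetic heights or an inductive descent on transcendence degree that you yourself flag as ``the delicate point''. The paper instead cites N\'eron's specialization theorem in its original Hilbertian form: for an abelian variety over $k_0(t)$, the set of $t\in k_0$ at which the specialization map on Mordell--Weil groups fails to be injective is thin, and every finitely generated infinite field is Hilbertian (it is either a number field or a transcendental extension of its prime field, and both are classically Hilbertian). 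Thinness then gives infinitely many good specializations immediately, with no case split on transcendence degree and no Moriwaki machinery.

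So your proof would go through---the parameter-by-parameter reduction you sketch is essentially how N\'eron's theorem is proved---but recognizing that the specialization result you need already exists at the level of Hilbertian fields collapses your hardest case to a citation. Your uncountable-field shortcut is a pleasant observation but becomes redundant once the Hilbertian argument is in place.
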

\begin{proof}
If $k$ is algebraic over a finite field, then this follows from the Weil conjectures. Otherwise, we replace $k$ without loss of generality by an infinite subfield that is finitely generated over its prime subfield, over which everything is defined. Then $k$ is either a number field or a transcendental extension of its prime field and in all cases, $k$ is Hilbertian (see \cite{hilbert} for number fields and \cite[Theorem 13.4.2]{friedjarden} for a modern treatment of the general case). 
The lemma now follows immediately from N\'eron's Specialization Theorem \cite[Th\'eor\`eme IV.6]{neron} (see also \cite[Theorem 7.2]{lang} and \cite[Remark 3.7(1)]{petersen}).
\end{proof}

\begin{proof}[Proof of Theorem \ref{densewhennodal}]
Without loss of generality, we assume that the nodal fiber lies above $(0:1)$. When $Q$ runs over the nodal curve $S_0$, the curves $\pC_Q(5)$ form a family of genus-one curves. More precisely, the equation in \eqref{completion} describes a surface $X \subset \A^1(x_0) \times \P(1,2,1)$, and if $Q = (x_0:y_0:0:1)$ is a point on $S_0$ with $y_0 \neq 0$ and not of order $3$ in $S_0^\ns(k)$, then the fiber of the projection $\mu \colon X \to \A^1$ above $x_0$ is isomorphic to $\pC_Q(5)$. The fibered product $(S_0-\{\O\}) \times_{\A^1} X$ is the family of curves $\pC_Q(5)$, at least outside finitely many points $Q \in S_0$. 
Let $d \in k^*$ be such that $f_0 = -3d^2$ and $g_0 = 2d^3$. 
Note that we have two rational maps $\chi_i \colon \A^1 \dashrightarrow X$, for $i=1,2$, that are rational 
sections of $\mu$, namely given by $x_0 \mapsto (x_0, (1:\alpha_i:0))$ with 
$\alpha_1=\frac{1}{4}(x_0+2d)^{-1}$ and $\alpha_2 = \frac{1}{4}(x_0+7d)(x_0+2d)^{-1}(x_0+3d)^{-1}$ as in Lemma \ref{L:E0node}.
These maps extend to morphisms and we choose $\chi_1$ to be the zero section, making $\mu$ a Jacobian elliptic fibration.

We claim that the section $\chi_2$ has infinite order. 
The model $X \to \A^1$ is highly singular, so instead we consider the surface $X' \subset \A^1(x_0) \times \P(1,2,1)$ that is the image of $X$ under the birational map 
\begin{align*}
\A^1 \times \P(1,2,1) &\dashrightarrow \A^1 \times \P(1,2,1) \\
(x_0,(\ovp:\ovq:\ovr)) &\mapsto (x_0,(\ovp':\ovq':\ovr'))
\end{align*}
with 
\begin{align*}
\ovp' & = 8(x_0-d)^2 \ovp +(x_0-d)(f_1d+g_1)\ovr, \\
\ovq' & = 2\varphi_2^{-1}(2c_1\ovq + c_2\ovp^2 + c_3\ovp\ovr + c_4 \ovr^2),\\
\ovr' & = 8\ovr,
\end{align*}
where $\phi_2, c_1, c_2, c_3, c_4$ depend on $x_0$ (which is now variable instead of fixed) as they did before. 
Note that $f_1d+g_1$ is nonzero because $S$ is smooth. For $x_0 \not \in \{d,-2d,-3d\}$, the fibers of $\mu \colon X\to \A^1$ and $\mu'\colon X'\to \A^1$ are isomorphic. 
The model $X'$ is given by $\ovq^2 = H(\ovp,\ovr)$, where $H\in k[x_0][\ovp,\ovr]$ is homogeneous of degree $4$. 
The fiber $X'_d$ of $\mu'$ above $x_0 = d$ is given by 
\begin{equation}\label{fiberatd}
\ovq^2 = 81 d^4 (f_1d+g_1) \ovp^2\ovr(\ovp + (f_1d+g_1)\ovr).
\end{equation}
This fiber $X'_d$ is singular at the point $(d,(0:0:1))$, and in fact so is $X'$, but the fiber is smooth everywhere else. 
The sections $\chi_1$ and $\chi_2$ correspond to the sections $\chi_1' \colon x_0 \mapsto (x_0,(4: 6d(d-x_0) : 0 ))$ 
and $\chi_2' \colon x_0 \mapsto (x_0,(4: 6d(x_0-d) : 0 ))$ of $\mu'$, respectively.
These sections intersect in the point $(d,(1:0:0))$, which is smooth in its fiber. 
Therefore, in a minimal nonsingular projective model $\overline{\mu}\colon \overline{X}\to \P^1$ 
of the fibration $\mu$, the two sections intersect as well. 
Hence, $\chi'_2$ is in the kernel of the reduction $\overline{X}(\P^1) \to \overline{X}_d(k)$, where $\overline{X}_d$
is the fiber of $\overline{\mu}$ above $x_0=d$. 
This kernel is isomorphic to a subgroup of the formal group associated to $\overline{\mu}$ (or $\mu'$) 
and the completion of $k[x_0]$ at the maximal ideal $(x_0-d)$, cf.~\cite[Proposition VII.2.2]{silverman}. 
By \cite[Proposition IV.3.2(b)]{silverman}, all torsion elements of the formal group 
have $p$-power order, where $p$ is the characteristic of $k$. 
This proves the claim for $p=0$ (cf.~\cite[Theorem 1.1(a)]{MP}). 
We now assume $p>0$ and determine the Kodaira type of the singular 
fiber $\overline{X}_d$ of $\overline{\mu}$. One checks that the discriminant of $H$ equals  
$$
\Delta = (x_0-d)^3(x_0+2d)^8(x_0+3d)^2D(x_0),
$$
where $D$ is a polynomial of degree $35$ satisfying $2^{11}D(d) = -3^{13}d^{11}(f_1d+g_1)^{12}\neq 0$. 
Hence, the valuation of $\Delta$ at $x_0=d$ equals $3$; the fiber $X'_d$ described in \eqref{fiberatd} is nodal, so the reduction is multiplicative and we conclude from \cite[Tate's Algorithm IV.9.4]{silvermanadvanced} that $\overline{X}_d$ has type $I_3$.
It follows that the $j$-invariant of $\mu$ is not constant.
Suppose that $\chi'_2$ is torsion. Then $\overline{\mu}$ admits a section of order $p$, so there is a surjective morphism 
$\psi \colon \P^1 \to \overline{\Ig(p)^{\ord}}$ such that the generic fiber of $\overline{\mu}$ is isomorphic to the generic fiber of the base change of the fibration $.(p) \colon \mathfrak{E}(p)^{(p)} \to \overline{\Ig(p)^{\ord}}$ by $\psi$ (cf.~part III of the proof of Theorem \ref{ellfibnotors}).
This implies that the minimal nonsingular model of this base change is isomorphic to $\overline{\mu}$.
However, the existence of $\psi$ implies that $\overline{\Ig(p)^{\ord}}$ has genus $0$, so $p \leq 11$, and from Lemma \ref{changefibers} and Table \ref{singfibsIg} we find that no base change of $\omega(p)$ has a minimal nonsingular model with fibers of type $I_3$. This contradicts the fact that $\overline{X}_d$ has type $I_3$ and the claim follows.

It follows that the section $(S_0-\{\O\}) \dashrightarrow (S_0-\{\O\}) \times_{\A^1} X$ induced by $\chi_2$ also has 
infinite order on the elliptic fibration $(S_0-\{\O\}) \times_{\A^1} X \to S_0-\{\O\}$ with the section induced by $\chi_1$ as zero section. After replacing $S_0$ by its normalization we may apply Lemma \ref{specinforder}, which implies that the curve $\pC_Q(5)$ has infinitely many rational points for infinitely many $Q \in S_0^\ns(k)$, in particular for some $Q$ of order larger than $5$ in $S_0^\ns(k)$. Theorem \ref{maincor} then shows that $S(k)$ is Zariski dense in $S$.
\end{proof}

\end{document}